\newtheorem{theorem}{Theorem}[section]
\newtheorem{proposition}[theorem]{Proposition}
\newtheorem{lemma}[theorem]{Lemma}
\newtheorem{corollary}[theorem]{Corollary}
\theoremstyle{definition}
\theoremstyle{remark}
\newtheorem{remark}[theorem]{Remark}
\newtheorem{example}[theorem]{Example}
\numberwithin{equation}{section}
\newcommand{\Exp}{\operatorname{E}}
\newcommand{\Cov}{\operatorname{Cov}}
\newcommand{\Var}{\operatorname{Var}}
\renewcommand{\mathbf}{\boldsymbol}
\newcommand{\eps}{\varepsilon}
\newcommand{\N}{\mathbb N}
\newcommand{\R}{\mathbb R}
\newcommand{\Z}{\mathbb Z}
\newcommand{\scs}{\scriptscriptstyle}
\newcommand{\dto}{\rightsquigarrow}
\newcommand{\Sb}{\mathbb S}
\newcommand{\Zb}{\mathbb Z}
\newcommand{\ip}[1]{\lfloor#1\rfloor}
\newcommand{\Prob}{{\operatorname{P}}}
\newcommand{\Fb}{\mathbb F}
\newcommand{\I}{\mathbf{1}}
\newcommand{\wto}{\rightsquigarrow}
\newcommand{\pto}{\stackrel{\mathbb{P}}{\longrightarrow}}
\newcommand{\Nor}{\mathcal{N}}
\newcommand{\op}{o_\Prob(1)}
\newcommand{\C}{\mathbb{C}_n}
\newcommand{\Cxi}{\mathbb{C}_{n,\xi}}
\newcommand{\bs}{{q}}
\newcommand\blfootnote[1]{%
  \begingroup
  \renewcommand\thefootnote{}\footnote{#1}%
  \addtocounter{footnote}{-1}%
  \endgroup
}
\begin{document}
\title{Statistics for Heteroscedastic Time Series Extremes}
\author{Axel B\"ucher, Tobias Jennessen\\}

\date{\today}

\begin{abstract} 
Einmahl, de Haan and Zhou (2016, Journal of the Royal Statistical Society: Series B, 78(1), 31– 51) recently introduced a stochastic model that allows for heteroscedasticity of extremes. The model is extended to the situation where the observations are serially dependent, which is crucial for many practical applications. We prove a local limit theorem for a kernel estimator for the scedasis function, and a functional limit theorem for an estimator for  the integrated scedasis function. We further prove consistency of a bootstrap scheme that allows to test for the null hypothesis that the extremes are homoscedastic. Finally, we propose an estimator for the extremal index governing the dynamics of the extremes and prove its consistency. All results are illustrated by Monte Carlo simulations. An important intermediate result concerns the sequential tail empirical process under serial dependence.
\bigskip

\noindent
\emph{Key words.} 
Extremal Index; 
Kernel Estimator; 
Multiplier Bootstrap;
Non-Stationary Extremes;
Regular Varying Time Series.
\end{abstract}

\maketitle

\tableofcontents


\section{Introduction} \label{sec:intro}

Classical extreme value statistics is concerned with analyzing the extremal behavior of a series of independent and identically distributed (i.i.d.) random variables. However, in many practical applications, the latter assumption is not justifiable, since the data typically consist of observations collected on one or more variables over time. The observations may then both exhibit serial dependence and they may be drawn from a distribution that changes smoothly  (or even abruptly) as time progresses. The latter is particularly the case in many applications from environmental statistics (e.g., due to climate change), while the former is also omnipresent in typical applications from finance. 

While an abundance of methods has been proposed for tackling the resulting challenges concerning the bulk of the data (see, e.g., \citealp{BroDav91} for a classical account on time series analysis; \citealp{Dah12} for an overview on locally stationary processes that allow for nonparametric smooth changes over time; or \citealp{AueHor13} for an overview on results for change point analysis involving abrupt changes), respective results concerning extreme value analysis are much less developed, in particular for the situation exhibiting both serial dependence and non-stationarity. 

Theoretical results on extreme value analysis for stationary time series build on corresponding probabilistic theory summarized in \cite{LeaLinRoo83}, see also Chapter~10 in \cite{BeiGoeSegTeu04} for an overview or \cite{KulSou20} for a modern account in the heavy tailed case. Respective asymptotic results on a large class of estimators for the tail index can be found in \cite{Dre00}, with some substantial extensions on important intermediate results in \cite{DreRoo10}. Results regarding the time series dynamics for the heavy tailed case can be found in \cite{KulSou20} and the references therein. Smooth non-stationarity has often been approached by parametric regression models, see, e.g., \cite{DavSmi90,Col01}, where, however, no asymptotic theory is provided. Nonparametric approaches that were supported by asymptotic results can be found in \cite{HalTaj00_trend}; these authors also explicitly allow for serial dependence. \cite{DehTanNev15} consider a situation in which the smooth non-stationarity was formulated in a parametric way on the level of the domain of attraction condition rather than the limit situation. A nonparametric version of that model was investigated in \cite{EinDehZho16} (see below for details). Under the assumption of serial independence, these authors also provide asymptotic theory, which was recently extended in \cite{DehZho21_trends} to trends in the tail index and in \cite{Ein_etal22} to multivariate, spatial applications. Finally, change point tests for the tail index and the extremal dependence (i.e., abrupt changes in the tail behavior) can be found in \cite{KojNav17, BucKinKoj17, Hog17,Hog18}, with the latter two references  explicitly allowing for serially dependent observations.

The above literature review reveals a crucial gap which motivates the present paper: the models initiated by \cite{EinDehZho16} (subsequently referred to as EdHZ) have never been investigated under the assumption that the observations are serially dependent. Throughout the paper,  we therefore work under the following model adapted from EdHZ: for sample size $n$ and at time points $i\in\{1, \dots, n\}$, we observe possibly dependent random variables $X_1^{\scs (n)}, \ldots, X_n^{\scs (n)}$ with continuous cumulative distribution functions (c.d.f.s) $F_{n,1},\ldots,F_{n,n}$, i.e., $X_{i}^{\scs (n)}\sim F_{n,i}$.  We assume that all these distribution functions share a common right endpoint $x^*=\sup\{x \in \R:F_{n,i}(x)<1\}$, and that there exists some continuous reference c.d.f.\ $F$ with the same right endpoint $x^*$ that is strictly increasing  on its support and some positive function $c$ on $[0,1]$ such that
\begin{equation} \label{eq:common F}
\lim_{x\uparrow x^*}\frac{1-F_{n,i}(x)}{1-F(x)}=c\Big(\frac{i}{n}\Big).
\end{equation}
As in EdHZ, we refer to $c$ as the \textit{scedasis function}, which we additionally assume to be a bounded and continuous probability density function. The case where $c\equiv 1$ corresponds to \textit{homogeneous extremes}, while the opposite is referred to as  \textit{heteroscedastic extremes}. The integrated scedasis function shall be denoted by
\[ 
C(s) := \int_{0}^{s} c(x) \, \mathrm{d}x, \quad s \in [0,1]. 
\]
We allow for serial dependence in the following sense: for each $n \in \N$, the unobservable sample $U_1^{\scs (n)},\ldots,U_n^{\scs (n)}$ with $U_i^{\scs (n)} = F_{n,i}(X_i^{\scs (n)})$ is assumed to be an excerpt from a strictly stationary time series $(U_t^{\scs (n)})_{t \in \Z}$ whose distribution does not depend on $n$. The dynamics of the extremes of the latter series will later be captured by the concept of regular variation \citep{BasSeg09}, see  Condition \ref{cond:reg} below for details, and by the extremal index $\theta$ \citep{Lea83}, see Condition \ref{cond:ei}. 
Recall that the reciprocal of the extremal index may be interpreted as the mean cluster size of subsequent extreme observations. 

Our contributions within the above model are as follows: first, we provide a (pointwise) central limit theorem on the kernel estimator for the scedasis function that was studied in EdHZ for the independent case. Notably, the serial dependence will only show up in the asymptotic estimation variance. Second, we study an empirical version of the integrated scedasis function from EdHZ and provide a respective functional central limit theorem; again, the asymptotic covariance functional will be different from that in the serially independent case. The latter is a major nuisance for testing the null hypothesis of homoscedastic extremes, i.e., $H_0:c \equiv 1$, where standard approaches based on functionals of the law of the Brownian bridge as proposed in EdHZ do not work any more. As a circumvent, we develop a suitable multiplier bootstrap scheme and show its consistency; for this, we need to extent results from \cite{Dre15} and Section 12 in \cite{KulSou20} to the non-stationary case. The bootstrap scheme is then used to define a classical bootstrap test as well as a test based on self-normalization, the latter being computationally much more efficient but slightly less powerful. Finally, we also propose an estimator for the extremal index $\theta$ of the underlying stationary time series that governs the dynamics of the extremes and show its consistency. For that purpose, we use a suitable modification of the block-maxima estimator from \cite{Nor15,BerBuc18} to the current non-stationary setting.  On a theoretical level, a crucial tool for most of the afore-mentioned asymptotics is a functional central limit theorem for the sequential tail empirical process (STEP), which may also be of interest for other statistical problems not tackled in this paper. 

The remaining parts of this paper are organized as follows: the assumptions needed to prove the asymptotic results are summarized and discussed in Section \ref{sec:assumptions}, where we also introduce a location-scale model meeting these assumptions. Section~\ref{sec:scedasis} is concerned with the estimation of the scedasis function and the integrated scedasis function. Section~\ref{sec:testing} is about testing for the null hypothesis that the extremes are homoscedastic. The assessment of the serial dependence is dealt with in Section~\ref{sec:EI}, where we also extend the discussion on the location-scale model.  A functional central limit theorem for the sequential tail empirical process under serial dependence is presented in Section~\ref{sec:sstep}. The finite-sample behavior of the introduced methods is investigated in a Monte Carlo simulation study in Section~\ref{sec:sim}. The proofs for Section~\ref{sec:sstep} are given in Section~\ref{sec:proofs}, with some auxiliary lemmas collected in Section~\ref{sec:auxiliary}. Finally,  all other proofs are deferred to a supplementary material.

Throughout, all convergences are for $n\to\infty$ if not mentioned otherwise. Weak convergence is denoted by $\wto$. The left-continuous generalized inverse of some increasing function $H$ is denoted by $H^{-1}(p) = \inf \{ x \in \R: H(x) > p \}$.  The sup-norm of some real-valued function $f$ defined on some domain $T$ is denoted by $\|f\|_\infty$.

\section{Mathematical Preliminaries} \label{sec:assumptions}

Let $k=k_n$ be an increasing integer sequence satisfying $k\to\infty$ and $k=o(n)$ as $n \to \infty$; the STEP and our estimators for the scedasis function will be defined in terms of $k$, which essentially determines the threshold for declaring an observation as extreme. Let $L \in \N$ be some arbitrary but fixed constant (later determining, on which set the STEP will be defined; most often, we need $L=1$ or $L=2$). We impose the following set of assumptions:
\begin{compactenum}[(B8)]
\renewcommand{\theenumi}{(B0)}
\renewcommand{\labelenumi}{\theenumi}
\item \label{cond:basic} \textbf{Basic assumptions.} 
The conditions on the model in Section \ref{sec:intro} are met. 
\renewcommand{\theenumi}{(B1)}
\renewcommand{\labelenumi}{\theenumi}
\item \label{cond:reg} \textbf{Multivariate regular variation.} For each $n\in\N$, $U_{1}^{\scs (n)}, \dots, U_{n}^{\scs (n)}$ is an excerpt from a strictly stationary time series $(U_t^{\scs (n)})_{t\in\Z}$ whose marginal stationary distribution is necessarily standard uniform on $(0,1)$. The processes $(U_t^{\scs (n)})_{t\in\Z}$ are all equal in law; denote a generic version by $(U_t)_{t\in\Z}$.  The process $Z_t = 1/(1-U_{t})$ (note that $Z_t$ is standard Pareto) is  stationary and regularly varying, necessarily  with index $\alpha=1$ \citep{BasSeg09}.
\renewcommand{\theenumi}{(B2)}
\renewcommand{\labelenumi}{\theenumi}
\item \label{cond:sbias} \textbf{Regularity of $c$.} 
The function $c$ is Hölder-continuous of order $1/2$, that is, there exists $K_c>0$ such that
\[
|c(s) - c(s')| \le K_c |s-s'|^{1/2} \qquad \forall\ s,s'\in[0,1].
\]
\renewcommand{\theenumi}{(B3)}
\renewcommand{\labelenumi}{\theenumi}
\item \label{cond:mixing} \textbf{Blocking sequences and Beta-mixing.} 
There exist integer sequences $1< \ell_n < r=r_n<n$, both converging to infinity as $n\to\infty$ and satisfying 
$
\ell_n = o(r), r = o(\sqrt{k} \vee \tfrac{n}{k}), 
$
such that the beta-mixing coefficients of $(U_t)_{t\in\Z}$ satisfy 
$
\beta(n)= o(1)$ and $\frac{n}{r} \beta(\ell_n) = o(1)$.
\renewcommand{\theenumi}{(B4)}
\renewcommand{\labelenumi}{\theenumi}
\item \label{cond:expnum} \textbf{Moment bound on the number of extreme observations.} Let $c_\infty=c_\infty(L)=1+ L\|c\|_\infty$, where $\| \cdot\|_\infty$ denotes the sup norm of a real-valued function. There exists $\delta>0$ such that
\begin{align*}
\Exp\Big[ \Big\{\sum_{s=1}^{r} \bm 1(U_s > 1- \tfrac{k}n c_\infty(L)) \Big\}^{2+\delta} \Big]= O(r\tfrac{k}n).
\end{align*}
\renewcommand{\theenumi}{(B5)}
\renewcommand{\labelenumi}{\theenumi}
\item \label{cond:hbound} \textbf{Moment bound on extreme increments.} There exists a non-decreasing, continuous function $h:[0,c_\infty(L)] \to [0,\infty)$, positive on $(0,c_\infty(L)]$ and with $h(0)=0$, such that, for all sufficiently large $n$,
\begin{align*}
\Exp\Big[\Big\{\sum_{s=1}^{r} \bm 1(1-\tfrac{k}n x \ge U_s > 1- \tfrac{k}n y) \Big\}^{2} \Big] \le r \frac{k}n \times h(y-x)
\end{align*}
for all $0 \le x \le y \le c_\infty(L)$ with $c_\infty(L)$ from \ref{cond:expnum}.
\renewcommand{\theenumi}{(B6)}
\renewcommand{\labelenumi}{\theenumi}
\item \label{cond:2nd order}  \textbf{Second order condition.}
There exists a positive, eventually decreasing function $A$ with $\lim_{t \to \infty} A(t) = 0$ such that, as $x \uparrow x^*$, 
\[ 
\sup_{n \in \N} \max_{1 \leq i \leq n} \left| \frac{1-F_{n,i}(x)}{1-F(x)} - c\Big(\frac{i}{n}\Big) \right| = O\Big( A \Big(\frac{1}{1-F(x)}\Big) \Big). 
\]
\end{compactenum}

Condition~\ref{cond:reg} allows to control the serial dependence within the observed time series via \textit{tail processes} \citep{BasSeg09}. More precisely, 
by Theorem 2.1 in \cite{BasSeg09}, regular variation of $(Z_t)_{t\in\Z}$ is equivalent to the fact that there exists a process $(Y_t)_{t\in\N_0}$ (the \textit{tail process}) with $Y_0$ standard Pareto such that, for every $\ell\in\N$ and as $x\to\infty$,
\begin{align} \label{eq:tp}
\Prob(x^{-1} (Z_0, \dots, Z_\ell) \in \cdot \mid Z_0 > x) \dto \Prob((Y_0, \dots, Y_\ell) \in \cdot),
\end{align}
where, necessarily,  $Y_j\ge 0$ for $j\ge 1$. Further, by Theorem 2 and its subsequent discussion in \cite{Seg03}, $Y_j$ is absolutely continuous on $(0,\infty)$ and may have an atom at $0$.

Condition \ref{cond:sbias}  has also been imposed in \cite{EinDehZho16}. Since $k=o(n)$, it implies that
\[
\lim_{n\to\infty} \sup_{s \in [0,1]} {\sqrt k} \bigg| \frac{1}{n} \sum_{i=1}^{\ip{ns}} c(\tfrac{i}n)  - C(s)\bigg| = 0,
\] 
which will imply that there is no asymptotic bias in our main result below. The condition will however also be needed to prove \eqref{eq:rho} below.

The conditions in \ref{cond:mixing}, \ref{cond:expnum}, \ref{cond:hbound} are essentially conditions imposed in  Example~3.8  in \cite{DreRoo10} for deriving weak convergence of the standard non-sequential univariate tail empirical process under stationarity. Condition~\ref{cond:hbound} has mostly been  shown with $h(z)=K z$, for some $K>0$, see, e.g., \cite{Dre00} for solutions of stochastic recurrence equations. Condition \ref{cond:2nd order} is a second-order condition on the speed of convergence in (\ref{eq:common F}); it was also used in \cite{EinDehZho16}. It is worth noting that Conditions \ref{cond:expnum}-\ref{cond:hbound} (and only these) depend on the constant $L \in \N$. The sequence $\ell_n$ in \ref{cond:mixing} plays the role of a small-block length in a big-block-small-block technique, while $r-\ell_n$ is the length of a corresponding big block.

\begin{example} \label{ex:model}
 Let us consider the following location-scale model, for which the above conditions can be shown to hold. Let 
 \[ 
 X_i^{(n)} = \sigma \big( \tfrac{i}{n} \big) W_i + \mu \big( \tfrac{i}{n} \big), \quad i=1,\ldots,n,
 \] 
 where $(W_t)_{t \in \Z}$ is a strictly stationary time series (see below for an explicit example) with c.d.f.\ $F$ and where $\sigma : [0,1] \to (0,\infty)$ and $\mu:[0,1] \to \R$ are sufficiently smooth functions. Then, we obtain 
\[
F_{n,i}(x) = F \Big( \frac{x-\mu (\tfrac{i}{n})}{\sigma ( \tfrac{i}{n})} \Big), \quad x \in \R, 
\] 
and $U_i^{(n)} = F_{n,i}(X_i^{(n)})  = F(W_i)$, $i=1,\ldots,n$, such that $U_1^{(n)},\ldots,U_n^{(n)}$ is an excerpt from a strictly stationary time series, with marginal distribution given by the uniform distribution on $[0,1]$. 
 
Next, as a special case, consider $(W_t)_{t \in \Z}$ to be a max-autoregressive process (ARMAX), defined by the recursion 
\begin{align} 
W_t = \max \{ \lambda W_{t-1}, (1-\lambda) V_t \}, \quad t \in \Z, \label{def_ARMAX}
\end{align} 
where $\lambda \in [0,1)$ and $(V_t)_{t \in \Z}$ is an i.i.d. sequence of Fréchet(1)-distributed random variables. A stationary solution of the above recursion is given by $W_t = \max_{j \geq 0} (1-\lambda) \lambda^j V_{t-j},$ such that the stationary solution is again Fréchet(1)-distributed, i.e., $F(x) = \exp(-1/x)$. Then, the scedasis function $c$ can be easily calculated via 
\[ 
\lim_{x \to \infty} \frac{1-F_{n,i}(x)}{1-F(x)} = \lim_{x\to \infty} \frac{1-\exp\big(- \sigma(\tfrac{i}{n}) / \{ x-\mu ( \tfrac{i}{n}) \} \big)}{1-\exp(-1/x)}  = \sigma \big( \tfrac{i}{n} \big),
\] 
yielding $c=\sigma$. We show that Conditions \ref{cond:basic}-\ref{cond:2nd order} are met. Condition \ref{cond:basic} and \ref{cond:sbias} are obviously fulfilled, provided the scedasis function $c$ is sufficiently regular. Condition \ref{cond:reg} can be seen to hold as follows. Since $(W_t)_{t \in \Z}$ is a moving maximum process, its tail process exists by Theorem 13.5.5 in \cite{KulSou20}, which implies that it is regularly varying by Theorem 2.1 in \cite{BasSeg09}. Then, $Z_t=1/(1-U_t)=1/\{1-F(W_t)\}$ is regularly varying with index $\alpha=1$ according to Lemma 2.1 in \cite{DreSegWar15}. By \cite{BerBuc18}, page 2322, $(W_t)_{t \in \Z}$, and hence also $(U_t)_{t \in \Z}$, is geometrically $\beta$-mixing, whence Condition \ref{cond:mixing} holds. In that reference it is further shown that their Condition 2.1(ii) holds for $\delta =1$, implying that our Condition \ref{cond:expnum} also holds for $\delta = 1$ in view of the fact that, $rk =o(n)$ by Condition \ref{cond:mixing}.
This also yields that 
$
\Exp [|\sum_{s=1}^{r} \I ( 1- \tfrac{k}{n} x \geq U_s > 1- \tfrac{k}{n} y ) |^3]\lesssim r \frac{k}{n}(y-x)$ for all $0 \leq x \leq y \leq c_{\infty}(L), 
$
for $n$ large enough (such that $rk/n \leq 1$), which implies \ref{cond:hbound}. Finally, Condition \ref{cond:2nd order} can be seen to hold for $A(x)=x^{-1}$. 
\end{example}


\section{Estimation of the (integrated) scedasis function} \label{sec:scedasis}

In this section, we provide weak convergence  results for estimators for the scedasis function $c$ and its integrated version $C$; see also \cite{EinDehZho16} for related results in the serial independent case. Throughout, let $X_{n,1} \leq \ldots \leq X_{n,n}$ denote the order statistic of $X_1^{\scs (n)},\ldots,X_n^{\scs (n)}$.
 
First, for the estimation of the scedasis function, we apply a kernel density estimator. Let $K$ be a continuous and symmetric function on $[-1,1]$ with $K(x) = 0$ for $|x| > 1$ and $\int_{-1}^{1}K(x) \ \mathrm{d}x = 1$. Let $h= h_n > 0$ denote a bandwidth paramater. Since we are also concerned with the estimation of $c$ near the boundaries of the interval $[0,1]$, we make use of the boundary-corrected kernel $K_b$ of $K$ (see \citealp{Jones93}): 
for $s \in [0,1]$, set 
\[ 
\tilde{c}_n(s) = \frac{1}{k h} \sum_{i=1}^{n} \I(X_i^{(n)} > X_{n,n-k} ) \ K_b\Big( \frac{s-i/n}{h}, s \Big),  
\] 
where $k=k_n$ is from Condition \ref{cond:mixing} and where $K_b$ is defined as follows. First,  
for $p \in [0,1]$, let 
\[ 
a_j(p) = \int_{-1}^{p} x^j K(x) \ \mathrm{d}x, \quad b_j(p) = \int_{-p}^{1} x^j K(x) \ \mathrm{d}x. 
\]
For $s \le h$, write $s=ph$ and let 
\[ 
K_b(x,s) = \frac{a_2(p) - a_1(p)x}{a_0(p)a_2(p)-a_1^2(p)}K(x), \quad x \in [-1,1], 
\]
and for $s \ge 1- h$, write $s=1-ph$ and let
\[ 
K_b(x,s) = \frac{b_2(p) - b_1(p)x}{b_0(p)b_2(p)-b_1^2(p)}K(x), \quad x \in [-1,1], 
\]
and for $s \in (h,1-h)$, let $K_b(x,s)=K(x)$ for $x \in [-1,1]$. 
Note that $K_b$ is depending on $n$, which we have suppressed from the notation.

To obtain asymptotic normality of the introduced estimator we additionally impose the following condition.

\begin{compactenum}[(B7)]
	\renewcommand{\theenumi}{(B7)}
	\renewcommand{\labelenumi}{\theenumi}
	\item \label{cond:bandwidth}  \textbf{Bandwidth.}
	The bandwidth sequence $h = h_n > 0$ satisfies $h \to 0$ and $kh \to \infty$. Further, $k^{1/5} h \to \lambda \geq 0$ and $r=o(\sqrt{kh})$ and $h \ge k^{-1/3}$. 
\end{compactenum}	
The first three conditions in \ref{cond:bandwidth} are standard bandwidth conditions that have also been imposed in Proposition 2 in \cite{EinDehZho16} to establish asymptotic normality of the scedasis estimator at point $s=1$. The condition $r=o(\sqrt{kh})$ is slightly stronger than $r=o(\sqrt{k})$ from Condition \ref{cond:mixing}, which is used in Theorem~\ref{thm:integr_scedasis} below to derive asymptotic normality of the estimator for the integrated scedasis function, where the rate of convergence is $\sqrt{k}$. Finally, the condition $h \ge k^{-1/3}$ is required for technical reasons in the proof (together with $r=o(n/k)$ from Condition \ref{cond:mixing}, it implies $kr^2 = o(n^2 h^3)$, which will be used throughout the proofs); note that it is satisfied for the standard MSE optimal bandwidth choice of the order $k^{-1/5}$ \citep{Tsy09}.

\begin{theorem} \label{thm:scedasis}
	Suppose that Conditions \ref{cond:basic}-\ref{cond:bandwidth} hold for $L=2$ and let $c \in C^2([0,1])$. Let the function $K$ be Lipschitz-continuous and symmetric on $[-1,1]$ with $K(x) = 0$ for $|x| > 1$ and $\int_{-1}^{1}K(x) \ \mathrm{d}x = 1$. 
	If $k$ satisfies $\sqrt{k}A(\tfrac{n}{2k}) \to 0$, then, for any $s \in [0,1]$ and as $n \to \infty$,
	\[ 
	\sqrt{kh} \big\{ \tilde{c}_n(s) - c(s) \big\} \wto \Nor(\mu_s, \sigma_s^{2}), 
	\]
	where
	\begin{align*}
	\mu_s &= \lambda^{5/2} \frac{c''(s)}{2} a(s) , \quad \sigma_s^{2} = c(s) \eta(s) \ \Big\{ d_0(1,1) + 2 \sum_{h=1}^{\infty} d_h(1,1) \Big\} 
	\end{align*}
	and where, recalling the tail process $(Y_t)_{t\in\N_0}$ associated with $(Z_t)_{t\in\Z}$ from \eqref{eq:tp},
	\begin{align} \label{eq:defdh}
	d_h(x,x') = \Prob\Big(Y_0 > \frac{1}{ x} , Y_{h} > \frac{1}{x'}\Big)
	\end{align}  
and $a(0) = \int_{-1}^{0} K_b(x,0)x^2 \ \mathrm{d}x,  \eta(0) = \int_{-1}^{0} K_b^2(x,0) \ \mathrm{d}x, a(1) = \int_{0}^{1} K_b(x,1)x^2  \ \mathrm{d}x,  \eta(1) =  \int_{0}^{1} K_b^2(x,1) \ \mathrm{d}x$ and, for $s\in(0,1)$,
\begin{align*}
a(s) &= \int_{-1}^{1} K(x) x^2 \ \mathrm{d}x,   \qquad	\eta(s) = \int_{-1}^{1} K^2(x) \ \mathrm{d}x.
\end{align*}
\end{theorem}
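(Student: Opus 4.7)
The plan is to decompose $\tilde c_n(s)-c(s)$ into (i) a random-threshold replacement, (ii) a deterministic bias, and (iii) a centered stochastic term, and then to combine the STEP functional CLT from Section~\ref{sec:sstep} with a big-block-small-block Lyapunov CLT. Let $t_n=F^{-1}(1-k/n)$ and write
\[
\tilde c_n(s) = G_n(\hat Y_n;s), \qquad G_n(y;s) = \frac{1}{kh}\sum_{i=1}^{n}\I\bigl(X_i^{\scs(n)}>F^{-1}(1-ky/n)\bigr) K_b\!\Bigl(\tfrac{s-i/n}{h},s\Bigr),
\]
with $\hat Y_n = n(1-F(X_{n,n-k}))/k$. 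Set $A_n(s)=G_n(1;s)$. I will first show that $\sqrt{kh}(\tilde c_n(s)-A_n(s))=\op$: using the (non-kernel weighted) STEP applied at $s=1$ we have $\sqrt{k}(\hat Y_n-1)=O_\Prob(1)$, while $\Exp[G_n(y;s)]=yc(s)+o(1)$; stochastic equicontinuity of the kernel-weighted STEP around $y=1$ (at rate $\sqrt{kh}$) then yields $G_n(\hat Y_n;s)-A_n(s) = c(s)(\hat Y_n-1)+\op/\sqrt{kh}$, and the leading piece contributes $c(s)\sqrt{h}\cdot O_\Prob(1)=\op$.

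For the bias term $\sqrt{kh}(\Exp[A_n(s)]-c(s))$, I will insert the second-order expansion of Condition~\ref{cond:2nd order}, so that $1-F_{n,i}(t_n)=c(i/n)\cdot k/n\cdot\{1+O(A(n/k))\}$ uniformly in~$i$; the error from the $A(n/k)$-remainder is $O(A(n/(2k)))=o(1/\sqrt{k})=o(1/\sqrt{kh})$ by the rate assumption $\sqrt{k}A(n/(2k))\to 0$. The main term is the Riemann sum
\[
\frac{1}{nh}\sum_{i=1}^{n}c\bigl(i/n\bigr)K_b\!\Bigl(\tfrac{s-i/n}{h},s\Bigr) = \int c(s-hx)K_b(x,s)\,\diff x + O\bigl(1/(nh)\bigr),
\]
and by Taylor expanding $c\in C^2$ together with the moment identities $\int K_b(x,s)\diff x=1$, $\int xK_b(x,s)\diff x=0$, $\int x^2K_b(x,s)\diff x=a(s)$ (which hold for both boundary and interior regions by construction), the leading bias equals $\tfrac{h^2}{2}c''(s)a(s)+o(h^2)$. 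Multiplying by $\sqrt{kh}=k^{1/2}h^{1/2}$ and invoking $k^{1/5}h\to\lambda$ gives $\sqrt{kh}\cdot h^2 \to\lambda^{5/2}$ and hence $\sqrt{kh}(\Exp[A_n(s)]-c(s))\to\mu_s$.

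For the stochastic piece $\sqrt{kh}(A_n(s)-\Exp[A_n(s)]) = (kh)^{-1/2}\sum_{i=1}^{n}\xi_{n,i}$ with $\xi_{n,i}=\{\I(X_i^{\scs(n)}>t_n)-(1-F_{n,i}(t_n))\}K_b((s-i/n)/h,s)$, I will run the big-block-small-block scheme with blocks of length $r$ and buffers $\ell_n$ from Condition~\ref{cond:mixing}: beta-mixing replaces the big-block sums by independent copies at a total cost of $O(n\beta(\ell_n)/r)=o(1)$, while the small blocks are controlled via Condition~\ref{cond:expnum} to have negligible second moment. Lyapunov's condition for the independent-block approximation follows from the $(2+\delta)$-moment bound in Condition~\ref{cond:expnum}, combined with $r=o(\sqrt{kh})$ from Condition~\ref{cond:bandwidth}. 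For the limiting variance, the diagonal contribution $\frac{1}{nh}\sum_i c(i/n)K_b^2((s-i/n)/h,s)$ tends to $c(s)\eta(s)$; for a fixed lag $\tau\ge1$, regular variation and the tail process (Condition~\ref{cond:reg}) together with the local approximation $c(i/n)\approx c(s)$ inside the bandwidth window yield $\Prob(X_i^{\scs(n)}>t_n,\,X_{i+\tau}^{\scs(n)}>t_n)/(k/n)\to c(s)d_\tau(1,1)$, and summing the kernel-weighted covariances over $\tau$ (with tail-summability handled by Condition~\ref{cond:hbound} and the mixing condition) produces the factor $d_0(1,1)+2\sum_{\tau\ge1}d_\tau(1,1)$, giving $\sigma_s^2=c(s)\eta(s)\{d_0(1,1)+2\sum_{\tau\ge1}d_\tau(1,1)\}$.

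The main obstacle is the asymptotic variance computation in Step~3: the combination of serial dependence, non-stationary marginals, and the kernel weights of effective width $nh$ requires a careful uniform Riemann-sum approximation over the bandwidth window (so that $c(i/n)$ can be replaced by $c(s)$ in the covariance contributions) and a correct passage from joint tail probabilities to the tail-process quantities $d_\tau(1,1)$ in the non-stationary regime. The equicontinuity step required for the random-threshold reduction is also delicate since it uses the kernel-weighted STEP at the correct rate $\sqrt{kh}$ rather than $\sqrt{k}$, which is exactly where the stronger bandwidth conditions $r=o(\sqrt{kh})$ and $h\ge k^{-1/3}$ in Condition~\ref{cond:bandwidth} come in.
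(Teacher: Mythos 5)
Your proposal is correct in outline and follows essentially the same architecture as the paper's proof: reduce the random threshold $X_{n,n-k}$ to a deterministic one using $\sqrt{k}(y_n-1)=O_\Prob(1)$ and the extra factor $\sqrt{h}$, compute the bias by a Riemann-sum/Taylor expansion against the boundary-corrected kernel moments, and prove the CLT for the centered sum by a big-block--small-block scheme with a Lindeberg condition driven by $r=o(\sqrt{kh})$ and a variance computed from the tail process. The one step where you genuinely deviate is the random-threshold reduction: you propose an asymptotic linearization $G_n(\hat Y_n;s)-G_n(1;s)=c(s)(\hat Y_n-1)+o_\Prob((kh)^{-1/2})$, which requires establishing stochastic equicontinuity of the \emph{kernel-weighted} process in the threshold variable at rate $\sqrt{kh}$ --- an additional (nontrivial) result the paper never proves. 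The paper instead splits $K_b$ into its positive and negative parts $K_b^{\pm}$, so that $c_n^{\pm}(y,s)$ is monotone in $y$, and sandwiches $\tilde c_n(s)$ between $c_n^+(y^-,s)-c_n^-(y^+,s)$ and $c_n^+(y^+,s)-c_n^-(y^-,s)$ with the deterministic levels $y^{\pm}=1\pm(k^{1/2}h^{1/4})^{-1}$; it then only needs to show that both deterministic bounds converge to the same normal limit, which is exactly what the subsequent bias and CLT computations deliver. The sandwich buys you a proof that avoids any equicontinuity argument in $y$ for the kernel-weighted process; your route is viable but you would need to supply that equicontinuity lemma (or adopt the paper's monotonicity device) to close the argument. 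The remaining steps --- in particular the passage from joint exceedance probabilities to $d_\tau(1,1)$ via the tail process, and the truncation of the lag sum --- match the paper's treatment (its Lemma on the probability approximation and the cluster-core truncation under Condition (B4)).
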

It is part of the assertion that the series defining $\sigma^2_s$ is convergent. The result may further be extended to cover the cases $s=s_n=ph$ and $s=s_n=1-ph$ for some $p\in (0,1]$; details are omitted for the sake of brevity.


Next, we analyze an estimator for the integrated scedasis function $C(s) = \int_{0}^{s} c(x) \ \mathrm{d}x$, that was also investigated in \cite{EinDehZho16}. Define the estimator for $C$ as
\[ \hat{C}_n(s) = \frac{1}{k} \sum_{i=1}^{\lfloor ns \rfloor} \I\big(X_i^{(n)} > X_{n,n-k} \big), \quad s \in [0,1]. \] 

\begin{theorem} \label{thm:integr_scedasis}
	Suppose that Conditions \ref{cond:basic}-\ref{cond:2nd order} hold for $L=1$ and that $k$ satisfies $\sqrt{k}A(\tfrac{n}{k}) \to 0$. Then, as $n \to \infty$,
\[ 
\big\{ \sqrt{k} \big( \hat{C}_n(s) - C(s) \big) \big\}_{s \in [0,1]} \wto \big\{ \mathbb{S}(s,1) - C(s) \mathbb{S}(1,1) \big\}_{s \in [0,1]} 
\]
	in $(\ell^{\infty}([0,1]), \|\cdot\|_{\infty})$, where $\mathbb{S}$ denotes a tight, centered Gaussian process on $[0,1]^2$ with covariance given by
\begin{align} \label{eq:cov2}
\mathfrak c((s,x),(s',x')) = C(s\wedge s')  \Big\{ d_0(x,x') + \sum_{h =1}^\infty  \big( d_h( x,  x') + d_h(x',x) \big)  \Big\},
\end{align}
where $d_h$ is defined in \eqref{eq:defdh}. It is part of the assertion that the above series is convergent.  
\end{theorem}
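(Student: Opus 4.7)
The plan is to express $\hat C_n$ as a (random) slice of a bivariate sequential tail empirical process (STEP) and then combine the STEP functional CLT of Section~\ref{sec:sstep} with a short delta-method argument for the random threshold. Setting $V_i^{(n)} = 1-F(X_i^{(n)})$, the event $\{X_i^{(n)} > X_{n,n-k}\}$ becomes $\{V_i^{(n)} \leq V_{n,k:n}\}$, where $V_{n,k:n}$ denotes the $k$-th order statistic of $V_1^{(n)},\dots,V_n^{(n)}$. I would then introduce
\[
T_n(s,y) \;=\; \frac{1}{k}\sum_{i=1}^{\lfloor ns\rfloor} \I\bigl(V_i^{(n)} \leq \tfrac{k}{n} y\bigr), \qquad (s,y)\in[0,1]\times[0,M],
\]
for some fixed $M>1$, and observe that $\hat C_n(s) = T_n(s,Y_n)$ with random level $Y_n = (n/k)V_{n,k:n}$ satisfying $T_n(1,Y_n) = 1$ almost surely (by continuity of the $F_{n,i}$).

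Next I would apply the STEP FCLT (Section~\ref{sec:sstep}, with $L=1$) to the centred process $S_n(s,y) = \sqrt k\{T_n(s,y) - yC_n(s)\}$ with $C_n(s) = n^{-1}\sum_{i=1}^{\lfloor ns\rfloor}c(i/n)$, obtaining $S_n \wto \mathbb S$ in $\ell^\infty([0,1]\times[0,M])$ for some tight, centred Gaussian $\mathbb S$ with covariance $\mathfrak c$. Two short bias checks would ensure this is the right centring: the second-order condition \ref{cond:2nd order} together with $\sqrt k A(n/k) \to 0$ renders the difference between $\Exp T_n(s,y)$ and $yC_n(s)$ negligible after scaling by $\sqrt k$, while Hölder continuity \ref{cond:sbias} delivers $\sqrt k \sup_s|C_n(s)-C(s)| = O(\sqrt{k/n}) \to 0$, so $yC_n(s)$ and $yC(s)$ are interchangeable as centrings.

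To remove the random threshold, continuous mapping applied to $T_n(1,Y_n)=1$ first yields $Y_n \pto 1$. Rearranging the same identity and invoking stochastic equicontinuity of $S_n(1,\cdot)$ at $y=1$ (inherited from the FCLT) then gives
\[
\sqrt k(Y_n - 1) \;=\; -\frac{S_n(1,Y_n)}{C_n(1)} + o(1) \;=\; -S_n(1,1) + o_\Prob(1),
\]
and analogously $S_n(s,Y_n) = S_n(s,1) + o_\Prob(1)$ uniformly in $s\in[0,1]$. Combining these pieces,
\begin{align*}
\sqrt k\{\hat C_n(s) - C(s)\}
&= S_n(s,Y_n) + C_n(s)\sqrt k(Y_n - 1) + \sqrt k\{C_n(s)-C(s)\} \\
&= S_n(s,1) - C(s)\,S_n(1,1) + o_\Prob(1)
\end{align*}
uniformly in $s$, and continuous mapping delivers the stated weak convergence. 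The limiting covariance, read off from $\mathfrak c$, simplifies to $\{d_0(1,1) + 2\sum_{h\geq 1} d_h(1,1)\}\{C(s\wedge s') - C(s)C(s')\}$, with summability of the series inherited from the STEP FCLT.

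The main obstacle is the STEP FCLT itself, which has to control the bivariate sequential tail empirical process under serial dependence via a big-block/small-block scheme driven by \ref{cond:mixing}--\ref{cond:hbound}; this is the content of Section~\ref{sec:sstep}. Given that result, the one remaining subtlety in the argument above is the uniform replacement $S_n(s,Y_n) = S_n(s,1) + o_\Prob(1)$, which rests on joint tightness of $(S_n,Y_n)$ and asymptotic equicontinuity of $\mathbb S$ in its second argument.
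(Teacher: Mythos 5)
Your proposal follows essentially the same route as the paper: write $\hat C_n(s)$ as the STEP evaluated at the random threshold $y_n=(n/k)\{1-F(X_{n,n-k})\}$, deduce $\sqrt k(y_n-1)=-\mathbb F_n(1,1)+o_\Prob(1)$ from the identity $\hat C_n(1)=1$ (the paper phrases this via the functional delta method for the inverse map, you via direct rearrangement, which is equivalent), and replace $\mathbb F_n(s,y_n)$ by $\mathbb F_n(s,1)$ uniformly in $s$ using the asymptotic equicontinuity established in the proof of the STEP FCLT — exactly the paper's display \eqref{eq:fneq}. The minor variations (centring at $C_n(s)$ rather than $C(s)$, absorbed by Condition \ref{cond:sbias}) do not change the argument, so the proof is correct as proposed.
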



\section{Testing for heteroscedastic extremes} \label{sec:testing}

In the following we construct tests that allow to detect whether the time series exhibits heteroscedasticity of extremes. Here, the extremes are homoscedastic (i.e., not heteroscedastic) if the scedasis function satisfies $c \equiv 1$ or if, equivalently, the integrated scedasis function satisfies $C(s)=s$ for all $s\in [0,1]$. Thus, we test 
\[ 
H_0 : C(s) = s \ \textrm{ for all } \ s \in [0,1], \qquad
H_1: C(s) \neq s \ \textrm{ for some } \ s \in [0,1].
\]
To this purpose, we pursue two approaches, where one is based on a bootstrap-procedure and the other uses a self-normalization technique. Let $\C (s) = \sqrt{k} \{ \hat C_n(s) -s \}$, $s \in [0,1]$, such that, by Theorem \ref{thm:integr_scedasis}, under $H_0$ and as $n \to \infty$,
\begin{align*}
  \big\{ \C (s) \big\}_{s \in [0,1]} \wto \big\{ \mathbb{S}(s,1) - s \mathbb{S}(1,1) \big\}_{s \in [0,1]}
\end{align*}
in $(\ell^{\infty}([0,1]), \|\cdot\|_{\infty})$.
Note that $\mathbb{S}(\cdot, 1)$ is a tight, centered Gaussian process on $[0,1]$ satisfying $\Cov(\mathbb{S}(s,1),\mathbb{S}(t,1))=(s \wedge t)\sigma^2$, $s,t \in [0,1]$, where
$ 
\sigma^2 = d_0(1,1)+2 \sum_{h=1}^{\infty} d_h(1,1)$ and $d_h$ is defined in Theorem \ref{thm:integr_scedasis}, which implies that under $H_0$, as $n \to \infty$,
\begin{align}
  \C \wto \sigma \mathbb{B} \quad \textrm{in} \ (\ell^{\infty}([0,1]), \|\cdot\|_{\infty}), \label{brownianbridge}
\end{align}
where $\mathbb{B}$ denotes a Brownian Bridge on $[0,1]$. 

For both approaches take the block length parameter $r$ from Condition \ref{cond:mixing} (which now becomes a hyperparameter of the statistical method; see \citealp{Dre15} and \citealp{KulSou20} for a similar approach), set $m = \lfloor n/r \rfloor$ and let 
\begin{align*} 
I_j = \{ (j-1)r+1, \ldots, jr\}, \quad j=1, \ldots, m, 
\end{align*}
be the $j$-th block of size $r$.

We start with the bootstrap, more precisley, we use a multiplier block bootstrap. Let $B\in\N$ denote the number of boostrap repetitions and let $(\xi_1^{\scs (b)},\ldots,\xi^{\scs (b)}_{m})_{b=1,\ldots,B}$ be i.i.d. and independent from $(X_i^{\scs (n)})_i$, with $\Exp[\xi_j^{\scs (b)}]=0, \Exp[(\xi_j^{\scs (b)})^2]=1$ and $|\xi_j^{\scs (b)}| \leq M$ for some constant $M>0$ for all $j=1,\ldots,m$ and $b=1,\ldots,B$ (for instance, $\xi_j^{\scs (b)}$ is Rademacher distributed). 
Set 
\begin{align*} 
\Cxi^{(b)} (s) &= \mathbb{D}^{(b)}_{n,\xi}(s) - \hat C_n(s) \mathbb{D}^{(b)}_{n,\xi}(1),
\end{align*}
where
\begin{align}
\mathbb{D}^{(b)}_{n,\xi}(s) &= \frac{1}{\sqrt{k}} \sum_{j=1}^{m} (\xi_j^{(b)} - \bar{\xi}^{(b)}) \sum_{i \in I_j} \I \Big( X_i^{(n)} > X_{n,n-k} \Big) \I(\tfrac{i}{n} \leq s), \quad s \in [0,1], \nonumber 
\end{align}
and $\bar{\xi}^{(b)}= m^{-1} \sum_{j=1}^{m} \xi_j^{(b)}$. Note that we may write
\[
 \mathbb{D}^{(b)}_{n,\xi}(s) 
= \frac{1}{\sqrt{k}} \sum_{j=1}^m \xi_j^{(b)} \Big\{ Y_{n,j}(s)-\frac{1}{m} \sum_{\ell=1}^m Y_{n,\ell}(s)\Big\}
\]
with $Y_{n,j}(s)=\sum_{i \in I_j} \I \big( X_i^{(n)} > X_{n,n-k} \big) \I(\tfrac{i}{n} \leq s)$, which is akin to the process considered in Formula~(2.3) in \cite{Dre15}.

\begin{theorem} \label{thm:boot_jointconv}
 Suppose that Conditions \ref{cond:basic}-\ref{cond:2nd order} hold for $L=1$ and that $k$ satisfies $\sqrt{k}A(\tfrac{n}{k}) \to 0$. Then, as $n \to \infty$,
 \begin{align*}
 \Big(\mathbb C_n, \Cxi^{(1)}, \ldots, \Cxi^{(B)} \Big)
 \wto \Big( \mathbb{C}, \mathbb{C}^{(1)}, \ldots, \mathbb{C}^{(B)} \Big) \quad \textrm{in} \ (\ell^{\infty}([0,1]), \|\cdot\|_{\infty})^{B+1},
 \end{align*}
 where $\mathbb{C}(s) = \mathbb{S}(s,1)-C(s)\mathbb{S}(1,1)$ and $\mathbb{C}^{(1)}, \ldots, \mathbb{C}^{(B)}$ are independent copies of $\mathbb{C}$.
\end{theorem}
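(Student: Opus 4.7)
The plan is to reduce the statement to a joint functional central limit theorem for the tuple $(\mathbb{C}_n, \mathbb{D}_{n,\xi}^{(1)}, \ldots, \mathbb{D}_{n,\xi}^{(B)})$ and then transfer the result to $\mathbb{C}_{n,\xi}^{(b)}$ via continuous mapping and Slutsky's lemma. Writing
\begin{align*}
\mathbb{C}_{n,\xi}^{(b)}(s) = \big\{ \mathbb{D}_{n,\xi}^{(b)}(s) - C(s)\,\mathbb{D}_{n,\xi}^{(b)}(1) \big\} - \big\{ \hat C_n(s) - C(s) \big\}\,\mathbb{D}_{n,\xi}^{(b)}(1),
\end{align*}
and using $\sup_{s\in[0,1]} |\hat C_n(s) - C(s)| = O_\Prob(k^{-1/2})$ (which follows from Theorem \ref{thm:integr_scedasis}) together with the tightness $\mathbb{D}_{n,\xi}^{(b)}(1) = O_\Prob(1)$, the second bracket is $o_\Prob(1)$ uniformly in $s$. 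It therefore suffices to establish
\begin{align*}
\big(\mathbb{C}_n, \mathbb{D}_{n,\xi}^{(1)}, \ldots, \mathbb{D}_{n,\xi}^{(B)}\big) \wto \big(\mathbb{C}, \mathbb{S}^{(1)}(\cdot,1), \ldots, \mathbb{S}^{(B)}(\cdot,1)\big)
\end{align*}
in $(\ell^\infty([0,1]))^{B+1}$, with $\mathbb{S}^{(1)}, \ldots, \mathbb{S}^{(B)}$ independent copies of the process $\mathbb{S}$ of Theorem \ref{thm:integr_scedasis} and independent of the copy underlying $\mathbb{C}$; the map $(f, g_1, \ldots, g_B) \mapsto (f, g_1 - C g_1(1), \ldots, g_B - C g_B(1))$ is continuous on $(\ell^\infty([0,1]))^{B+1}$ and returns $\mathbb{C}^{(b)} = \mathbb{S}^{(b)}(\cdot,1) - C\,\mathbb{S}^{(b)}(1,1)$, which is an independent copy of $\mathbb{C}$.

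The key technical step, where I would extend arguments from \cite{Dre15} and Section 12 of \cite{KulSou20} to the non-stationary framework of this paper, is the above joint FCLT. I would start from the functional CLT for the sequential tail empirical process in Section \ref{sec:sstep}, augmented to handle $B$ independent multiplier copies. Since $\mathbb{D}_{n,\xi}^{(b)}$ is a sum over blocks weighted by multipliers that are centered and i.i.d.\ in $j$ and independent across $b$, the Cramér--Wold device combined with a conditional Lindeberg--Feller CLT delivers the finite-dimensional conditional limits, with the limiting covariance being diagonal in $b$. Tightness of each $\mathbb{D}_{n,\xi}^{(b)}$ would follow from a second-moment bound on the increments of the blockwise multiplier sums, controlled via Condition \ref{cond:hbound}; partial-block remainders at the right endpoint of the sample contribute only an $O_\Prob(r/\sqrt{k}) = o_\Prob(1)$ error thanks to $r = o(\sqrt{k})$ from Condition \ref{cond:mixing}.

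The main obstacle lies in identifying the limiting covariance of the multiplier processes and verifying that it coincides with the covariance of $\mathbb{S}(\cdot,1)$ coming from \eqref{eq:cov2}. Concretely, I would need to show convergences of the type
\begin{align*}
\frac{1}{k}\sum_{j=1}^m Y_{n,j}(s) Y_{n,j}(s') \pto C(s \wedge s') \Big\{ d_0(1,1) + 2\sum_{h=1}^\infty d_h(1,1) \Big\},
\end{align*}
where the scedasis function enters through the factor $C(s\wedge s')$ and the tail-process identities \eqref{eq:defdh} determine the series. The big-block-small-block decomposition governed by Condition \ref{cond:mixing} reduces the sum to one over approximately independent blocks, whose expected per-block contribution can then be analyzed via the moment bound of Condition \ref{cond:hbound} and the tail-process limit \eqref{eq:tp}. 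Once the covariance structure is pinned down, the mutual independence of the $B$ limits and their independence from $\mathbb{C}$ is inherited from the independence of the bootstrap weight families $(\xi_j^{(b)})_j$, $b=1,\ldots,B$, among themselves and from the data.
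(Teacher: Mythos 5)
Your proposal is correct in outline and follows the same macro-strategy as the paper: reduce the claim to a joint functional CLT for the data process together with the $B$ multiplier processes, then pass to $\Cxi^{(b)}$ by continuous mapping and uniform consistency of $\hat C_n$. (The paper's bookkeeping is slightly different --- it observes that, because $\hat C_n(1)=1$, centering the multipliers changes nothing, so $\Cxi^{(b)}$ equals the version built from raw $\xi_j^{(b)}$; your decomposition isolating $\{\hat C_n(s)-C(s)\}\mathbb{D}^{(b)}_{n,\xi}(1)$ achieves the same end.) Two substantive points distinguish your plan from the paper's proof. First, the paper proves an \emph{unconditional} joint CLT for $(\mathbb{S}_n,\mathbb{S}^{(1)}_{n,\xi},\dots,\mathbb{S}^{(B)}_{n,\xi})$ (Propositions \ref{prop:sstep_boot} and \ref{prop:step_boot} in the supplement): with the Cram\'er--Wold device applied to the joint vector, $\Exp[\xi_j]=0$ kills the cross-covariances and $\Exp[\xi_j^2]=1$ makes the diagonal covariances coincide with $\mathfrak c_n$ from the proof of Proposition \ref{prop:sstep}, so only convergence of \emph{expected} block covariances is needed --- which is already available. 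Your route via a \emph{conditional} Lindeberg CLT instead requires the strictly stronger in-probability convergence $\frac1k\sum_j Y_{n,j}(s)Y_{n,j}(s')\pto C(s\wedge s')\{d_0(1,1)+2\sum_h d_h(1,1)\}$, i.e.\ a law of large numbers for the empirical block covariances; this is doable (it is the Drees-type argument) but needs an extra variance/truncation step that Condition \ref{cond:expnum} with its $2+\delta$ moments must support, and it is genuinely more work than the paper's unconditional argument. Second, your reduction glosses over the random threshold: $Y_{n,j}(s)$ contains $\bm 1(X_i^{(n)}>X_{n,n-k})$, and replacing $X_{n,n-k}$ by the deterministic quantile (and then $F_{n,i}$ by $c(i/n)(1-F)$ via Condition \ref{cond:2nd order} and $\sqrt{k}A(n/k)\to0$) requires the two-parameter bootstrap processes $\mathbb{F}^{(b)}_{n,\xi}(s,x)$, $\mathbb{S}^{(b)}_{n,\xi}(s,x)$ and their asymptotic equicontinuity in $x$; make sure the multiplier FCLT you establish is for the two-parameter process so that the substitution $x=y_n\to1$ is legitimate, as in the paper's Proposition \ref{prop:step_boot}.
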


The previous theorem may alternatively be formulated as a conditional limit theorem, see Section~3.6 in \cite{VanWel96} or Section 10 in \cite{Kos08} for details on that mode of convergence when applied to non-measurable stochastic processes. More precisely, by Lemma 3.11 in \cite{BucKoj19}, the weak convergence relation in the previous theorem is equivalent to the fact that
$
\sup_{h \in \mathrm{BL}_1(\ell^\infty([0,1])} \big| \Exp \big[ h(\Cxi^{\scs (1)}) \mid X_{n,1}, \dots, X_{n,n}\big] - \Exp[h(\mathbb{C})] \big| = \op
$
and that $\Cxi^{\scs (1)}$ is asymptotically measurable, where $\mathrm{BL}_1(\ell^\infty([0,1])$ denotes the set of real valued Lipschitz functions on $\ell^\infty([0,1])$ with Lipschitz constant 1 that are bounded by~1. We prefer to work with the unconditional statement from Theorem~\ref{thm:boot_jointconv}, as it is more intuitive.

We propose to test for $H_0:c\equiv 1$ based on the test statistics
\[ 
S_{n,1} = \|\C\|_{\infty}, \quad T_{n,1} = \int_0^1 \C(s)^2 \ \mathrm{d}s.
\]
In view of Theorem~\ref{thm:boot_jointconv}, the corresponding bootstrap quantities are given by
\[ 
S_{n,1}^{(b)} = \|\Cxi^{(b)}\|_{\infty}, \quad T_{n,1}^{(b)} = \int_0^1 \Cxi^{(b)}(s)^2 \ \mathrm{d}s, \quad b =1,\ldots,B.
\]
For $\alpha\in(0,1)$, let $\hat q_{n,B,S}(1-\alpha)$ and $\hat q_{n,B,T}(1-\alpha)$ denote the empirical $(1-\alpha)$-quantile of $S_{n,1}^{\scs (1)},\ldots,S_{n,1}^{\scs (B)}$ and $T_{n,1}^{\scs (1)},\ldots,T_{n,1}^{\scs (B)}$, respectively. The test procedures are then defined as
\begin{align*}
 \varphi_{n,B,S}(\alpha) = \I \big(S_{n,1} > \hat q_{n,B,S}(1-\alpha)\big), \quad \varphi_{n,B,T}(\alpha) = \I \big(T_{n,1} > \hat q_{n,B, T}(1-\alpha) \big).
\end{align*}

\begin{corollary} \label{cor:test_boot}
 Suppose that Conditions \ref{cond:basic}-\ref{cond:2nd order} hold for $L=1$  and that $k$ satisfies $\sqrt{k}A(\tfrac{n}{k}) \to 0$. Let $\alpha \in (0,1)$. Then, if  $H_0: c \equiv 1$ is met, 
\[ 
\lim_{n,B \to \infty} \Prob(\varphi_{n,B,S}(\alpha)=1) = \alpha, \quad \lim_{n,B \to \infty} \Prob(\varphi_{n,B,T}(\alpha)=1)=\alpha.
\]
Further, if $H_1: c \not \equiv 1$ is met, then, for any $B \in \N$,
\[ 
\lim_{n \to \infty} \Prob (\varphi_{n,B,S}(\alpha) = 1) = 1, \quad  \lim_{n \to \infty} \Prob (\varphi_{n,B,T}(\alpha) = 1) = 1.
\]
\end{corollary}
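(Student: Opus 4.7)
The plan is to combine Theorem~\ref{thm:boot_jointconv} with the continuous mapping theorem and standard results on convergence of empirical quantiles of bootstrap samples. The functionals $f \mapsto \|f\|_\infty$ and $f\mapsto \int_0^1 f(s)^2\,\mathrm{d}s$ are continuous on $(\ell^\infty([0,1]),\|\cdot\|_\infty)$, hence Theorem~\ref{thm:boot_jointconv} and the continuous mapping theorem yield, for every fixed $B \in \N$, joint weak convergence
\[
(S_{n,1},S_{n,1}^{(1)},\ldots,S_{n,1}^{(B)}) \wto (S,S^{(1)},\ldots,S^{(B)}),\qquad (T_{n,1},T_{n,1}^{(1)},\ldots,T_{n,1}^{(B)}) \wto (T,T^{(1)},\ldots,T^{(B)}),
\]
where $S^{(b)}, T^{(b)}$ are i.i.d.\ copies of $S=\|\mathbb{C}\|_\infty$ and $T=\int_0^1 \mathbb{C}(s)^2\,\mathrm{d}s$, independent of $(S,T)$.

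Under $H_0$, $C(s)=s$ and \eqref{brownianbridge} identifies $\mathbb{C}$ with $\sigma\mathbb{B}$, so $S$ and $T$ are Kolmogorov- and Cramér--von-Mises-type random variables with continuous, strictly increasing distribution functions on their supports. Fixing $B$ and letting $n\to\infty$, the portmanteau theorem (applied to the $\Prob$-a.s.\ continuous map sending a vector of length $B+1$ to the indicator that its first coordinate exceeds the empirical $(1-\alpha)$-quantile of its last $B$ coordinates) gives $\Prob(\varphi_{n,B,S}(\alpha)=1) \to \Prob(S > \tilde q_{B,S}(1-\alpha))$, where $\tilde q_{B,S}(1-\alpha)$ denotes the empirical $(1-\alpha)$-quantile of $S^{(1)},\ldots,S^{(B)}$. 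Subsequently letting $B\to\infty$, Glivenko--Cantelli and continuity of the CDF of $S$ imply $\tilde q_{B,S}(1-\alpha)\to q_S(1-\alpha)$ almost surely, where $q_S(1-\alpha)$ is the true $(1-\alpha)$-quantile of $S$; dominated convergence then yields the limit $\alpha$. The argument for $\varphi_{n,B,T}$ is identical.

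For the power under $H_1$, decompose $\mathbb{C}_n(s)=\sqrt{k}\{\hat C_n(s)-C(s)\}+\sqrt{k}\{C(s)-s\}$. Theorem~\ref{thm:integr_scedasis} shows the first summand is $O_\Prob(1)$ uniformly in $s$. Since $H_1$ provides some $s_0\in[0,1]$ with $C(s_0)\ne s_0$, we deduce $S_{n,1}\ge |\mathbb{C}_n(s_0)|\pto \infty$; moreover, by continuity of $C-\mathrm{id}$ one has $\kappa:=\int_0^1 (C(s)-s)^2\,\mathrm{d}s>0$, and expanding the square gives $k^{-1}T_{n,1}\pto \kappa>0$, so $T_{n,1}\pto\infty$. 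On the other hand, Theorem~\ref{thm:boot_jointconv} is valid without assuming $H_0$, so $(\Cxi^{(1)},\ldots,\Cxi^{(B)})$ converges weakly to $B$ independent copies of the tight Gaussian process $\mathbb{C}$. Continuous mapping then implies $\hat q_{n,B,S}(1-\alpha)=O_\Prob(1)$ and $\hat q_{n,B,T}(1-\alpha)=O_\Prob(1)$ for every fixed $B\in\N$, whence $\Prob(\varphi_{n,B,S}(\alpha)=1),\Prob(\varphi_{n,B,T}(\alpha)=1)\to 1$.

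The main obstacle is the rigorous handling of the double limit $n,B\to\infty$ and the transfer from weak convergence of the joint bootstrap vector to convergence of empirical quantiles. Both issues are resolved by the continuity of the limiting distributions of $\|\sigma\mathbb{B}\|_\infty$ and $\sigma^2\int_0^1 \mathbb{B}(s)^2\,\mathrm{d}s$ at their $(1-\alpha)$-quantiles, which is a classical fact and makes the relevant quantile functionals continuous almost surely under the limit measure; everything else is bookkeeping.
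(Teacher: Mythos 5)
Your argument is correct and follows the same basic route as the paper: Theorem~\ref{thm:boot_jointconv} plus the continuous mapping theorem for the joint convergence, continuity of the laws of $\|\sigma\mathbb{B}\|_\infty$ and $\sigma^2\int_0^1\mathbb{B}(s)^2\,\mathrm ds$ under $H_0$, and divergence of $S_{n,1}$, $T_{n,1}$ versus boundedness in probability of the bootstrap quantiles under $H_1$. The differences are worth noting. For the level, the paper does not hand-roll the passage from joint weak convergence to consistency of the empirical bootstrap quantiles; it simply checks the hypotheses of Lemma~4.2 in \cite{BucKoj19} (continuity of the limit law, i.i.d.\ multiplier blocks) and cites that lemma. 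Your portmanteau-then-Glivenko--Cantelli argument reproves essentially that lemma, but it only delivers the iterated limit $\lim_{B\to\infty}\lim_{n\to\infty}$; the statement $\lim_{n,B\to\infty}$ is naturally read as the joint (Moore--Osgood-type) limit, which is what the cited lemma provides. Upgrading your argument to the joint limit requires a uniformity step that you do not supply, so either add it or cite the lemma as the paper does. For the power, the paper bounds the bootstrap statistics crudely by $S_{n,1}^{(b)}=O_\Prob(r^{1/2})$ and concludes via $r=o(k)$ from Condition~\ref{cond:mixing}, whereas you invoke the full strength of Theorem~\ref{thm:boot_jointconv} under $H_1$ (which is indeed stated without assuming $H_0$) to get the sharper bound $\hat q_{n,B,S}(1-\alpha)=O_\Prob(1)$; both are valid, and yours is marginally cleaner. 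Your identification of the divergence rates of $S_{n,1}$ and $T_{n,1}$ via the decomposition $\mathbb{C}_n=\sqrt k\{\hat C_n-C\}+\sqrt k\{C-\mathrm{id}\}$ matches the paper's computation $k^{-1/2}S_{n,1}\pto\sup_s|C(s)-s|>0$.
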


Next, we introduce tests based on the concept of self-normalization. The basic idea is to consider the quotient of two statistics, such that the unknown variance factor $\sigma$ in (\ref{brownianbridge}) cancels out. To do this, we take two of the bootstrap-quantities from Theorem \ref{thm:boot_jointconv}, and define
\[ 
S_{n,2} = \frac{\|\mathbb{C}_n\|_{\infty}}{\|\mathbb C_{n,\xi}^{(1)} - \mathbb C_{n,\xi}^{(2)}\|_{\infty}}, \quad T_{n,2} = \frac{\int_0^1 \mathbb{C}_n^2(s) \ \mathrm{d}s}{\int_0^1 \big( \mathbb C_{n,\xi}^{(1)}(s)- \mathbb C_{n,\xi}^{(2)}(s) \big)^2 \ \mathrm{d}s}. 
\]
By Theorem \ref{thm:boot_jointconv} we know that under $H_0$, as $ n \to \infty$,
\begin{align*}
 S_{n,2} \wto S_2 := \frac{\|\mathbb{B}\|_{\infty}}{\|\mathbb{B}^{(1)}-\mathbb{B}^{(2)}\|_{\infty}}, \quad T_{n,2} \wto T_2 := \frac{\int_0^1 \mathbb{B}(s)^2 \ \mathrm{d}s}{\int_0^1 \big( \mathbb{B}^{(1)}(s)-\mathbb{B}^{(2)}(s) \big)^2 \ \mathrm{d}s},
\end{align*}
where $\mathbb{B}, \ \mathbb{B}^{(1)}$ and $\mathbb{B}^{(2)}$ are independent Brownian Bridges on $[0,1]$.
For $\alpha\in(0,1)$, let $q_S(1-\alpha)$ and $q_T(1-\alpha)$ be the $(1-\alpha)$-quantile of $S_2$ and $T_2$, respectively. The corresponding test procedures are given by
\[ 
\varphi_{n,S}(\alpha) = \I \big( S_{n,2} > q_S(1-\alpha) \big), \quad \varphi_{n,T}(\alpha) = \I \big( T_{n,2} > q_T(1-\alpha) \big).
\]

\begin{corollary} \label{cor:test_norm}
 Suppose that Conditions \ref{cond:basic}-\ref{cond:2nd order} hold for $L=1$ and that $k$ satisfies $\sqrt{k}A(\tfrac{n}{k}) \to 0$. Let $\alpha \in (0,1)$. Then,  if $H_0: c \equiv 1$ is met,
 \[ 
 \lim_{n \to \infty} \Prob(\varphi_{n,S}(\alpha)=1) = \alpha, \quad \lim_{n \to \infty} \Prob(\varphi_{n,T}(\alpha)=1)=\alpha
 \]
 Further, if $H_1: c \not \equiv 1$ is met, then
 \[ \lim_{n \to \infty} \Prob_{H_1} (\varphi_{n,S}(\alpha) = 1) = 1, \quad \lim_{n \to \infty} \Prob (\varphi_{n,T}(\alpha) = 1) = 1. 
 \]
\end{corollary}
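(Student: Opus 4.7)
The strategy is to combine Theorem~\ref{thm:boot_jointconv} (applied with $B=2$) with the continuous mapping theorem. Under $H_0$ the unknown scale $\sigma$ appears as a common multiplicative factor in both numerator and denominator of $S_{n,2}$ and $T_{n,2}$ and therefore cancels, producing pivotal limits. Under $H_1$ the numerators diverge at rate $\sqrt k$ (respectively $k$ for the squared-integral version) while the bootstrap-based denominators remain $O_\Prob(1)$, which yields consistency.

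For the level claim, note that $H_0$ forces $C(s)=s$, so the limit $\mathbb{C}(s)=\mathbb{S}(s,1)-C(s)\mathbb{S}(1,1)$ reduces to $\sigma\,\mathbb{B}(s)$ as recorded in~\eqref{brownianbridge}, with $\sigma>0$ since $d_0(1,1)=\Prob(Y_0>1)=1$. Theorem~\ref{thm:boot_jointconv} with $B=2$ then gives
\[
  \bigl(\mathbb{C}_n,\,\mathbb{C}_{n,\xi}^{(1)},\,\mathbb{C}_{n,\xi}^{(2)}\bigr) \wto \bigl(\sigma\mathbb{B},\,\sigma\mathbb{B}^{(1)},\,\sigma\mathbb{B}^{(2)}\bigr) \quad \text{in } (\ell^\infty([0,1]))^3,
\]
with $\mathbb{B},\mathbb{B}^{(1)},\mathbb{B}^{(2)}$ independent Brownian bridges. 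The functionals $(f,g,h)\mapsto\|f\|_\infty/\|g-h\|_\infty$ and $(f,g,h)\mapsto \int_0^1 f^2/\int_0^1(g-h)^2$ are continuous at any triple whose denominator is nonzero, and the limit triple satisfies this condition almost surely, since $\sigma(\mathbb{B}^{(1)}-\mathbb{B}^{(2)})$ is distributed as $\sqrt 2\,\sigma\,\mathbb{B}$ and hence has a.s.\ strictly positive sup- and $L^2$-norm. Applying the continuous mapping theorem, the $\sigma$'s cancel and one obtains $S_{n,2}\wto S_2$ and $T_{n,2}\wto T_2$. Because $S_2$ and $T_2$ are ratios of independent, absolutely continuous, a.s.\ positive functionals of Gaussian processes, their laws are atomless, so $q_S(1-\alpha)$ and $q_T(1-\alpha)$ are continuity points of the corresponding c.d.f.s, and the portmanteau theorem delivers the asymptotic level $\alpha$.

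For consistency under $H_1$, I would decompose
\[
  \mathbb{C}_n(s) = \sqrt{k}\bigl\{\hat C_n(s)-C(s)\bigr\} + \sqrt{k}\bigl\{C(s)-s\bigr\}.
\]
By Theorem~\ref{thm:integr_scedasis}, the first summand is asymptotically tight in $\ell^\infty([0,1])$, hence $O_\Prob(1)$ uniformly. Under $H_1$ there exists $s_0 \in [0,1]$ with $C(s_0)\neq s_0$, so $|\mathbb{C}_n(s_0)|\ge \sqrt{k}\,|C(s_0)-s_0|-O_\Prob(1)\to\infty$ in probability, giving $\|\mathbb{C}_n\|_\infty\to\infty$. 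Squaring the decomposition and applying Cauchy--Schwarz yields
\[
  \int_0^1 \mathbb{C}_n(s)^2\,\mathrm{d}s = k\int_0^1 (C(s)-s)^2\,\mathrm{d}s \cdot\bigl(1+o_\Prob(1)\bigr),
\]
where $\int_0^1(C(s)-s)^2\,\mathrm{d}s>0$ by continuity of $c$ together with $H_1$. Together with $O_\Prob(1)$-bounds on the denominators $\|\mathbb{C}_{n,\xi}^{(1)}-\mathbb{C}_{n,\xi}^{(2)}\|_\infty$ and $\int_0^1(\mathbb{C}_{n,\xi}^{(1)}-\mathbb{C}_{n,\xi}^{(2)})^2\,\mathrm{d}s$, this forces $S_{n,2},T_{n,2}\to\infty$ in probability, so the tests reject with probability tending to one.

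The main obstacle I anticipate is the $H_1$-analysis of the bootstrap denominators. Theorem~\ref{thm:boot_jointconv} is phrased in a form most transparent under $H_0$, but the weak convergence of the copies $\mathbb{C}_{n,\xi}^{(b)}$ should not actually depend on $C=\mathrm{id}$: the centering $\xi_j^{(b)}-\bar\xi^{(b)}$ turns $\mathbb{D}_{n,\xi}^{(b)}$ into a genuinely centered multiplier-type process whose convergence relies only on the empirical structure of the $Y_{n,j}$'s and on Conditions~\ref{cond:basic}--\ref{cond:2nd order}, not on the form of $c$. I would verify by inspection of the proof of Theorem~\ref{thm:boot_jointconv} that $(\mathbb{C}_{n,\xi}^{(1)},\mathbb{C}_{n,\xi}^{(2)})\wto(\mathbb{C}^{(1)},\mathbb{C}^{(2)})$ persists under $H_1$, which via the continuous mapping theorem then yields the required $O_\Prob(1)$ bounds on the self-normalizing denominators and completes the consistency argument.
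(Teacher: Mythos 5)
Your proof is correct and follows the same route as the paper, whose own proof merely states that the level claim is an immediate consequence of Theorem~\ref{thm:boot_jointconv} and that the divergence of $S_{n,2}$ and $T_{n,2}$ under $H_1$ is easy to show; your write-up supplies exactly the details that are omitted there. The verification you flag at the end is unnecessary: Theorem~\ref{thm:boot_jointconv} is stated under Conditions~\ref{cond:basic}--\ref{cond:2nd order} alone, with limit $\mathbb{C}(s)=\mathbb{S}(s,1)-C(s)\mathbb{S}(1,1)$ for a general integrated scedasis $C$, so the $O_\Prob(1)$ bounds on the self-normalizing denominators under $H_1$ are already contained in its statement.
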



\section{Assessing the serial dependence} \label{sec:EI}

Within our basic model described in the introduction, the dynamics of the time series extremes are governed by the stationary time series $(Z_t)_{t \in \Z}$ from Condition \ref{cond:reg}. There are many interesting statistical problems related to those dynamics which are worth to be investigated like, e.g., estimating the distribution of the tail process (see \citealp{Dav18} for stationary observations) or estimation of general cluster functionals  (see Section~10 in \citealp{KulSou20} for stationary observations). Throughout, we restrict attention to estimating  the extremal index~$\theta$, which may be regarded as the most traditional parameter associated with the serial dependence.

Recall that the extremal index $\theta \in (0,1]$ of $(Z_t)_t$ exists iff the same is true for $(U_t)_t$ (in that case, the indices are equal), and that the latter requires that, for any $\tau > 0$, there exists a sequence $(u_n(\tau))_{n \in \N}$ such that $\lim_{n \to \infty} n\{ 1- u_n(\tau) \} = \tau$ and 
\begin{align}
	\lim_{n \to \infty} \Prob \big( \max_{1 \leq i \leq n} U_i \leq u_n(\tau) \big) = e^{-\theta \tau}. \label{extremal_index}
\end{align}
One can further show that, if the extremal index exists, then (\ref{extremal_index}) holds for any sequence $(u_n(\tau))_n$ with $\lim_{n \to \infty} n \{ 1-u_n(\tau) \} = \tau$. Subsequently, we choose $u_n(\tau) = 1-\tau/n$.

For estimating $\theta$, we divide the finite stretch of observations $X_1^{\scs (n)}, \ldots, X_n^{\scs (n)}$ into non-overlapping successive blocks of size $\bs=\bs_n$, i.e., into blocks 
\begin{align*} 
I_j' = \{ (j-1)\bs+1, \ldots, j\bs\}, \quad j=1, \ldots, k', 
\end{align*}
where $k'=\lfloor n/\bs \rfloor$.
For $j=1,\ldots,k'$, set 
\begin{align} \label{eq:znj}
Z_{n,j} = \bs \big\{ 1-\max_{i \in I_j'} F(X_i^{(n)}) \big \}, \quad 
\hat Z_{n,j} = \bs \big\{ 1-\max_{i \in I_j'} \hat{F}_n(X_i^{(n)}) \big\}, 
\end{align}
where $\hat F_n(x) = n^{-1} \sum_{i=1}^{n} \I(X_i^{(n)} \leq x)$ denotes the empirical c.d.f. of $X_1^{(n)}, \ldots, X_n^{(n)}$.  Note that, in view of \eqref{eq:common F}, for sufficiently large $x\in\R$,
\[
\Exp[1-\hat F_n(x)] = \frac1n \sum_{i=1}^n \{1-F_{n,i}(x)\}
=
 \{1-F(x)\} \frac1n \sum_{i=1}^n \{ c(i/n) +o(1) \} \approx 1-F(x)
\]
(ignoring the possible non-uniformity in \eqref{eq:common F} for the moment),
whence $\hat Z_{n,j}$ can be regarded as an observable counterpart of $Z_{n,j}$.

In the following, we will show that $Z_{n,1+\lfloor\xi k'\rfloor}$, $\xi \in [0,1)$, asymptotically follows an exponential distribution with parameter depending on $\theta$, this result being the basis for our estimation procedure for $\theta$, see Lemma \ref{Zni_exp}. To prove this, we impose the subsequent conditions.

\begin{compactenum}[(B10)]
	\renewcommand{\theenumi}{(B8)}
	\renewcommand{\labelenumi}{\theenumi}
	\item 
	\label{cond:ei} \textbf{Extremal Index.} 
	The stationary time series $(U_t)_{t \in \Z}$ from Condition \ref{cond:reg} is assumed to have an extremal index $\theta \in (0,1]$.
	\renewcommand{\theenumi}{(B9)}
	\renewcommand{\labelenumi}{\theenumi}
	\item 
	\label{cond:blocksize} \textbf{Blocking sequences and mixing.}
	The blocksize $\bs$ is chosen in such a way that it satisfies $\bs = o(\sqrt{n})$ and $n \beta(\bs) = o(\bs)$ as $n \to \infty$.
	\renewcommand{\theenumi}{(B10)}
	\renewcommand{\labelenumi}{\theenumi}
	\item \label{cond:uniform integr}  \textbf{Uniform integrability.}
	For some $\delta_1 > 0$,
	\[ 
	\limsup_{n \to \infty} \sup_{\xi \in (0,1)} \Exp \Big[ \big| Z_{n,1+\lfloor \xi k' \rfloor} \big|^{2 + \delta_1} \Big] < \infty. 
	\]
\end{compactenum}	

Condition \ref{cond:uniform integr} is imposed to deduce uniform integrability of the $Z_{n,1+\lfloor \xi k' \rfloor}^2$; it will imply convergence of the corresponding first and second moments. 

\begin{remark}
We exemplarily show that the above conditions hold for the location-scale model from Example \ref{ex:model} with $(W_t)_{t \in \Z}$ chosen as the max-autoregressive process defined in (\ref{def_ARMAX}). 
First, the process $(W_t)_{t \in \Z}$ has an extremal index $\theta$ given by $\theta=1-\lambda$ \citep[Chapter 10]{BeiGoeSegTeu04}, such that $(U_t)_{t \in \Z}$ also has extremal index $\theta$ and Condition \ref{cond:ei} holds.
Further, by \cite{BerBuc18}, page 2322, $(W_t)_{t \in \Z}$, and hence also $(U_t)_{t \in \Z}$, is geometrically $\beta$-mixing, whence Condition \ref{cond:blocksize} is fulfilled for appropriate choice of $\bs$.
Regarding Condition \ref{cond:uniform integr}, we have, for $j \in \N$,
\begin{align}
 Z_{n,j} 
 &=  \nonumber
 \bar Z_{n,j} \frac{1-\max_{i \in I_j'}F(X_i^{(n)})}{1-\max_{i \in I_j'}F_{n,i}(X_i^{(n)})} \\
 &\leq 
 \bar Z_{n,j} \frac{1-\exp\big( - (c_{\textrm{min}} \max_{i \in I_j'} W_i +  \inf_{s \in [0,1]}\mu(s))^{-1}\big)}{1-\exp\big( -(\max_{i \in I_j'} W_i )^{-1}\big)}, \label{Z_and_bar_Z}
\end{align}
where $\bar Z_{n,j} =\bs \big\{ 1-\max_{i \in I_j'} U_i \big \}$. Note that the distribution of the right-hand side in the last display is independent of $j \in \N$. By induction, 
$
\Prob \big( \max_{i=1,\ldots,b} W_i \leq x \big) = F(x)^{1+\theta(b-1)} = \exp (- \{ 1+\theta(b-1) \}/x)$ for $x>0, b \in \N, 
$
such that $\max_{i=1,\ldots,\bs} W_i$ converges to $\infty$ in probability. Therefore, any absolute moment of the second factor of the right-hand side in (\ref{Z_and_bar_Z}) converges. Further, it is shown in Example~6.1 in \cite{BerBuc17}, see the proof of their Condition 2.1(vi) holds, that 
$
\limsup_{n \to \infty} \Exp \big[ \bar Z_{n,1}^{\delta'} \big] < \infty 
$
for any $\delta' > 0$. Along with inequality (\ref{Z_and_bar_Z}), Hölder's inequality implies that Condition \ref{cond:uniform integr} holds.
\end{remark}

\begin{lemma} \label{Zni_exp}
Fix $ \xi \in [0,1)$. Suppose that Conditions \ref{cond:basic}-\ref{cond:sbias}, \ref{cond:2nd order} and \ref{cond:ei}-\ref{cond:blocksize} hold. Then, $Z_{n,1+ \lfloor \xi k' \rfloor} \wto \mathrm{Exp}(\theta c(\xi))$ as $ n \to \infty$. 
\end{lemma}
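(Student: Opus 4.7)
The plan is to show, for every fixed $z > 0$, that
\[
\Prob(Z_{n,j_n} > z) \to e^{-\theta c(\xi) z} \quad \text{as } n \to \infty,
\]
where $j_n = 1 + \lfloor \xi k' \rfloor$; since the right-hand side is the survival function of $\mathrm{Exp}(\theta c(\xi))$ at $z$, this yields the claimed weak convergence. Fix $z > 0$. The first step will be to rewrite the event $\{Z_{n,j_n} > z\}$ in terms of the latent uniform variables $U_i^{(n)}$. Using continuity and strict monotonicity of $F$ and continuity of each $F_{n,i}$, we have up to a null set
\[
\{Z_{n,j_n} > z\} = \big\{ X_i^{(n)} \leq F^{-1}(1-z/\bs) \text{ for all } i \in I_{j_n}' \big\} = \big\{ U_i^{(n)} \leq v_{n,i} \text{ for all } i \in I_{j_n}' \big\},
\]
where $v_{n,i} := F_{n,i}(F^{-1}(1-z/\bs))$.

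Next, I would apply Condition~\ref{cond:2nd order} at $x_n = F^{-1}(1-z/\bs)$, which satisfies $1/\{1-F(x_n)\} = \bs/z \to \infty$, to obtain
\[
\bs(1-v_{n,i}) = c(i/n) z + O\!\big(A(\bs/z)\big) \cdot z = c(i/n) z + o(1)
\]
uniformly in $n$ and $i$. Because Condition~\ref{cond:blocksize} forces $\bs = o(\sqrt n)$, one has $|i/n - \xi| = O(\bs/n) = o(1)$ uniformly over $i \in I_{j_n}'$, and continuity of $c$ (from Condition~\ref{cond:sbias}) then gives $c(i/n) \to c(\xi)$ uniformly. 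Combining, for every $\eps > 0$ and all sufficiently large $n$,
\[
1 - (c(\xi) + \eps)\tfrac{z}{\bs} \leq v_{n,i} \leq 1 - (c(\xi) - \eps)\tfrac{z}{\bs} \qquad \forall\, i \in I_{j_n}'.
\]

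The final step will exploit the stationarity from Condition~\ref{cond:reg}: the joint law of $(U_i^{(n)})_{i \in I_{j_n}'}$ equals that of $(U_1, \ldots, U_\bs)$ from the generic stationary series. Sandwiching, $\Prob(Z_{n,j_n} > z)$ is squeezed between the two quantities $\Prob(\max_{1 \leq i \leq \bs} U_i \leq 1 - (c(\xi) \pm \eps) z/\bs)$. Since $\bs \to \infty$ and $\bs\{1 - (1 - \tau/\bs)\} = \tau$ with $\tau = (c(\xi) \pm \eps) z$, the extremal index property (Condition~\ref{cond:ei}), applied along the divergent sequence $\bs_n$ in place of $n$ (permitted by the remark following (\ref{extremal_index})), shows both bracketing probabilities converge to $\exp(-\theta (c(\xi) \pm \eps) z)$. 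Letting $\eps \to 0$ finishes the argument.

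The main technical hurdle will be to make the second-order approximation uniform in $i \in I_{j_n}'$ at the scale $1/\bs$ and to cope with the fact that the effective thresholds $v_{n,i}$ vary with $i$ through $c(i/n)$; the two-sided sandwich described above is the natural device for this, and once it is in place the extremal index hypothesis delivers the conclusion directly, with no further mixing input needed beyond the divergence $\bs \to \infty$ guaranteed by Condition~\ref{cond:blocksize}.
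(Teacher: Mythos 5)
Your argument is correct, and its overall skeleton is the same as the paper's: rewrite $\{Z_{n,1+\lfloor\xi k'\rfloor}>z\}$ as an event about the block of latent uniforms, use the second-order condition \ref{cond:2nd order} to replace the $i$-dependent thresholds $F_{n,i}(F^{-1}(1-z/\bs))$ by $1-c(\xi)z/\bs$, and then invoke the extremal-index property \eqref{extremal_index} along the diverging block lengths $\bs$. The one place where you genuinely diverge is the threshold-replacement step. The paper handles it in the supplement (Lemma~\ref{Bn-Cn} and Corollary~\ref{cor1}) by comparing the exceedance \emph{counts} $A_n,B_n,C_n$ over the block in $L^1$: the expected absolute differences are computed exactly using the Pareto law of $Z_i$, and the Hölder continuity of $c$ from \ref{cond:sbias} together with $\bs=o(\sqrt n)$ makes the summed error $\tfrac{x}{\bs}\sum_{i}|c(i/n)-c(\xi)|$ vanish; closeness of the counts then forces closeness of the probabilities $\Prob(A_n=0)$ and $\Prob(C_n=0)$. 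You instead use a monotone two-sided sandwich of the events themselves, which requires only that $\sup_{i\in I'_{1+\lfloor\xi k'\rfloor}}|c(i/n)-c(\xi)|\to 0$ (mere continuity of $c$ plus $|i/n-\xi|=O(\bs/n)$) and the uniformity in \ref{cond:2nd order}, at the cost of an extra $\eps$-limit at the end. Both routes are valid; yours is arguably more elementary and makes transparent that neither mixing nor the quantitative Hölder rate is needed for this lemma, while the paper's $L^1$ count comparison is reused elsewhere (e.g.\ in the consistency proof of $\hat\theta_n$), which is presumably why it is packaged as a separate lemma.
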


This result motivates estimators based on the method of moments, see \cite{Nor15, BerBuc18} for the stationary case. Consider the (unobservable) random variable
\[ T_{n} = \frac{1}{k'} \sum_{j=1}^{k'} Z_{n,j}. \]
Then, for
$
\varphi_n: [0,1] \to \R, \varphi_n(\xi) = \sum_{j=1}^{k'} \Exp \big[Z_{n,j}\big] \I \big( \xi \in \big[ \tfrac{j-1}{k'}, \tfrac{j}{k'} \big) \big), 
$
we obtain 
\[ 
\Exp[T_{n}] = \frac{1}{k'} \sum_{j=1}^{k'} \Exp\big[Z_{n,j}\big] = \int_{0}^{1} \varphi_n(\xi) \ \mathrm{d}\xi. 
\]
By Condition \ref{cond:uniform integr} and Lemma \ref{Zni_exp}, for any fixed $\xi\in[0,1)$, we have 
$
\varphi_n(\xi) = \Exp \big[ Z_{n,1+\lfloor \xi k' \rfloor} \big] \to \Exp [V_\xi],  n \to \infty, 
$
where $V_{\xi}  \sim \textsf{Exp}(\theta c(\xi))$. Since $\sup_{n \in \N} \|\varphi_n\|_{\infty} < \infty$ by Condition \ref{cond:uniform integr}, the dominated convergence theorem implies
\begin{align}
\Exp \big[ T_{n} \big] = \int_{0}^{1} \varphi_n(\xi) \ \mathrm{d}\xi \to \int_{0}^{1} \Exp \big[ V_{\xi} \big] \ \mathrm{d}\xi = \frac{1}{\theta} \int_{0}^{1} \! \frac{1}{c(\xi)} \ \mathrm{d}\xi. \nonumber 
\end{align}
\ \\
Recall that the function $c$ is positive and continuous on $[0,1]$; thus there is a positive number $c_{\min}$ such that $c(s) > c_{\min}$ for all $s \in [0,1]$. Therefore, it is advisable to also truncate $\tilde c_n$ from below, say by considering $\hat{c}_n = \max(\tilde{c}_n, \kappa)$ with some small, positive constant $\kappa>0$. Subsequently, we assume that $0<\kappa<c_{\min}$.
Now, let us estimate $\tau = \int_{0}^{1} c(\xi)^{-1} \ \mathrm{d}\xi$ by $\hat{\tau}_n = \int_{0}^{1} \hat{c}_n(\xi)^{-1} \ \mathrm{d}\xi$. 
Since $\Exp[T_{n}] \to \theta^{-1} \tau$, a sensible, observable method of moments estimator for $\theta$ is given by 
\begin{align*}
	\hat{\theta}_{n} = \hat T_n^{-1} \hat{\tau}_n, \quad \textrm{where} \quad \hat T_n = \frac{1}{k'} \sum_{j=1}^{k'} \hat Z_{n,j}.
\end{align*}
The subsequent theorem yields concistency of this estimator; its finite-sample properties are studied in Section~\ref{sec:sim}.

\begin{theorem} \label{thm:cons_EI}
	Suppose that Conditions \ref{cond:basic}-\ref{cond:sbias} hold. Assume $c \in C^2([0,1])$ and let the function $K$ in the definition of $\hat{c}_n$ be Lipschitz-continuous.
	\begin{compactenum}
		\item[(a)] If additionally Conditions \ref{cond:mixing}-\ref{cond:bandwidth} hold for $L=2$ and if $k$ satisfies $\sqrt{k}A(\tfrac{n}{2k}) \to 0$, then $\hat \tau_n =\tau+\op$ as $n \to \infty$.
		\item[(b)] If additionally Conditions \ref{cond:mixing}-\ref{cond:2nd order} hold for $k=k'$ and for all $L \in \N$, and if $k'$ satisfies $\sqrt{k'}A(\tfrac{n}{Lk'}) \to 0$ for all $L\in\N$, and if Conditions \ref{cond:ei}-\ref{cond:uniform integr} hold, then $\hat T_n =\theta^{-1} \tau+\op$ as $n \to \infty$. 
	\end{compactenum}
	In particular, if all of the above conditions are met, then $\hat \theta_{n} \pto \theta$ as $n \to \infty$. 
	\end{theorem}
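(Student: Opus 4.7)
The conclusion $\hat\theta_n \pto \theta$ follows immediately from (a) and (b) via the continuous mapping theorem applied to $\hat\tau_n / \hat T_n$, since $\tau$ and $\theta^{-1}\tau$ are deterministic and strictly positive. I therefore concentrate on parts (a) and (b).

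\emph{Part (a).} My plan is to pass from the pointwise convergence of $\tilde c_n$ to an integrated convergence of $\hat c_n^{-1}$ via bounded/dominated convergence. Theorem~\ref{thm:scedasis} gives $\tilde c_n(s) \pto c(s)$ for each fixed $s \in [0,1]$; since $\kappa < c_{\min} \le c(s)$, this yields $\hat c_n(s) = \max(\tilde c_n(s), \kappa) \pto c(s)$ and hence $\hat c_n(s)^{-1} \pto c(s)^{-1}$ pointwise. Because $|\hat c_n(s)^{-1} - c(s)^{-1}| \le \kappa^{-1} + c_{\min}^{-1}$, bounded convergence upgrades this to $\E|\hat c_n(s)^{-1} - c(s)^{-1}| \to 0$ for every $s$; then Fubini together with a second application of dominated convergence on $s \in [0,1]$ gives $\E|\hat\tau_n - \tau| \le \int_0^1 \E|\hat c_n(s)^{-1} - c(s)^{-1}|\,ds \to 0$, so $\hat\tau_n \pto \tau$.

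\emph{Part (b).} I decompose $\hat T_n = T_n + (\hat T_n - T_n)$ with $T_n = \frac{1}{k'}\sum_{j=1}^{k'} Z_{n,j}$ and treat the two summands separately. The preamble of the theorem already shows $\E[T_n] \to \theta^{-1}\tau$, so $T_n \pto \theta^{-1}\tau$ follows from $\Var(T_n) \to 0$ via Chebyshev. The diagonal contribution to $\Var(T_n)$ is $O(1/k')$ by the $L^{2+\delta_1}$ bound from Condition~\ref{cond:uniform integr}; adjacent blocks contribute $O(1/k')$ by Cauchy--Schwarz; and for $|j'-j|\ge 2$ the two blocks are separated by a gap $\ge q$, so a Davydov/Rio-type covariance inequality for $\beta$-mixing sequences gives $|\Cov(Z_{n,j},Z_{n,j'})| \lesssim \beta((|j'-j|-1)q)^{\delta_1/(2+\delta_1)}$, whose total contribution is $o(1)$ since $\beta(q) = o(1/k')$ by Condition~\ref{cond:blocksize} and $\beta$ is monotone. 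For $\hat T_n - T_n \pto 0$, monotonicity of $F$ and $\hat F_n$ gives $\hat T_n - T_n = \frac{q}{k'}\sum_{j=1}^{k'} [F(M_j) - \hat F_n(M_j)]$ with $M_j = \max_{i\in I_j'} X_i^{(n)}$, and I split $\hat F_n - F = (\hat F_n - \bar F_n) + (\bar F_n - F)$ with $\bar F_n(x) = \frac{1}{n}\sum_i F_{n,i}(x)$. The stochastic fluctuation is controlled by the Donsker-type bound $\|\hat F_n - \bar F_n\|_\infty = O_P(n^{-1/2})$ (coming out of the STEP machinery of Section~\ref{sec:sstep}), giving a contribution of order $q/\sqrt{n} = \op$ by Condition~\ref{cond:blocksize}; and for the deterministic part, Condition~\ref{cond:2nd order} together with the Hölder-$1/2$ Riemann-sum estimate $|\bar c_n - 1| = O(n^{-1/2})$ (Condition~\ref{cond:sbias}) yields $q|F(M_j) - \bar F_n(M_j)| \lesssim Z_{n,j}[n^{-1/2} + A(q/Z_{n,j})]$.

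The main obstacle is the last term, which involves $A$ evaluated at the \emph{random} argument $q/Z_{n,j}$. I propose to handle it by truncation at a level $M>0$: on $\{Z_{n,j}\le M\}$, the eventual monotonicity of $A$ gives $A(q/Z_{n,j}) \le A(q/M) \to 0$ for each fixed $M$, so the contribution is bounded by $A(q/M)\,T_n = \op$; on $\{Z_{n,j}>M\}$, the expectation is dominated by $\|A\|_\infty\,\sup_j \E[Z_{n,j}\I(Z_{n,j}>M)] \le \|A\|_\infty\,M^{-(1+\delta_1)}\sup_j\E[Z_{n,j}^{2+\delta_1}]$, which vanishes as $M\to\infty$ by Condition~\ref{cond:uniform integr}. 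Letting $M = M_n \to \infty$ sufficiently slowly closes the argument and yields $\hat T_n \pto \theta^{-1}\tau$, completing the proof.
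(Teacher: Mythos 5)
Parts (a) and the convergence $T_n\pto\theta^{-1}\tau$ in (b) are essentially correct. Your (a) is the paper's argument with the mean/variance computation replaced by a direct $L^1$ bound; your variance bound for $T_n$ uses a Davydov-type covariance inequality where the paper couples the even/odd blocks to independent copies via $k'\beta(\bs)=o(1)$ — both routes work under \ref{cond:blocksize} and \ref{cond:uniform integr}. Your identity $\hat T_n-T_n=\frac{\bs}{k'}\sum_j\{F(M_j)-\hat F_n(M_j)\}$ and your treatment of the deterministic bias $\bar F_n-F$ (Hölder Riemann sum plus the truncation handling $A(\bs/Z_{n,j})$ via eventual monotonicity and \ref{cond:uniform integr}) are also sound.

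The genuine gap is the stochastic term: you assert $\|\hat F_n-\bar F_n\|_\infty=O_\Prob(n^{-1/2})$ and attribute it to ``the STEP machinery of Section \ref{sec:sstep}.'' That machinery does not deliver this. Propositions \ref{prop:sstep}--\ref{prop:step} control the \emph{tail} empirical process, normalized by $\sqrt{k'}$ (not $\sqrt{n}$) and only on the region $\{x: 1-F(x)\le Lk'/n\}$ for a \emph{fixed} $L$; a uniform $\sqrt{n}$-Donsker theorem for this non-stationary, serially dependent triangular array is nowhere established, and the mixing assumptions \ref{cond:mixing}, \ref{cond:blocksize} (which constrain $\beta$ only along the sequences $\ell_n$, $r$, $\bs$) do not obviously imply one. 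The evaluation points matter here: $1-F(M_j)=Z_{n,j}/\bs$, so blocks with $Z_{n,j}\le T$ have $M_j$ inside the STEP region with $k=k'$ and $x\le T$, where the fluctuation is $\hat Z_{n,j}-Z_{n,j}=(k')^{-1/2}\mathbb F_n(1,Z_{n,j})=O_\Prob((k')^{-1/2})$ uniformly — this is how the paper handles them (its $S_{n1}(T)$, via Proposition \ref{prop:step} with $k=k'$ and an integration-by-parts lemma). But blocks with $Z_{n,j}>T$ fall \emph{outside} every compact $[0,T]$, and your truncation only disposes of the $A(\bs/Z_{n,j})$ bias there, not of $\hat F_n(M_j)-\bar F_n(M_j)$. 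This residual term is exactly what the paper's decomposition $S_{n1}-S_{n1}(T)=A_{n,T}+B_{n,T}+C_{n,T}$ is for: it bounds $\Exp[|\varphi_{n,i-2}(Z_{n,i})|\I(Z_{n,i}\ge T)]$ by decoupling $Z_{n,i}$ from the other blocks with Berbee's lemma, bounding the conditional expectation linearly in $z$ via Condition \ref{cond:2nd order}, and then invoking uniform integrability from \ref{cond:uniform integr} to let $T\to\infty$. Without either a proved global Donsker bound or an argument of this type for the large-$Z_{n,j}$ blocks, the proof of $S_{n1}=\op$ is incomplete.
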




\section{Weak convergence of the (simple) STEP} \label{sec:sstep}

Functional weak convergence of the subsequent processes will be essential for proving the asymptotic results in the previous sections. Precisely, we are interested in the simple sequential tail empirical process (simple STEP) $\mathbb{S}_n$ and the sequential tail empirical process (STEP) $\mathbb{F}_n$ defined as
\begin{align}
\mathbb{S}_n(s,x) 
&=\label{eq:defsstep}
\sqrt{k} \bigg\{ \frac{1}{k}\sum_{i=1}^{[ns]} \bm{1}\big\{ U_{i}^{(n)} > 1 - \tfrac{k}{n} c(\tfrac{i}{n}) x  \big\}  - xC(s)  \bigg\}, \\ 
\mathbb{F}_n(s,x)
&=  \label{eq:defstep}
\sqrt{k} \bigg\{ \frac{1}{k}\sum_{i=1}^{[ns]} \bm{1}\Big\{ X_{i}^{(n)} > V\Big(\frac{n}{kx}\Big)  \Big\}  - xC(s)  \bigg\},
\end{align}
where $(s,x) \in [0,1] \times [0,\infty)$ and where $V=(\tfrac{1}{1-F})^{-1}$.

\begin{proposition} \label{prop:sstep}
Suppose that Conditions~\ref{cond:basic}--\ref{cond:mixing} hold. Fix some constant $L\in\N$ and suppose that Conditions \ref{cond:expnum} and \ref{cond:hbound} hold for $L$. Then, as $n \to \infty$,
\[
\mathbb{S}_n \wto \mathbb{S} \quad \textrm{in} \quad (\ell^\infty([0,1] \times [0,L]), \| \cdot \|_\infty),
\]
where $\Sb$ denotes a tight, centered Gaussian process on $[0,1] \times [0,L]$ with covariance
$\mathfrak c((s,x),(s',x'))$ as defined in \eqref{eq:cov2}.
\end{proposition}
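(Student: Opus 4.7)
The plan is to extend the standard big-block/small-block treatment of the stationary tail empirical process under $\beta$-mixing (see \citealp{Dre00,DreRoo10,KulSou20}) to the present non-stationary sequential framework, handling the time-varying threshold $c(i/n)x$ via a local-constancy argument within blocks of size $r = o(n)$. As a preliminary reduction, I would replace the deterministic centering $xC(s)$ in the definition of $\mathbb{S}_n$ by its empirical version $\tfrac{x}{n}\sum_{i\le \lfloor ns\rfloor}c(i/n)$. Since each $U_i^{\scs(n)}$ is marginally uniform, this is exactly the mean of the random part once $n$ is large enough that $\tfrac{k}{n}\|c\|_\infty L \le 1$, and the bias $\sqrt{k}\, x\, |\tfrac{1}{n}\sum c(i/n) - C(s)|$ vanishes uniformly in $s$ by the Hölder statement recorded just after \ref{cond:sbias}. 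Hence it suffices to prove functional convergence of the resulting centered process $\tilde{\mathbb{S}}_n$.

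I would then partition $\{1,\ldots,n\}$ into $m = \lfloor n/r\rfloor$ blocks $I_j$ of length $r$ and detach a separating small sub-block of length $\ell_n$ at the end of each $I_j$, writing $\tilde{\mathbb{S}}_n(s,x) = \sum_{j=1}^{m}\zeta_{n,j}(s,x) + R_n(s,x)$, where $\zeta_{n,j}(s,x)$ is the centered contribution of the big sub-block of $I_j$ truncated to $i \le \lfloor ns\rfloor$. Condition \ref{cond:expnum} together with $\ell_n = o(r)$ and $r=o(n/k)$ bounds the $L^2$-norm of $R_n$ uniformly by $o(1)$. Berbee's coupling lemma, combined with $(n/r)\beta(\ell_n)=o(1)$ from \ref{cond:mixing}, then allows us to replace $(\zeta_{n,j})_j$ by row-wise independent copies $(\zeta_{n,j}^\ast)_j$ with matching marginals at cumulative total-variation cost $o(1)$. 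Finite-dimensional convergence of $\sum_j \zeta_{n,j}^\ast$ follows from Cramér–Wold and the Lindeberg CLT, where the Lindeberg condition is immediate from $|\zeta_{n,j}^\ast(s,x)| \le 2r/\sqrt k = o(1)$ via $r=o(\sqrt k)$. The limit covariance is computed by separating the diagonal $(i=j)$ and lag-$h\ge 1$ contributions. The diagonal yields $(x\wedge x')\cdot\tfrac{1}{n}\sum_{i\le\lfloor n(s\wedge s')\rfloor} c(i/n) \to C(s\wedge s')(x\wedge x') = C(s\wedge s')\,d_0(x,x')$. For each fixed lag $h\ge 1$, using stationarity of $(U_t)$, the continuity $c((i+h)/n) = c(i/n) + o(1)$ uniformly in $i$, and the regular-variation limit
\[
\tfrac{n}{k}\Prob\bigl(Z_0 > \tfrac{n}{kcx},\,Z_h > \tfrac{n}{kcx'}\bigr)\to d_h(cx,cx') \;=\; c\cdot d_h(x,x'),
\]
(the last equality being the $\alpha=1$ homogeneity of the exponent measure), a Riemann sum argument produces $C(s\wedge s')\, d_h(x,x')$ from the pairs $j=i+h$, and the symmetric pairs with $j=i-h$ contribute $C(s\wedge s')\, d_h(x',x)$. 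Summability of the series $\sum_h d_h(x,x')$ follows from \ref{cond:expnum}, which gives $\sum_{h=1}^{r-1}(r-h)\Prob(A_0\cap A_h) = O(rk/n)$ and hence, after scaling, $\sum_{h=1}^{\infty} d_h(x,x') < \infty$; a cut-off at $h\le L_n\to\infty$ slowly justifies the interchange of sum and limit.

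Asymptotic tightness on $[0,1]\times[0,L]$ is established by moment-based chaining. Applying \ref{cond:hbound} to each decoupled big sub-block and summing over the $m$ blocks yields
\[
\Exp\bigl[(\tilde{\mathbb{S}}_n(s,y) - \tilde{\mathbb{S}}_n(s,x))^2\bigr] \le K\cdot h(|y-x|) \qquad (0\le x\le y\le L),
\]
together with an analogous Lipschitz-type bound $\le K\,|s-s'|$ for increments in the $s$-direction, obtained simply by counting the omitted indices. The $(2+\delta)$-moment bound for individual block contributions coming from \ref{cond:expnum} provides the extra integrability needed for chaining. Combined via Theorem~2.2.4 in \cite{VanWel96} (or the Ossiander-type bracketing bound used in \citealp{DreRoo10}), these moment bounds deliver asymptotic equicontinuity with respect to a suitable semi-metric on $[0,1]\times[0,L]$. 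Together with the finite-dimensional convergence above, this completes the proof.

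The main obstacle I expect is the uniform-in-$h$ covariance analysis. The stationary argument furnishes a single regular-variation limit $\tfrac{n}{k}\Prob(Z_0>\tfrac{n}{k},Z_h>\tfrac{n}{k})\to d_h(1,1)$, whereas here the drifting thresholds $c(i/n)x$ force a uniform (in $i$) form of regular variation, and exploitation of the $\alpha=1$ homogeneity of the exponent measure is what extracts the ``$c$-factor'' that assembles into $C(s\wedge s')$. In parallel, summability and interchangeability of the series in $h$ must be squeezed out of the moment bounds \ref{cond:expnum}–\ref{cond:hbound}, and the chaining balance in the $x$-direction is more delicate than in the stationary Lipschitz case because \ref{cond:hbound} only supplies the potentially concave modulus $h$ rather than a linear bound.
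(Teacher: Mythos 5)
Your proposal follows essentially the same route as the paper's proof: the same bias reduction via \ref{cond:sbias}, the same big-block/small-block decoupling under \ref{cond:mixing} with coupling to row-wise independent blocks, the Lindeberg CLT for the fidis with the Lindeberg condition from $r=o(\sqrt k)$, the covariance computed from the tail process with the $\alpha=1$ homogeneity extracting the factor $c(i/n)$ (the paper packages this as Lemma~\ref{lem:probapp} plus a core-length truncation at a fixed lag $K$ and the double-limit Lemma~\ref{lem:seq}, rather than your slowly growing cut-off), and tightness driven by \ref{cond:expnum}--\ref{cond:hbound}. The one soft spot is your primary tightness citation: plain $L^2$-chaining via Theorem~2.2.4 of \cite{VanWel96} with the modulus $h^{1/2}(|y-x|)$ gives a divergent entropy integral on a one- or two-dimensional index set, so the argument must go through your hedged alternative --- an Ossiander-type bracketing CLT for the decoupled block sums as in \cite{DreRoo10}, or the manageability-based Theorem~11.16 of \cite{Kos08} that the paper actually employs, where \ref{cond:hbound} and the H\"older continuity of $c$ enter only through the uniform convergence of the semimetrics $\rho_n$.
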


\begin{proposition}  \label{prop:step}
	Suppose that Conditions~\ref{cond:basic}--\ref{cond:mixing} and \ref{cond:2nd order} hold. Fix some constant $L\in\N$ and suppose that Conditions \ref{cond:expnum} and \ref{cond:hbound} hold for $L$. If $k$ satisfies $\sqrt{k}A(\tfrac{n}{Lk}) \to 0$ as $n \to \infty$, then
	\[ 
	\sup_{(s,x) \in [0,1] \times [0,L]} |\mathbb{F}_n(s,x) - \mathbb{S}_n(s,x)| = \op. 
	\]
	As a consequence,	
	$
	\mathbb{F}_n \wto \mathbb{S}$  in $(\ell^\infty([0,1] \times [0,L]), \| \cdot \|_\infty).
	$
\end{proposition}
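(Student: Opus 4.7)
The strategy is to reduce $\mathbb{F}_n$ to $\mathbb{S}_n$ by lifting the common deterministic threshold $V(n/(kx))$ to an observation-specific uniform threshold $\tilde u_i(x) := F_{n,i}(V(n/(kx)))$, so that $\{X_i^{(n)} > V(n/(kx))\} = \{U_i^{(n)} > \tilde u_i(x)\}$. Applying Condition~\ref{cond:2nd order} at $t = V(n/(kx))$, and using the eventual monotonicity of $A$ together with $x \in (0, L]$, yields the deterministic estimate
\[
\sup_{1 \le i \le n}\sup_{x \in (0,L]}\bigl|\tilde u_i(x) - u_i(x)\bigr| \;\le\; \tfrac{k}{n}\eps_n,\qquad u_i(x) := 1 - \tfrac{k}{n}c\bigl(\tfrac{i}{n}\bigr)x,
\]
with $\eps_n = O\bigl(A(n/(kL))\bigr)$ satisfying $\sqrt k\, \eps_n \to 0$ under the hypothesis $\sqrt k\,A(n/(kL)) \to 0$.

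Adding and subtracting the expected values $(1-\tilde u_i(x))$ and $(1-u_i(x))$ inside the indicator sums, I would decompose
\[
\mathbb{F}_n(s,x) - \mathbb{S}_n(s,x) \;=\; T_n(s,x) \;+\; B_n(s,x),
\]
where $T_n$ is the corresponding difference of mean-zero centred increments and the deterministic bias $B_n(s,x) = k^{-1/2}\sum_{i=1}^{\ip{ns}}(u_i(x) - \tilde u_i(x))$ satisfies $\|B_n\|_\infty \le \sqrt k\,\eps_n = o(1)$ by the previous step.

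The main obstacle is then to show $\|T_n\|_\infty = \op$. By monotonicity of indicators in the threshold,
\[
|T_n(s,x)| \;\le\; \frac{1}{\sqrt k}\sum_{i=1}^{\ip{ns}}\I\bigl(U_i^{(n)} \in J_{n,i}(x)\bigr) + \sqrt k\,\eps_n,
\]
where $J_{n,i}(x)$ is an interval of length at most $2(k/n)\eps_n$ centred at $u_i(x)$. Since each $U_i^{(n)}$ is uniform on $(0,1)$, the expected value of the random sum is at most $2\sqrt k\,\eps_n = o(1)$ uniformly in $(s,x)$. For the variance I would implement the big-block / small-block scheme permitted by Condition~\ref{cond:mixing}: within one block of length $r$, the pointwise bound $\Prob(U_i \in J_{n,i}(x),\, U_j \in J_{n,j}(x)) \le \sqrt{\Prob(U_i \in J_{n,i})\,\Prob(U_j \in J_{n,j})} \le 2k\eps_n/n$ gives a second-moment estimate of order $r^2 k\eps_n/n$; summing over the $n/r$ blocks and using $\beta$-mixing to decorrelate different blocks then yields a pointwise variance of order $r\eps_n = (r/\sqrt k)(\sqrt k\eps_n) = o(1)$, since $r = o(\sqrt k)$. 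Uniformity in $(s,x) \in [0,1]\times[0,L]$ is obtained by a standard chaining argument on a polynomial grid, exploiting monotonicity of the partial sums in $s$ and of $\I(U_i^{(n)} > u_i(x))$ in $x$, so that only the two nearest grid points matter.

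Combining the estimates on $B_n$ and $T_n$ gives $\|\mathbb{F}_n - \mathbb{S}_n\|_\infty = \op$, and the asserted weak convergence $\mathbb{F}_n \wto \mathbb{S}$ follows from Proposition~\ref{prop:sstep} and Slutsky's lemma in $\ell^\infty$.
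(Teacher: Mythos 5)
Your reduction via Condition \ref{cond:2nd order} is exactly the paper's first step: both arguments replace the common threshold $V(n/(kx))$ by the observation-specific uniform threshold and use the second-order bound to sandwich the indicator events between $\{U_i^{(n)} > 1 - c(\tfrac{i}{n})(1\mp\delta_n)\tfrac{kx}{n}\}$, producing a deterministic bias of size $\sqrt{k}\delta_n = o(1)$ plus an oscillation of $\mathbb{S}_n$ over an $x$-window of width $O(\delta_n)$. Up to and including your bound $|T_n(s,x)| \le k^{-1/2}\sum_i \I(U_i^{(n)}\in J_{n,i}(x)) + \sqrt{k}\eps_n$, the proposal is sound and equivalent to the paper's inequality $\sup_{s,x}|\mathbb{F}_n-\mathbb{S}_n| \le 2w_{\delta_n}(\mathbb{S}_n) + 2\sqrt{k}\delta_n$.

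The gap is in how you make the oscillation term uniform in $(s,x)$. Your plan is a single-scale discretization: a grid of mesh $\eta_n$ in $x$, monotonicity to control the between-grid oscillation, and a pointwise Chebyshev bound of order $r\eps_n$ at each grid point. This does not close. To make the between-grid oscillation $o(1)$ in expectation you need $\eta_n = o(k^{-1/2})$, hence $N \gg \sqrt{k}$ grid points; the union bound then costs $N\cdot O(r\eps_n) \gtrsim r\,\sqrt{k}\eps_n$, and since the hypotheses only give $\sqrt{k}\eps_n \to 0$ and $r\to\infty$ with no rate relation between them, this product need not vanish. A genuine multi-scale chaining argument would require exponential or higher-moment maximal inequalities for the block sums, i.e., precisely the manageability/equicontinuity machinery (Kosorok's Theorems 7.19 and 11.16, fed by Conditions \ref{cond:expnum}--\ref{cond:hbound} and the semimetric bound $\rho((s,y),(s,z))\lesssim h_0^{1/2}(|y-z|)$) that the paper establishes in the proof of Proposition \ref{prop:sstep} and then simply invokes here to get $w_{\delta_n}(\mathbb{S}_n)=\op$. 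So the missing ingredient is the asymptotic $\rho$-equicontinuity of $\mathbb{S}_n$; once you cite that (it is a by-product of the tightness proof for Proposition \ref{prop:sstep} rather than something to re-derive), your argument is complete. As a minor point, your within-block Cauchy--Schwarz second-moment bound $O(r^2k\eps_n/n)$ can be sharpened to $O(r\tfrac{k}{n}h(2\eps_n))$ using Condition \ref{cond:hbound}, which is what the equicontinuity proof actually uses.
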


\section{Finite-Sample Results}
\label{sec:sim}

A simulation study is carried out to analyze the finite-sample performance of the introduced methods. Results are presented for scaled versions of two common time series models. 
Define the following functions, later resulting in different scedasis functions.
\begin{align*}
 \textrm{(i)} & \ c_{1,\beta}(s) = \beta + 2(1-\beta)s,  \\
 \textrm{(ii)} & \ c_{2,\beta}(s) = (\beta + 4(1-\beta)s) \I(s \in [0,0.5]) + (4 -3\beta -4(1-\beta)s) \I(s \in (0.5,1]).
\end{align*}
Note that $c_{1,\beta}$ is a straight line connecting the points $(0,\beta)$ and $(1,2-\beta)$, while $c_{2,\beta}$ is a polygonal chain with vertices $(0,\beta)$  $(1/2, 2-\beta)$ and $(1, \beta)$.

We consider the following  scale models.
\begin{compactenum}
 \item[$\bullet$] The ARMAX-model:
 Let $(W_t)_t$ be an ARMAX-process as specified in \eqref{def_ARMAX}. We consider $\lambda \in\{ 0,0.25 \}$; note that $\lambda =0$ corresponds to the i.i.d.\ case. Denote the c.d.f.\ of $W_t$ by $F$, which is the c.d.f.\ of the standard Fr\'echet-distribution. For $j\in\{1, 2\}$ and $i\in\{1, \dots, n\}$, let 
 \[
 X_i^{(n)} = c_{j,\beta}(\tfrac{i}{n}) W_i.
 \]
 By Example \ref{ex:model}, the scedasis function $c$ is equal to $c_{j,\beta}$.
 Further, for $j\in\{1, 2\}$, consider 
 \begin{equation*} 
  X_i^{(n)} = \tilde c_{j,\beta}(\tfrac{i}{n},W_i)W_i :=  \big\{ \I(W_i < p) + c_{j,\beta}(\tfrac{i}{n}) \ \I(W_i \geq p) \big\} W_i, 
 \end{equation*}
 where $p$ is the $80\%$-quantile of $F$. In this model, the scale transformation introduced by $c_{j,\beta}$ only effects the observations exceeding the large threshold $p$. One can easily see that the scedasis function $c$ is equal to $c_{j,\beta}$.
 \item[$\bullet$] The ARCH-model: 
 Let $(W_t)_t$ be an ARCH-process, i.e.,
 \[ W_t = (2 \times 10^{-5} + \lambda W_{t-1}^2)^{1/2} V_t, \quad t \in \Z,\] where $\lambda \in (0,1)$ and $(V_t)_{t \in \Z}$ is an i.i.d. sequence of $\mathcal{N}(0,1)$-distributed random variables. We consider $\lambda=0.7$. By Theorem 1.1 in \cite{DehResRooVri89} the c.d.f.\ $F$ of $W_t$ satisfies $1-F(x) \sim dx^{-\kappa'}$ as $x \to \infty$ for some constant $d>0$, with $\kappa'$ (approximately)  given by $\kappa' = \kappa'(\lambda) = 1.586$; see Table 3.2 in that reference.  For $j\in\{1, 2\}$ and $i\in\{1, \dots, n\}$, let 
 \[
 X_i^{(n)} = c_{j,\beta}(\tfrac{i}{n})^{1/\kappa'} \ W_i. 
 \]
 The scedasis function $c$ is equal to $c_{j,\beta}$.
 Further, similar as for the ARMAX-model, consider 
 \begin{equation*} 
  X_i^{(n)} = \tilde c_{j,\beta}(\tfrac{i}{n},W_i)W_i := \big\{ \I(W_i < p) + c_{j,\beta}(\tfrac{i}{n})^{1/\kappa'} \ \I(W_i \geq p) \big\} W_i 
 \end{equation*}
 for $j\in\{1,2\}$,
 where $p$ is the $80\%$-quantile of $F$. A straightforward calculation shows that the scedasis function $c$ is equal to $c_{j,\beta}$ as well. 
\end{compactenum}
Note that the ARMAX model with $\lambda =0$ corresponds to the case that the observations are independent. We call this case simply the independent model.

In the subsequent simulation study, the parameter $\beta$ of the scedasis functions, is set to $\beta=1,0.75,0.5,0.25$. In each case, the sample size is fixed to $n=2000$ and the performance of the statistical methods is assessed based on $N=1000$ simulation runs each if not mentioned otherwise.

\subsection{Estimation of the scedasis function}
We start by briefly considering the behavior of the kernel estimator for the scedasis function. For the sake of brevity, we restrict the presentation to the ARCH-model with scedasis function $c_{2,\beta}$ with $\beta=0.5$; the behavior within the other models was found to be very similar. In Figure \ref{Fig:plot_c2}, we depict the estimator $\tilde c_n$ for four exemplary time series, where we use the biweight kernel $K$ 
\begin{align} \label{eq:biw}
K(x)=\frac{15}{16}(1-x^2)^2, \quad x \in [-1,1],
\end{align}
$k=400$ and consider bandwidths $h \in \{0.03, 0.11, 0.19, 0.27\}$. We observe typical over-fitting (under-smoothing) for small values of $h$ and under-fitting (over-smoothing) for large values of $h$. Note in particular that the estimator no longer captures the peak of $c_{2,\beta}(s)$ at $s=0.5$ for $h=0.27$. Visual inspection suggests that reasonably good choices for the bandwidth lie in the interval $[0.1,0.2]$; an observation that was confirmed in simulations regarding the other models described in the previous section.

\begin{figure} [t!]
	\begin{center}
		\includegraphics[width=\textwidth]{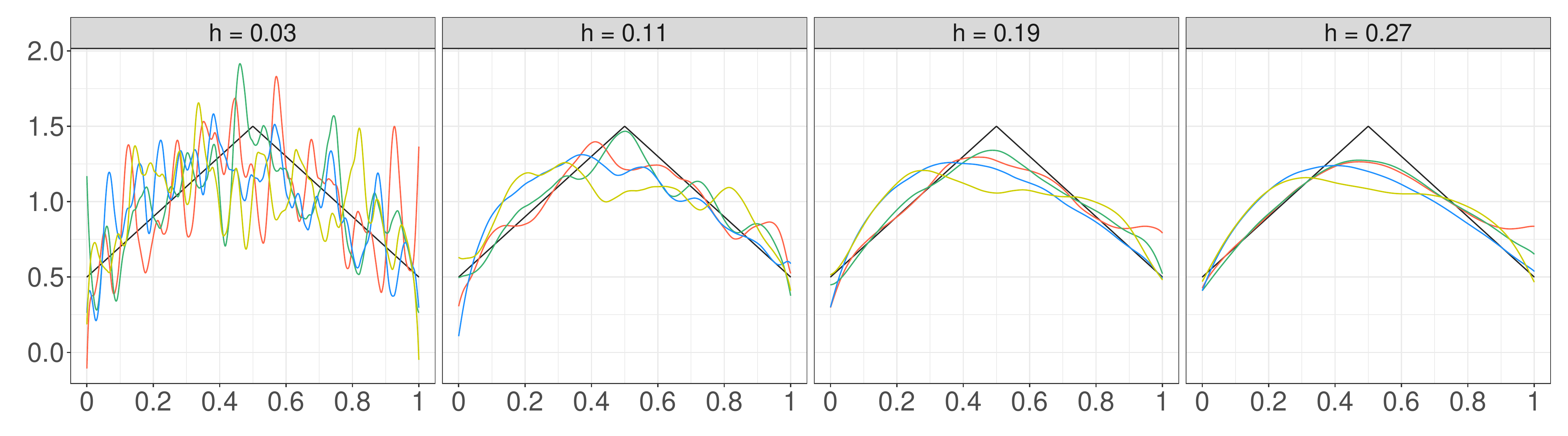} \vspace{-.5cm}
		\caption{The scedasis function $c_{2,\beta}$ with $\beta=0.5$ (black line) and the estimator $\tilde c_n$ evaluated at four exemplary time series generated from the ARCH-model.} 
		\vspace{-.3cm} 
		\label{Fig:plot_c2}
	\end{center}
\end{figure}

\subsection{Testing for heteroscedastic extremes}

We next study the performance of the introduced test procedures. Recall that both the tests based on the multiplier block bootstrap and the ones relying on the method of self-normalization depend on a multiplier sequence $(\xi_i)_i$, for which we choose an i.i.d.\ Rademacher sequence. The following results are based on $B=300$ bootstrap replicates. We consider block sizes $q \in\{ 4,8 \}$ and number of exceedances $k\in\{100,200\}$, which corresponds to $5\%$ or $10\%$ of the total observations, respectively. The test level is set to $\alpha=0.05$.

\begin{table}[t!]
\centering
{ \scriptsize
\begin{tabular}{ll|rrrr|rrrr|rr}
  \hline
 &&  \multicolumn{4}{c|}{Bootstrap with $(k,r)=$ }& \multicolumn{4}{c|}{Self-Normalization with $(k,r)=$}& \multicolumn{2}{c}{EdHZ  with $k=$} \\
   Model & $\beta$ & $(100,4)$ & $(100,8)$ & $(200,4)$ & $(200,8)$ & $(100,4)$ & $(100,8)$ & $(200,4)$ & $(200,8)$ & 100 & 200 \\ 
 \hline
  \addlinespace[.1cm]
  \multicolumn{12}{l}{\hspace{.3cm}\textbf{Panel (A): Scale model $c_{1,\beta}$}} \\ \hline
 Indep. & 1.0 & 3.9 & 2.2 & 1.8 & 0.6 & 4.2 & 2.3 & 2.3 & 1.5 & 4.7 & 4.1 \\ 
   & 0.75 & 23.3 & 17.9 & 34.6 & 22.5 & 16.5 & 12.7 & 21.8 & 15.6 & 26.9 & 46.2 \\ 
    & 0.5 & 78.2 & 71.5 & 95.8 & 91.2 & 51.2 & 44.0 & 70.0 & 60.7 & 81.9 & 98.2 \\ 
    & 0.25 & 98.9 & 98.4 & 100.0 & 99.9 & 84.5 & 76.5 & 94.7 & 89.4 & 99.0 & 100.0 \\ 
   ARMAX & 1.0 & 6.6 & 4.5 & 4.4 & 2.7 & 5.5 & 4.0 & 3.4 & 3.1 & 14.8 & 12.1 \\ 
    & 0.75 & 21.4 & 17.3 & 30.7 & 20.8 & 14.6 & 12.2 & 18.7 & 14.3 & 35.4 & 50.7 \\ 
   & 0.5 & 65.6 & 59.7 & 88.3 & 79.2 & 42.5 & 36.7 & 59.3 & 48.3 & 77.4 & 96.0 \\ 
  & 0.25 & 94.7 & 91.8 & 99.6 & 99.4 & 71.5 & 66.5 & 90.6 & 82.4 & 97.6 & 100.0 \\ 
   ARCH & 1.0 & 7.9 & 5.0 & 4.4 & 1.7 & 5.7 & 4.5 & 3.6 & 3.0 & 16.1 & 11.1 \\ 
    & 0.75 & 48.0 & 38.4 & 51.0 & 37.1 & 30.7 & 24.4 & 28.8 & 25.3 & 61.6 & 66.2 \\ 
    & 0.5 & 94.7 & 92.0 & 98.5 & 96.5 & 73.6 & 67.6 & 81.2 & 69.9 & 98.0 & 99.6 \\ 
    & 0.25 & 99.9 & 99.8 & 100.0 & 100.0 & 94.3 & 91.2 & 96.8 & 93.2 & 100.0 & 100.0 \\ 
 \hline
  \addlinespace[.1cm]
  \multicolumn{12}{l}{\hspace{.3cm}\textbf{Panel (B): Scale model $c_{2,\beta}$}} \\ \hline
  Indep. & 1.0 & 3.9 & 2.2 & 1.8 & 0.6 & 4.2 & 2.3 & 2.3 & 1.5 & 4.7 & 4.1 \\ 
    & 0.75 & 7.3 & 4.3 & 5.6 & 2.2 & 6.7 & 4.6 & 5.6 & 3.2 & 5.8 & 7.4 \\ 
   & 0.5 & 29.4 & 19.6 & 52.0 & 32.3 & 17.0 & 11.2 & 25.6 & 17.3 & 20.1 & 55.3 \\ 
   & 0.25 & 78.6 & 68.6 & 98.2 & 92.6 & 42.1 & 33.6 & 57.9 & 48.4 & 68.0 & 98.8 \\ 
   ARMAX & 1.0 & 6.6 & 4.5 & 4.4 & 2.7 & 5.5 & 4.0 & 3.4 & 3.1 & 14.8 & 12.1 \\ 
    & 0.75 & 9.6 & 6.8 & 9.4 & 5.7 & 7.7 & 6.2 & 7.4 & 5.2 & 17.6 & 21.9 \\ 
    & 0.5 & 27.5 & 18.3 & 41.2 & 26.4 & 16.3 & 12.0 & 20.2 & 16.2 & 36.9 & 64.8 \\ 
    & 0.25 & 62.8 & 53.7 & 90.6 & 79.6 & 32.9 & 26.3 & 50.7 & 40.5 & 74.3 & 97.0 \\ 
   ARCH & 1.0 & 7.9 & 5.0 & 4.4 & 1.7 & 5.7 & 4.5 & 3.6 & 3.0 & 16.1 & 11.1 \\ 
    & 0.75 & 19.8 & 13.0 & 14.2 & 6.4 & 13.7 & 8.1 & 10.7 & 5.6 & 27.6 & 26.6 \\ 
    & 0.5 & 66.5 & 53.5 & 73.7 & 52.7 & 35.1 & 25.6 & 37.9 & 24.7 & 73.1 & 86.6 \\ 
  & 0.25 & 96.2 & 92.7 & 99.5 & 97.5 & 65.1 & 56.4 & 69.3 & 60.0 & 98.5 & 99.9 \\ 
    \hline
  \addlinespace[.1cm]
  \multicolumn{12}{l}{\hspace{.3cm}\textbf{Panel (C): Scale model $\tilde c_{1,\beta}$}} \\ \hline
   Indep. & 1.0 & 3.9 & 2.2 & 1.8 & 0.6 & 4.2 & 2.3 & 2.3 & 1.5 & 4.7 & 4.1 \\ 
    & 0.75 & 24.3 & 18.4 & 34.8 & 22.8 & 15.8 & 12.5 & 24.2 & 14.4 & 26.9 & 46.2 \\ 
    & 0.5 & 78.2 & 71.0 & 96.0 & 91.4 & 49.2 & 46.2 & 71.0 & 58.4 & 81.9 & 98.2 \\ 
   & 0.25 & 99.3 & 98.6 & 100.0 & 100.0 & 80.1 & 75.9 & 94.0 & 89.1 & 99.0 & 100.0 \\ 
   ARMAX & 1.0 & 6.6 & 4.5 & 4.4 & 2.7 & 5.5 & 4.0 & 3.4 & 3.1 & 14.8 & 12.1 \\ 
     & 0.75 & 21.9 & 17.7 & 30.3 & 20.6 & 16.0 & 12.7 & 17.9 & 12.2 & 35.4 & 50.7 \\ 
     & 0.5 & 65.8 & 57.9 & 88.4 & 80.1 & 41.6 & 36.0 & 61.2 & 50.1 & 77.4 & 96.0 \\ 
     & 0.25 & 94.8 & 91.7 & 99.6 & 99.5 & 71.5 & 61.2 & 88.4 & 83.3 & 97.6 & 100.0 \\ 
    ARCH & 1.0 & 7.9 & 5.0 & 4.4 & 1.7 & 5.7 & 4.5 & 3.6 & 3.0 & 16.1 & 11.1 \\ 
     & 0.75 & 46.5 & 37.9 & 52.0 & 36.2 & 31.4 & 24.4 & 29.4 & 21.7 & 61.6 & 66.2 \\ 
     & 0.5 & 94.6 & 92.0 & 98.7 & 96.4 & 76.9 & 66.1 & 79.7 & 68.8 & 98.0 & 99.6 \\ 
     & 0.25 & 99.9 & 100.0 & 100.0 & 100.0 & 93.9 & 89.8 & 97.6 & 94.2 & 100.0 & 100.0 \\ 
    \hline
  \addlinespace[.1cm]
  \multicolumn{12}{l}{\hspace{.3cm}\textbf{Panel (D): Scale model $\tilde c_{2,\beta}$}} \\ \hline
   Indep. & 1.0 & 3.9 & 2.2 & 1.8 & 0.6 & 4.2 & 2.3 & 2.3 & 1.5 & 4.7 & 4.1 \\ 
     & 0.75 & 6.9 & 4.0 & 6.4 & 2.0 & 5.8 & 5.2 & 5.9 & 3.3 & 5.8 & 7.4 \\ 
     & 0.5 & 28.8 & 20.1 & 51.6 & 33.0 & 17.6 & 10.6 & 24.9 & 17.9 & 20.1 & 55.3 \\ 
     & 0.25 & 78.4 & 69.0 & 98.5 & 92.2 & 39.3 & 32.5 & 58.6 & 46.2 & 68.0 & 98.8 \\ 
    ARMAX & 1.0 & 6.6 & 4.5 & 4.4 & 2.7 & 5.5 & 4.0 & 3.4 & 3.1 & 14.8 & 12.1 \\ 
     & 0.75 & 10.5 & 7.0 & 10.1 & 5.3 & 7.1 & 6.4 & 7.1 & 4.4 & 17.6 & 21.9 \\ 
     & 0.5 & 25.8 & 19.1 & 42.0 & 25.9 & 17.3 & 10.6 & 23.7 & 14.1 & 36.9 & 64.8 \\ 
     & 0.25 & 63.6 & 53.3 & 90.8 & 79.5 & 33.9 & 26.2 & 48.7 & 40.8 & 74.3 & 97.0 \\ 
    ARCH & 1.0 & 7.9 & 5.0 & 4.4 & 1.7 & 5.7 & 4.5 & 3.6 & 3.0 & 16.1 & 11.1 \\ 
     & 0.75 & 19.8 & 12.3 & 14.8 & 6.8 & 13.9 & 7.2 & 11.3 & 6.9 & 27.6 & 26.6 \\ 
     & 0.5 & 65.7 & 53.4 & 73.3 & 53.8 & 36.4 & 26.5 & 36.8 & 27.4 & 73.1 & 86.6 \\ 
     & 0.25 & 96.3 & 93.1 & 99.6 & 97.8 & 64.5 & 54.9 & 70.8 & 59.7 & 98.5 & 99.9 \\ 
   \hline 
\end{tabular}
}
\vspace{-.1cm}
\caption{Empirical rejection percentage of the test procedures.}
\label{Tab:Rej_perc}
\end{table}

Since the Cramér-von-Mises-type test statistics (i.e., $\varphi_{n,B,T}$ and $\varphi_{n,T}$) were found to be superior to the Kolmogorov-Smirnov-type test statistics (i.e., $\varphi_{n,B,S}$ and $\varphi_{n,S}$), we only present results for the former. Here, we refer to $\varphi_{n,B,T}$ simply as the bootstrap, and to $\varphi_{n,T}$ as the self-normalization. All rejection percentages are presented in Table \ref{Tab:Rej_perc}.

We start by discussing the behavior of the tests under $H_0:C(s)=s$ for all $s \in [0,1]$; note that $\beta =1$ represents being under $H_0$ for all data generating processes under consideration.  We also present results for the Cramér-von-Mises-type test from \cite{EinDehZho16}, which was designed for the case of independent data and is here denoted by EdHZ. One can see that our tests hold their level and, as expected, that the test from \cite{EinDehZho16} holds its level in the independent model, but fails to do so in the other dependent models. 

Next, we consider the performance under the alternatives. One can see that the power of the tests increases with decreasing $\beta$, which is to be expected since a decrease in $\beta$ results in a stronger deviation of $c_{j,\beta}$ from the null hypothesis that the scedasis function equals one. In general, the power of the bootstrap-test is uniformly higher than the power of the test based on self-normalization, but both exhibit high power for $\beta = 0.25$. Recall again that the self-normalization test only requires evaluation of $\mathbb C_{n,\xi}^{\scs (b)}$ for $b\in\{1,2\}$, while the expression must be evaluated a large number of times for the bootstrap test (we choose $B=300$).
With regard to the choice of $k$ and $r$ the highest power is usually attained for $k=200$ and $r=4$.

\subsection{Estimation of the extremal index} \label{Sim:EI}

We finally briefly evaluate the performance of the estimator for the extremal index. For comparison, we also introduce a second estimator for $\theta$ based on the method of moments, which may also be motivated by  Lemma \ref{Zni_exp}: under the notation of Section \ref{sec:EI}, consider the (unobservable) random variable
\[ 
T_{n2} = \frac{1}{k'} \sum_{j=1}^{k'} Z_{n,j} \ c \big(\tfrac{j}{k'}\big).
\]
Note that $
\Exp \big[ Z_{n,1+\lfloor \xi k' \rfloor} \big]  c \big(\tfrac{1+\lfloor \xi k' \rfloor}{k'}\big) \to \Exp[V_{\xi}]  c(\xi) = \frac{1}{\theta},
$ 
where $V_{\xi}\sim \mathrm{Exp}(\theta c(\xi))$, by continuity of $c$, Condition \ref{cond:uniform integr} and Lemma \ref{Zni_exp}.
Then, as in Section \ref{sec:EI}, it follows that
\[ 
\Exp[T_{n2}] = \frac{1}{k'} \sum_{j=1}^{k'} \Exp\Big[Z_{n,j} c \big(\tfrac{j}{k'}\big) \Big] 
\to 
\int_{0}^{1} \frac{1}{\theta} \ \mathrm{d}\xi = \frac{1}{\theta}. 
\]
Therefore, another sensible method of moments estimators for $\theta$ is given by 
\begin{align*}
\hat{\theta}_{n2} = \Big\{ \frac{1}{k'} \sum_{s=1}^{k'} \hat{Z}_{n,s} \ \hat{c}_n \Big(\frac{s}{k'}\Big) \Big\}^{-1}.
\end{align*}

\begin{figure} [t!]
	\begin{center}
		\includegraphics[width=\textwidth]{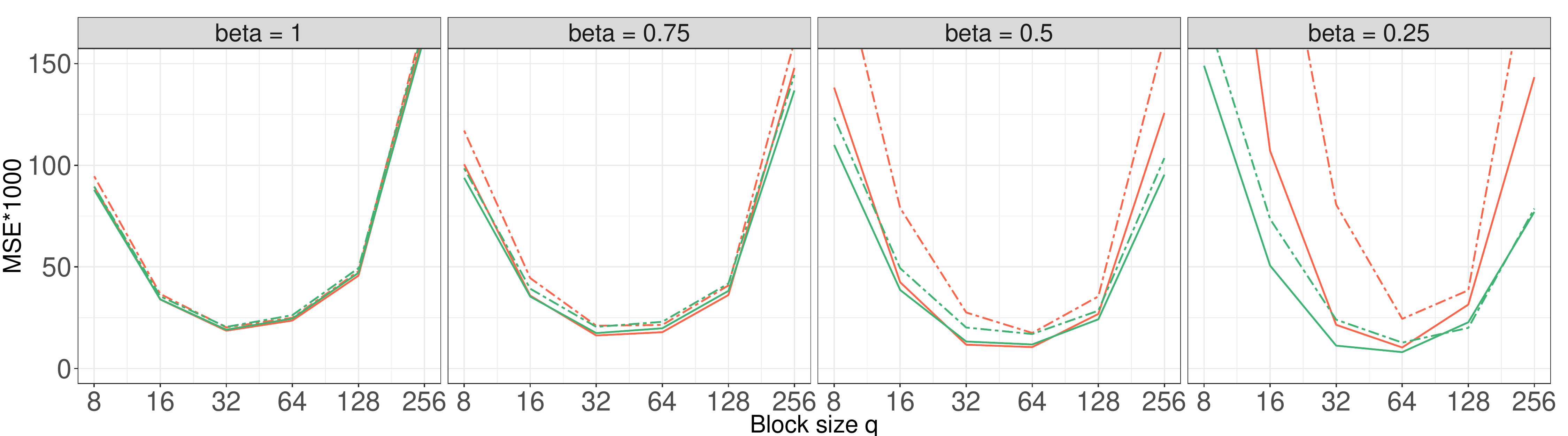} \vspace{-.5cm}
		\caption{Mean squared error, multiplied by $10^3$, for the estimation of $\theta$ in the ARCH-model with scedasis function $c_{2,\beta}$ for $\hat \theta_{n1}$ (orange lines) and $\hat \theta_{n2}$ (green lines) with $k=400$ (solid lines) and $k=300$ (dotted lines).} 
		\vspace{-.3cm} 
		\label{Fig:ei_c2_ARCH}
	\end{center}
\end{figure}

We only present results for the ARCH-model; the ARMAX- and independent model were found to yield very similar results.
Note that for $\lambda=0.7$ in the ARCH-model we have $\theta=0.721$, see Table 3.2 in \cite{DehResRooVri89}.

In what follows, the block size $\bs$ is chosen from the set $\{8,16,32,64,128,256\}$ (recall that $k'=\lfloor n/\bs \rfloor$) and the number of exceedances $k \in \{300,400\}$ are considered. (Here, slightly larger values of $k$ turned out to work better than in the context of testing for heteroscedastic extremes above.) Regarding the kernel density estimator, we set $\kappa=0.1$, set the bandwidth to $h =0.2$ and use the biweight kernel from \eqref{eq:biw}.

In Figure \ref{Fig:ei_c2_ARCH}, the mean squared error (MSE) of $\hat \theta_{n1}$ and $\hat \theta_{n2}$ is plotted as a function of the block size $q$, where the true scedasis function is given by $c_{2,\beta}$ for different values of $\beta$.  
One can see that the MSE-curves are mostly U-shaped, and that a minimum value is reached at an intermediate blocksize of $\bs \in \{32,64\}$. Further, in most scenarios the alternative estimator $\hat \theta_{n2}$ outperforms the estimator $\hat \theta_{n1}$, and the larger number of exceedances $k=400$ seems to work better than $k=300$ in terms of minimal MSE-values. The same observations were found for the other scedasis functions $c_{1,\beta}$, $\tilde c_{1,\beta}$ and $\tilde c_{2,\beta}$.


\section{Proofs} \label{sec:proofs}

For space considerations, we only present the proofs for the theoretical results from Section~\ref{sec:sstep}, which are central to all other proofs. The remaining proofs for Sections~\ref{sec:scedasis}-\ref{sec:EI} are collected in a supplementary material.



\begin{proof}[Proof of Proposition \ref{prop:sstep}]
Recall that $c_\infty(L)=1+ L\|c\|_\infty$. For $i\in\{1,\dots, n\}$ and $n\in\N$, define
\begin{align} \label{eq:xnprime}
X'_{n,i} =  \bigg( \frac{U_i^{(n)} - (1- \tfrac{k}{n}c_\infty(L)) }{\tfrac{k}{n}}\bigg)_+ = \max  \bigg( \frac{U_i^{(n)} - (1- \tfrac{k}{n}c_\infty(L)) }{\tfrac{k}{n}}, 0\bigg)
\end{align}
and let $v_n = \Pr(X'_{n,i} \ne 0) = \tfrac{k}{n} c_\infty(L)$. We may then write
\begin{multline*}
\mathbb{S}_n(s,x) 
=
\frac{1}{\sqrt{k}} \sum_{i=1}^{\ip{sn}}  \Big\{ \bm 1( X'_{n,i} > c_\infty(L)- c(\tfrac{i}{n}) x) - \Pr(X'_{n,i} > c_\infty(L)- c(\tfrac{i}{n}) x) \Big\} \\
+ {\sqrt k} \bigg\{ \frac{1}{n} \sum_{i=1}^{\ip{ns}} c(\tfrac{i}n)  - C(s)\bigg\} x \equiv \mathbb{S}_{n,1}(s,x)  + \mathbb{S}_{n,2}(s,x) .
\end{multline*}
As a consequence of \ref{cond:sbias}, the term $\Sb_{n,2}$ converges to zero, uniformly in $s$ and $x$, and we are left with investigating $\Sb_{n,1}$. We are going to identify that process with an empirical cluster process, see \cite{DreRoo10}. In the following we set $L=1$; the proof for arbitrary $L \in \N$ follows analogously. We also write $c_\infty=c_\infty(1)$.

Recall that $1< r < n$ denotes an integer sequence converging to infinity such that $r=o(n)$ as $n\to\infty$. Let $Y_{n,j}$ denote the $j$th block of consecutive values of $X'_{n,1}, \dots, X'_{n,n}$, i.e.,
\[
Y_{n,j} = (X'_{n,i})_{i \in I_j}, \quad I_j =\{ (j-1)r+1 , \dots,  jr\}, \quad j=1, \dots, m = \ip{n/r}.
\] 
We may then write
\begin{align*}
\Sb_{n,1}(s,x) 
&= 
c_\infty^{1/2} \bigg\{  \frac{1}{\sqrt{nv_n}} \sum_{i=1}^{rm}  \bm 1\big\{ X'_{n,i} > c_\infty- c(\tfrac{i}{n}) x, \tfrac{i}{n} \le s\big\} \\
&\hspace{5cm} - \Exp \bm 1\big\{X'_{n,i} > c_\infty- c(\tfrac{i}{n}) x, \tfrac{i}{n} \le s\big\} \bigg\}  + o_{\Prob}(1) \\
&=  
c_\infty^{1/2} \bigg\{ \frac{1}{\sqrt{nv_n}} \sum_{j=1}^{m}  \big\{\tilde f_{j,n,s,x}(Y_{n,j}) -  \Exp[\tilde f_{j,n,s,x}(Y_{n,j})] \big\} \bigg\} + o_\Prob(1) 
\end{align*}
where the $o_\Prob(1)$ is due to the fact that $mr\ne n$ in general, and where $\tilde f_{j,n,s,x}$ denotes the cluster functional (see \citealp{DreRoo10}, Definition 2.1)
\[
\tilde f_{j,n,s,x}(y_1, \dots, y_\ell) = \sum_{i=1}^\ell \bm 1(y_i > c_\infty - c (\tfrac{(j-1)r + i}{n}) x, \tfrac{(j-1)r + i}{n} \le s), \qquad \ell \in \N.
\]
Hence, we need to show functional weak convergence of $\{\tilde \Zb_n(s,x)  \}_{(s,x)}$, where
\begin{align}
\tilde \Zb_n(s,x) = \frac{1}{\sqrt{nv_n}} \sum_{j=1}^{m}  \big\{ \tilde f_{j,n,s,x}(Y_{n,j}) -  \Exp[ \tilde f_{j,n,s,x}((Y_{n,j})] \big\}. \label{Zn_tilde_def}
\end{align}
Unfortunately, results from  \cite{DreRoo10} are not directly applicable, as functions $f$ depending on $n$ (and, even more complicated, on $j$) are not allowed in their theory. Before proceeding, note that we may slightly redefine $\tilde f_{j,n,s,x}$. Indeed, let $\Zb_n$ be defined analogously to $\tilde \Zb_n$, but in terms of
\begin{align}
f_{j,n,s,x}(y_1, \dots, y_\ell) = \bm1(j \le \ip{sm}) g_{j,n,x}(y_1, \dots, y_\ell) \label{f_jnsx}
\end{align}
where
\[ 
g_{j,n,x}(y_1, \dots, y_\ell) =
\sum_{i=1}^\ell \bm 1(y_i > c_\infty - c (\tfrac{(j-1)r + i}{n}) x), \qquad \ell \in \N.
\]
Now, for all $s,x\in[0,1]$,
\begin{align}
|\Zb_n(s,x) - \tilde \Zb_n(s,x) | 
&\le
2 \frac{1}{\sqrt{nv_n}} \sum_{j=1}^{m} \sum_{i=1}^{r} | \bm1(\tfrac{j}{m} \le s) - \bm1(\tfrac{(j-1)r + i}{n} \le s) | \nonumber \\
& \le 
2 \frac{r}{\sqrt{nv_n}} \sum_{j=1}^{m} \bm1(\tfrac{j-1}{m} < s \le \tfrac{j+1}{m}) 
\le 
4 \frac{r}{\sqrt{nv_n}}. \label{Zn_tilde}
\end{align}
Recalling $v_n=\tfrac{k}{n} c_\infty$, we have $r= o(\sqrt{nv_n})$ by \ref{cond:mixing}. As a consequence, we have shown that
\begin{align}
  \mathbb S_n = c_\infty^{1/2} \Zb_n + \op \quad \textrm{in} \quad \ell^\infty([0,1] \times [0,L]), \label{Sn_Zn}
\end{align}
such that it is sufficient to show that the process~$\Zb_n$ converges to $c_\infty^{\scs -1/2} \Sb$.

Consider weak convergence of the fidis of $\Zb_n$ first, and for that purpose let us first assume that the blocks $Y_{n,1}, \dots, Y_{n,m}$ are independent. The general case will be reduced to the independent case by the Bernstein blocking technique below. 
Under the assumption of independent blocks, we may apply the Cram\'er-Wold device and the classical Lindeberg~CLT (\citealp{Bil95}, Theorem~27.2). We need to show that
\begin{align} \label{eq:cov1}
\lim_{n \to \infty} \mathfrak c_n((s,x), (s',x')),
=
c_\infty^{-1} \mathfrak c((s,x), (s',x')),
\end{align}
where $\mathfrak c$ is defined in \eqref{eq:cov2} and where
\begin{align}
 \mathfrak c_n((s,x), (s',x')) =   \frac{1}{nv_n} \sum_{j=1}^{m} \Cov \big( f_{j,n,s,x}(Y_{n,j}), f_{j,n,s',x'}(Y_{n,j})\big), \label{cn_cov}
\end{align}
and that the Lindeberg condition is satisfied, that is, 
for any $(s,x) \in[0,1]^2$ and any $\eps>0$,
\begin{multline*} 
\lim_{n\to\infty} \frac{1}{nv_n} \sum_{j=1}^{m} \Exp\Big[ \{ f_{j,n,s,x}(Y_{n,j})- \Exp f_{j,n,s,x}(Y_{n,j}) \}^2 \\
\bm 1( |f_{j,n,s,x}(Y_{n,j})- \Exp f_{j,n,s,x}(Y_{n,j})| > \eps \sqrt{nv_n}) \Big] = 0.
\end{multline*}

Observing that $|f_{j,n,s,x}| \le r$, the Lindeberg condition is actually  a simple consequence of the assumption $r =o(\sqrt{nv_n})$ in \ref{cond:mixing}, see also Corollary 3.6 in \cite{DreRoo10} for a similar argumentation. 

It remains to prove  \eqref{eq:cov1}, and for that purpose, we follow arguments from the proof of Remark 3.7 and Corollary 4.2 in \cite{DreRoo10}. First of all, since $| \Exp[f_{j,n,s,x}(Y_{n,j})] | \le r \Prob(X_{n,1} \ne 0) = rv_n$ for all $j=1, \dots, m$ and $s,x\in[0,1]$, we have that 
\begin{align} \label{eq:cnk}
 \mathfrak c_n((s,x), (s',x')) 
&=  \nonumber 
 \frac{1}{nv_n} \sum_{j=1}^{m} \Exp[ f_{j,n,s,x}(Y_{n,j}) f_{j,n,s',x'}(Y_{n,j})\big] + O(rv_n) \\
&=
\frac{r}{n} \sum_{j=1}^{m}  \bm 1(j \le \ip{(s \wedge s')m}) A_n(j) + O(rv_n)
\end{align}
where 
\[
A_n(j)
=
 \frac{1}{rv_n} \Exp\big[ g_{j,n,x}(Y_{n,j})g_{j,n,x'}(Y_{n,j})\big]
\]
and where the remainder is $o(1)$ by \ref{cond:mixing}.

Let us next calculate $A_{n}(j)$. For that purpose, recall the notion of the length of the core of a cluster $y$, denoted by $L(y)$, see Definition 2.1 in \cite{DreRoo10}. Let $K>0$ be a constant and decompose
\begin{align*}
A_n(j) 
& = \frac{1}{rv_n} \Exp\big[ g_{j,n,x}(Y_{n,j})g_{j,n,x'}(Y_{n,j}) \bm 1(L(Y_{n,j}) \le K) \big]  \\
& \hspace{2cm} +  \frac{1}{rv_n} \Exp\big[ g_{j,n,x}(Y_{n,j})g_{j,n,x'}(Y_{n,j}) \bm 1(L(Y_{n,j}) > K) \big]  \\
&= S_{n,K}(j) + R_{n,K}(j).
\end{align*}
By stationarity, we have
\begin{align*} 
R_{n,K}(j) 
&\le  \nonumber
\frac1{rv_n} \sum_{i,i'=1}^{r} \Prob \Big( X'_{n,i} > 0, X'_{n,i'} >0, L(Y_{n,1}) >K \Big)  \\
&\le \nonumber
\frac1{rv_n} \Exp\Big[ \Big(\sum_{i =1}^{r} \bm 1(X'_{n,i} > 0) \Big)^2 \bm 1(L(Y_{n,1}) >K) \Big]  \\
&\le  
\Big\{ \frac1{rv_n} \Exp\Big[ \Big(\sum_{i =1}^{r} \bm 1(X'_{n,i} > 0) \Big)^{2+\delta} \Big] \Big\}^{2/(2+\delta)} \Big\{ \frac1{rv_n} \Prob(L(Y_{n,1}) >K)  \Big\}^{\delta/(2+\delta)}.
\end{align*}
Thus, as a consequence of \ref{cond:expnum} and Lemma~5.2(vii) in \cite{DreRoo10}, which is applicable by \ref{cond:mixing},   we obtain that
\begin{align} \label{eq:rnk}
\lim_{K\to\infty} \limsup_{n\to\infty} \sup \{ R_{n,K}(j) : j=1, \dots, m\} = 0.
\end{align}
Further, for any $j\in\{1, \dots, m\}$,
\begin{align*}
S_{n,K}(j) = &\, 
\frac1{rv_n} \sum_{i,i'\in J_{n,j}} \Prob \Big( X'_{n,i} > c_\infty - c(\tfrac{i}n) x, X'_{n,i'} > c_\infty - c(\tfrac{i'}n) x', L(Y_{n,j}) \le K \Big)   \\
\nonumber = &\, 
\frac1{rv_n} \sum_{i,i'\in J_{n,j}, \atop  |i-i'|\le K} \Prob \Big( X'_{n,i} > c_\infty - c(\tfrac{i}n) x, X'_{n,i'} > c_\infty - c(\tfrac{i'}n) x', L(Y_{n,j}) \le K \Big)   \\
=&\, 
S_{n,K}'(j) + R_{n,K}'(j),
\end{align*}
where
\begin{align*}
S_{n,K}'(j) 
&= 
\frac1{rv_n} \sum_{i,i'\in J_{n,j},  \atop |i-i'|\le K} \Prob \Big( X'_{n,i} > c_\infty - c(\tfrac{i}n) x, X'_{n,i'} > c_\infty - c(\tfrac{i'}n) x' \Big),   \\
R_{n,K}'(j) &=
\frac1{rv_n} \sum_{i,i'\in J_{n,j}, \atop |i-i'|\le K} \Prob \Big( X'_{n,i} > c_\infty - c(\tfrac{i}n) x, X'_{n,i'} > c_\infty - c(\tfrac{i'}n) x', L(Y_{n,j}) >K \Big)  .
\end{align*}
By similar calculations as in \eqref{eq:rnk}, we have that
\begin{align} \label{eq:rnk2}
\lim_{K\to\infty} \limsup_{n\to\infty} \sup\{ R_{n,K}'(j) : j=1, \dots, m\} = 0.
\end{align}
Further, by Lemma~\ref{lem:probapp} and uniform continuity of~$c$, 
\begin{align*}
S_{n,K}'(j) 
&= 
\frac{1}{c_\infty r}  \sum_{i\in J_{n,j}}    c(\tfrac{i}n) \Big\{ d_0(x,x')  +  \sum_{h=1}^{K \wedge (r-i)} \{ d_h(x,x') + d_h(x',x) \} \Big\}  + o(1) \\
&= 
\frac{c(\tfrac{j-1}{m})}{c_\infty}  D_K(x,x')  + o(1) ,
\end{align*}
where the $o(1)$ is uniform in $x,x'\in[0,1]$ and $j=1, \dots, m$ and where
\[
D_K(x,x') =  d_0(x,x')  +  \sum_{h=1}^{K } \{ d_h(x,x') + d_h(x',x) \}.
\]
Assembling terms, we have 
\[
A_n(j) = \frac{c(\tfrac{j-1}{m})}{c_\infty}  D_K(x,x') + R_{n,K}(j) + R_{n,K}'(j) + o(1)
\]
where the $o(1)$ is uniform in $j=1, \dots, m$ and $x,x'\in[0,1]$.

As a consequence of the latter display and \eqref{eq:cnk}, we obtain that
\begin{align*}
\mathfrak c_n((s,x), (s',x')) 
&= 
\mathfrak c_{n,K}((s,x), (s',x'))  + \mathfrak r_{n,K}((s,x), (s',x'))  +o(1),
\end{align*}
where 
\begin{align*}
\mathfrak c_{n,K}((s,x), (s',x')) 
&=
c_\infty^{-1} \frac{r}{n} \sum_{j=1}^{\ip{(s \wedge s')m}}  c(\tfrac{j-1}{m}) D_K(x',x)  \\
\mathfrak r_{n,K}((s,x), (s',x')) 
&=
c_\infty^{-1} \frac{r}{n} \sum_{j=1}^{\ip{(s \wedge s')m}}   \{ R_{n,K}(j) + R_{n,K}'(j) \}
\end{align*}
By \eqref{eq:rnk} and \eqref{eq:rnk2}, we have
\[
\lim_{K\to\infty} \limsup_{n\to\infty} \mathfrak r_{n,K}((s,x), (s',x'))  = 0.
\]
Further,
\[
\lim_{n\to\infty} \mathfrak c_{n,K}((s,x), (s',x'))  = \frac{C(s\wedge s')}{c_\infty} D_K(x,x').
\]
We may finally apply Lemma~\ref{lem:seq} to conclude that \eqref{eq:cov1} is met.

The next step consists of getting rid of the assumption of independence of blocks. Recall that $1 < \ell_n < r$ denotes an integer sequence converging to infinity such that 
$\ell_n=o( r)$.
We may then write $\Zb_n(s,x) = \Zb_n^{+}(s,x) + \Zb_n^{-}(s,x)$, where
\begin{align*}
\Zb_n^{+}(s,x) 
&= 
\frac{1}{\sqrt{nv_n}} \sum_{j=1}^{\ip{sm}}  \sum_{i=(j-1)r+1}^{jr-\ell_n} \bm 1(X'_{n,i}> c_\infty-c(\tfrac{i}n)x) - \Pr(X'_{n,i}> c_\infty-c(\tfrac{i}n)x) \\
\Zb_n^{-}(s,x) 
&=
\frac{1}{\sqrt{nv_n}} \sum_{j=1}^{\ip{sm}}  \sum_{i=jr-\ell_n+1}^{jr} \bm 1(X'_{n,i}> c_\infty-c(\tfrac{i}n)x) - \Pr(X'_{n,i}> c_\infty-c(\tfrac{i}n)x).
\end{align*}
Further, for $n\in\N$, let $Y_{n,1}^*,\dots, Y_{n,m}^*$ denote an i.i.d.\ sequence, where $Y_{n,1}^*$ is equal in distribution to $Y_{n,1}$. Let $\Zb_n^*, \Zb_n^{\scs +,*}$ and $\Zb_n^{\scs -,*}$ be defined analogously to $\Zb_n, \Zb_n^{ +}$ and $\Zb_n^{-}$, but in terms of $Y_{n,1}^*,\dots, Y_{n,m}^*$. We will show that:
\begin{compactenum}
\item[(i)] For any $s,x\in[0,1]$, we have $\Zb_n^{-,*}(s,x) =o_\Prob(1)$ and $\Zb_n^{-}(s,x) = o_\Prob(1)$.
\item[(ii)] The fidis of $\Zb_n^{\scs +,*}$ converge weakly if and only if the fidis of $\Zb_n^+$ converge weakly. In that case, the weak limits coincide.
\end{compactenum}
As a consequence, the asymptotic distribution of the fidis of $\Zb_n$ coincides with the asymptotic distribution of the fidis of $\Zb_n^{*}$, and the latter has already been derived above.

\medskip
\noindent \textbf{Proof of (i).} 
Let us first show that $\Zb_n^{\scs -,*}(s,x) = o_\Prob(1)$, which follows if we show that $\Var(\Zb_n^{\scs -,*}(s,x)) = o(1)$. Now, by stationarity,
\begin{align}
\Exp\Big\{\sum_{i=1}^{r} \bm 1(X'_{n,i} \ne 0) \Big\}^2
&\ge \nonumber
\Exp\sum_{j=1}^{\ip{r/\ell_n}}\Big\{\sum_{i=(j-1)\ell_n+1}^{j\ell_n} \bm 1(X'_{n,i} \ne 0) \Big\}^2  \\
&= \Big\lfloor \frac{r}{\ell_n}\Big\rfloor \Exp\Big\{\sum_{i=1}^{\ell_n} \bm 1(X'_{n,i} \ne 0) \Big\}^2. \label{exp_neq_zero}
\end{align}
As a consequence, by independence of blocks, stationarity and \ref{cond:expnum},
\begin{align*}
\Var(\Zb_n^{-,*}(s,x)) 
\le 
\frac{m}{nv_n} \Var\Big(\sum_{i=1}^{\ell_n} \bm 1(X'_{n,i} \ne 0) \Big)  = O(\ell_n/r),
\end{align*}
which converges to $0$ by the assumption on $\ell_n$.

Now, consider $\Zb_n^{-}(s,x)$. 
Split the sum into two sums $\Zb_n^{\scs -,\rm even}(s,x)$ and $\Zb_n^{\scs -,\rm odd}(s,x)$, according to whether $j$ is even or odd. It suffices to show that each of these sums is $o_\Prob(1)$. We only consider the sum over the even blocks; the argumentation for the odd blocks is similar. Now, since the observations making up the even numbered blocks are separated by $r$ observations, we may follow the argumentation in \cite{Ebe84} to obtain that
\begin{align} \label{eq:dtv1}
d_{\rm TV}( P^{(Y_{n,2j})_{1\le j \le \ip{m/2}}} , P^{(Y_{n,2j}^*)_{1\le j \le \ip{m/2}}})
\le
 \ip{m/2} \beta(r),
\end{align}
where $d_{\rm TV}$ denotes the total variation distance between two probability laws.
Since $m\beta(\ell_n)=o(1)$ by \ref{cond:mixing}, the latter display is $o(1)$. As a consequence, $\Zb_n^{-,\rm even}(s,x) = \Zb_n^{\scs -,\rm even,*}(s,x)+o_\Prob(1)$. Finally, $\Zb_n^{\scs -,\rm even,*}(s,x)=o_\Prob(1)$ by the same reasoning as for $\Zb_n^{\scs -, *}$. 

\medskip
\noindent \textbf{Proof of (ii).} Note that $\Zb_n^{+}$ only depends on $(Y_{n,j}^{\scs (r-\ell_n)})_{1 \le j \le m}$, where $Y_{n,j}^{\scs (r-\ell_n)}$ consists of the first $r - \ell_n$ coordinates of   $Y_{n,j}$. A similar assertion holds for $\Zb_n^{\scs +, *}$, which is defined in terms of $((Y_{n,j}^*)^{\scs (r-\ell_n)})_{1 \le j \le m}$. The assertion in (ii) follows from the fact that
\[
d_{\rm TV}(  P^{(Y_{n,j}^{(r-\ell_n)})_{1\le j \le m}} , P^{((Y_{n,j}^*)^{(r-\ell_n)})_{1\le j \le m}})
\le
m\beta(\ell_n) \to 0
\]
by assumption and since the respective shortened blocks  are separated by $\ell_n$ observations.

\medskip

It remains to show asymptotic tightness. For that purpose, decompose $\Zb_n = \Zb_n^{\rm even} + \Zb_n^{\scs \rm odd}$ and likewise $\Zb_n^* = \Zb_n^{\scs \rm even,*} + \Zb_n^{\scs \rm odd,*}$ into sums over even and odd numbered blocks. Clearly, asymptotic tightness of $\{\Zb_n(s,x)\}_{(s,x)\in[0,1]^2}$ follows from asymptotic tightness of $\{\Zb_n^{\rm even}(s,x))\}_{(s,x)\in[0,1]^2}$ and $\{\Zb_n^{\scs \rm odd}(s,x)\}_{(s,x)\in[0,1]^2}$. We only consider the even numbered blocks. In view of \eqref{eq:dtv1}, 
it is further sufficient to show asymptotic tightness of $\{\Zb_n^{\scs \rm even,*}(s,x)\}_{(s,x)\in[0,1]^2}$. To reduce the notational complexity, we instead prove asymptotic tightness of $\{\Zb_n^{*}(s,x)\}_{(s,x)\in[0,1]^2}$. For that purpose, we apply Theorem 11.16 in \cite{Kos08}, with $t$ in that theorem replaced by $(s,x)$, and with 
\[
f_{n,j}(\omega;(s,x)) =  \bm 1( \tfrac{j}{m} \le s) \times \frac{1}{\sqrt{nv_n} }\sum_{i\in J_{n,j}} \bm 1\big(X_{n,i}^{'*}(\omega) > c_\infty - c(\tfrac{i}n) x \big),
\]
where $\omega$ is an element of the underlying probability space on which the $X_{n,i}^{'*}$ are defined.
We need to show that

\medskip
\begin{compactenum}
\item[(1)]  $\lbrace f_{n,j}: j=1, \dots,m \rbrace$ is almost measurable Suslin (AMS);
\item[(2)] the $\lbrace f_{n,j} \rbrace$ are manageable with envelopes $\{F_{n,j}\}$ given
      through 
      \[
      F_{n,j}(\omega) := \frac{1}{\sqrt{nv_n}} \sum_{i\in J_{n,j}} \bm1 (X_{n,i}^{'*}(\omega) \ne 0);
      \]
\item[(3)]  $\lim_{n \rightarrow \infty} \Exp \{\Zb_n^{*}(s,x) \Zb_n^{*}(s',x') \}$ exists for all $(s,x),(s',x') \in [0,1]^2$;
\item[(4)]  $\limsup_{n \rightarrow \infty} \sum_{j =1}^{m} \Exp F_{n,j}^2 < \infty$;
\item[(5)]  $\lim_{n \rightarrow \infty} \sum_{j =1}^{m} \Exp F_{n,j}^2 \bm1(F_{n,j}> \eps)=0$
      for all $\eps >0$;
\item[(6)]  $\rho(s,x;s',x') = \lim_{n \rightarrow \infty} \rho_n(s,x;s',x')$ exists for every
      $(s,x),(s',x') \in [0,1]^2$, where
	\begin{align} \label{eq:rhon}
	\rho_n(s,x;s',x') := \bigg\{ \sum_{j=1}^{m} \Exp \left| f_{n,j} (\cdot;s,x) -
			f_{nj}( \cdot; s',x') \right|^2 \bigg\}^{1/2}.
	\end{align}
	 [In that case, $\rho$ defines a semimetric on $[0,1]^2$.]
	Moreover, $\rho_n(s_n, x_n ; s'_n,x'_n) \rightarrow 0$ for all sequences $(s_n,x_n)_{n \in \mathbb N} $, $(s'_n,x'_n)_{n \in \mathbb N} \subset [0,1]^2$ such that $\rho(s_n,x_n;s'_n,x'_n) \rightarrow 0$.
\end{compactenum}
\medskip

\noindent \textbf{Proof of {\rm (1)}.} By Lemma 11.15 in \cite{Kos08}, the triangular array $\lbrace f_{n,j} \rbrace$ is AMS provided it is separable, that is, provided that,
for every $n \in \mathbb N$, there exists a countable subset $S_n \subset [0,1]^2$ such that
\[
\Prob^{*}\bigg(\sup_{(s,x) \in [0,1]} \inf_{(s',x') \in S_n} \sum_{j=1}^{m}  \{ f_{n,j}(\omega; s,x) -
f_{n,j}(\omega;s',x') \} ^2 >0\bigg)=0.
\]
Define $S_n := (\mathbb Q\cap[0,1])^2 $ for all $n \in \mathbb N$.
Then, for every element $\omega$ of the underlying probability space and for every $(s,x) \in[0,1]^2$, there exists 
$(s',x') \in S_n$ such that
\[
\sum_{j=1}^{m}  \{ f_{n,j}(\omega; s,x) -
f_{n,j}(\omega;s',x') \} ^2 =0.
\]

\medskip
\noindent \textbf{Proof of {\rm (2)}.}
By Theorem 11.17(iv) in \cite{Kos08}, it suffices to prove that the triangular arrays
$
\{ \tilde f_{n,j} (\omega;x) = \frac{1}{\sqrt{nv_n} }\sum_{i\in J_{n,j}} \bm 1\big(X_{n,i}^{'*}(\omega) > c_\infty - c(\tfrac{i}n) x \big):  x \in [0,1] \}_{j=1, \dots, m}
$
and
$
\{ \tilde g_{nj} (\omega;s) = \bm 1({j}/{m} \le s) : s \in [0,1]\}_{j=1, \ldots, m}
$
are manageable with respective envelopes
$\{ F_{n,j} (\omega)\}_{ j=1, \ldots,m }$ and
$\{\tilde G_{nj} (\omega) \equiv 1 \}_{ j=1, \ldots,m }$. Following the discussion on Page~221 in \cite{Kos08}, these two assertions are consequences of the fact that both $\tilde f_{n,j}$ and $\tilde g_{n,j}$ are increasing in $x$ and $s$, respectively.

\medskip
\noindent \textbf{Proof of {\rm (3), (4)} and {\rm (5)}.}
Condition (3) is simply the calculation of $\mathfrak c((s,x),(s',x'))$ above. 
Condition (4) is a consequence of \ref{cond:expnum}. Moreover, the  assumption $r=o(\sqrt{nv_n})$ in \ref{cond:mixing} implies (5).

\medskip
\noindent \textbf{Proof of {\rm (6)}.} Let
\[
\sigma^2(x,x')= d_0(x,x') + \sum_{h =1}^\infty  \big( d_h( x,  x') + d_h(x',x) \big).
\]
For $(s,x), (s',x')\in[0,1]^2$, let $\bar x = x$ if $s\ge s'$ and $\bar x=x'$ else. Then, by similar arguments that lead to \eqref{eq:cov1}, we have
\begin{align*}
& \mathrel{\phantom{=}}\rho_n^2(s,x;s',x')  \\
&=
\frac1{nv_n} \bigg\{ \sum_{j=1}^{\ip{(s\wedge s')m}} \Exp \big\{ g_{j,n,x}(Y_{n,j}) -  g_{j,n,x'}(Y_{n,j}) \big\}^2 
+   \sum_{j=\ip{(s\wedge s')m}+1}^{\ip{(s\vee s')m}} \Exp\big\{ g_{j,n, \bar x}(Y_{n,j}) \big\}^2 \bigg\} \\
&=
 c_\infty^{-1}\Big\{ C(s\wedge s')  \{ \sigma^2(x,x) - 2 \sigma^2(x,x') + \sigma^2(x',x') \}  \\
 & \hspace{4cm} + \{ C(s\vee s') -C(s\wedge s') \} \sigma^2(\bar x, \bar x) \Big\} 
+o(1) \\
&= \rho^2((s,x),(s',x')) +o(1),
\end{align*}
for any fixed $(s,x), (s',x')\in[0,1]^2$. In order to show the convergence along sequences as claimed in (6), it is sufficient to show that the convergence in the last display is in fact uniform. Note that the argumentation used for pointwise convergence does not imply uniform convergence, due to the pointwise nature of the main argument, Lemma~\ref{lem:seq}.

Let $t_j=(s_j,x_j,s'_j,x'_j) \in [0,1]^4, j=1,2$. Suppose we have shown that
\begin{align}\label{eq:rho}
|\rho_n^2(t_1) - \rho_n^2(t_2)| \lesssim H_n(t_1,t_2)  
\end{align}
with
\[
H_n(t_1, t_2)= h_0(|x_1-x_2|+q_n) +h_0( |x'_1-x'_2|+q_n) + |s_1-s_2| + |s_1'-s_2'| +q_n ,
\]
where $q_n$ denotes a sequence converging to zero (independent of $t_1, t_2$), where $h_0$ denotes a continuous, non-negative, increasing function on $[0,1]$ with $h_0(0)=0$ 
and where the symbol `$\lesssim$' means that the left-hand side is bounded by a constant multiple of the right-hand side, the constant being independent of $n, t_1, t_2$.
By pointwise convergence of $\rho_n^2$, we then also have 
\begin{align} \label{eq:rho2}
|\rho^2(t_1) - \rho^2(t_2)| \lesssim H(t_1, t_2),
\end{align} 
where
\[
H(t_1, t_2)= h_0( |x_1-x_2|) + h_0( |x'_1-x'_2|) + |s_1-s_2| + |s_1'-s_2'|.
\]

Now, let $\eps>0$ be given. Then, by uniform continuity of $h_0$, there exists $\delta>0$ such that $H_n(t_1,t_2) < \eps$ and $H(t_1,t_2)<\eps$ for all $\|t_1-t_2\|_2 < \delta$ and for all $n$ sufficiently large. Choose a finite grid of points $t^{(1)}, \dots, t^{(p)}$ such that each point $t\in[0,1]^4$ lies in the open ball of radius $\delta$ with center $t^{(j)}$, for some $j=1, \dots, p$. Then, 
\begin{align*}
|\rho_n^2(t) - \rho^2(t)|  
&\le 
|\rho_n^2(t) - \rho_n^2(t^{(j)})| + |\rho_n^2(t^{(j)}) - \rho^2(t^{(j)})|  + |\rho^2(t^{(j)}) - \rho^2(t)|   \\
&\lesssim
2 \eps +  \max_{j=1}^{p} |\rho_n^2(t^{(j)}) - \rho^2(t^{(j)})| .
\end{align*}
The upper bound does not depend on $t$, and converges to $2\eps$ for $n\to\infty$ by pointwise convergence. Since $\eps>0$ was arbitrary, we obtain that $\rho_n^2\to\rho^2$ uniformly.

It remains to show \eqref{eq:rho}. Let $s_{j}^\vee=s_j \vee s_j'$ and $s_j^\wedge=s_j \wedge s_j'$. Up to symmetry, we need to distinguish three cases:
\[
s_2^\vee \le s_1^\wedge, \quad 
s_2^\wedge \le s_1^\wedge \le s_2^\vee \le s_1^\vee, \quad
s_2^\wedge \le s_1^\wedge \le s_1^\vee \le s_2^\vee.
\]
For brevity, we only consider the first case, and make the further assumption that $s_2 < s_2' < s_1 < s_1'$. Introduce the notation $G_j(x) = g_{n,j,x}(Y_{n,j})$.
We may then write
\[
\rho_n^2(t_1) - \rho_n^2(t_2) = a_{n1}+a_{n2}+a_{n3} + a_{n4},
\]
where
\begin{align*}
a_{n1} 
&= 
\textstyle \frac1{nv_n} \sum_{j=1}^{\ip{ s_{2}m }} \Exp \Big[ \big\{G_j(x_1) - G_j(x_1') \big\}^2 - \big\{G_j(x_2) - G_j(x_2') \big\}^2  \Big], \\
a_{n2} 
&= 
\textstyle\frac1{nv_n} \sum_{\ip{ s_2m}+1}^{\ip{s_2'm} } \Exp \Big[ \big\{G_j(x_1) - G_j(x_1') \big\}^2 - \big\{G_j(\bar x_2)  \big\}^2  \Big], \\
a_{n3} 
&= 
\textstyle\frac1{nv_n} \sum_{\ip{ s_2'm}+1}^{\ip{s_1m} } \Exp \Big[ \big\{G_j(x_1) - G_j(x_1') \big\}^2 \Big], \\
a_{n4} 
&= 
\textstyle\frac1{nv_n} \sum_{\ip{ s_1m}+1}^{\ip{s_1'm} } \Exp \Big[ \big\{G_j(\bar x_1)\}^2 \Big].
\end{align*}
Note that $\Exp\{G_j(x)  \big\}^2 \le \Exp \{\sum_{i\in J_{n,1}} \bm 1(X'_{n,i} > 0 \}^2 = O(rv_n)$, uniformly in $x$ and $j=1, \dots, m$, by Condition~\ref{cond:expnum}. Hence, 
\begin{align*}
|a_{n2}| 
&\lesssim 
\frac{\ip{s_2' m}-\ip{s_2m}}{m} \le s_2'-s_2 + m^{-1} \le |s_1-s_2|+m^{-1}.
\end{align*}
Similarly, $|a_{n3}|$ and $|a_{n4}|$ are bounded by a constant multiple of $|s_1'-s_2'|+m^{-1}$. It remains to treat $|a_{n1}|$. The triangular inequality and the Cauchy-Schwarz-inequality imply that each summand of $|a_{n1}|$ can be bounded by
\begin{align*}
& \mathrel{\phantom{=}} \Exp \Big[ \big| G_j(x_1)  - G_j(x_1') + G_j(x_2) - G_j(x_2') \big| \cdot \big| G_j(x_1)  - G_j(x_1') - G_j(x_2) + G_j(x_2')\big|   \Big] \\
&\le
\Big\{\Exp\big| G_j(x_1)  - G_j(x_1') + G_j(x_2) - G_j(x_2') \big|^2\Big\}^{1/2}  \\
& \hspace{3cm} \times \bigg[ \Big\{ \Exp  \big| G_j(x_1)  - G_j(x_2) \big|^2 \Big\}^{1/2} + \Big\{ \Exp  \big| G_j(x_1')  - G_j(x_2') \big|^2 \Big\}^{1/2}  \bigg]
\end{align*}
The first factor is of the order $O((rv_n)^{1/2})$ by Condition~\ref{cond:expnum}, uniformly in $j=1, \dots, m$ and the $x$-arguments. Regarding the second factor note that, by Hölder-continuity of $c$ as assumed in Condition~\ref{cond:sbias},  we have
\[
0< c(\tfrac{j r} n) - K_c (\tfrac{r}n)^{1/2} \le c(\tfrac{i}n) \le c(\tfrac{j r } n) + K_c (\tfrac{r}n)^{1/2} \qquad \forall\ i \in J_{n,j},
\]
for sufficiently large $n$. Without loss of generality, let $x_1\le x_2$.
Then, by monotonicity and Condition~\ref{cond:hbound},
\begin{align} \label{eq:momb1}
\nonumber
&\mathrel{\phantom{=}} \Exp  \big| G_j(x_1)  - G_j(x_2) \big|^2  \\
\nonumber
&=
\Exp \Big\{ \sum_{i \in J_{n,j}}  \bm 1( c_\infty - c(\tfrac{i}n) x_1  \ge X'_{n,i} > c_\infty - c(\tfrac{i}n) x_2) \Big\}^2 \\
\nonumber
&\le 
\Exp \Big\{ \sum_{i \in J_{n,j}} \bm1( c_\infty - \{c(\tfrac{j r} n) - K_c(\tfrac{r}n)^{1/2}\} x_1 \ge X'_{n,i} > c_\infty - \{c(\tfrac{j r} n) +  K_c(\tfrac{r}n)^{1/2} \} x_2\Big\}^2 \\
\nonumber
&\le
h\big(c(\tfrac{j r}n)(x_2-x_1) + K_c(\tfrac{r}n)^{1/2}(x_1+x_2)\big)  r\tfrac{k}n \\
&\le 
h\big(c_\infty (x_2 - x_1) + 2 K_c m^{-1/2}\big)r v_n
\end{align}
As a consequence, 
\[
|a_{n1}| \lesssim h^{1/2}(c_\infty|x_1-x_2| + 2K_cm^{-1/2}) + h^{1/2}(c_\infty|x_1'-x_2'| + 2K_cm^{-1/2})
\]
which finally proves \eqref{eq:rho} with $h_0(x) =h^{1/2}(c_\infty x)$ and $q_n= 2K_cm^{-1/2}/c_\infty$.
\end{proof}

\begin{proof}[Proof of Proposition \ref{prop:step}]
Let $(s,x) \in [0,1] \times [0,L]$. Set $\varepsilon_n(x)=V(n/(kx)) = F^{-1}(1-kx/n)$ such that, almost surely,
\begin{align*} 				  
\mathbb{F}_n(s,x) 
&= 
\sqrt{k} \bigg\{ \frac{1}{k}\sum_{i=1}^{n} \bm{1}\Big\{ U_{i}^{(n)} > F_{n,i}\Big(V\Big(\frac{n}{kx}\Big)\Big)  \Big\} \I(i/n \leq s) - xC(s)  \bigg\} \\
&= 
\sqrt{k} \bigg\{ \frac{1}{k}\sum_{i=1}^{n} \bm{1}\Big\{ U_{i}^{(n)} > 1-\frac{kx}{n} \frac{1-F_{n,i}(\varepsilon_n(x))}{1-F(\varepsilon_n(x))}  \Big\} \I(i/n \leq s)  - xC(s)  \bigg\}.
\end{align*}
According to Condition \ref{cond:2nd order}, there exist $y_0 < x^{\ast}$  and $\tau > 0$ such that, for all $y > y_0, n \in \N, 1 \leq i \leq n$,
\[ 
c(i/n) \Big\{ 1- \frac{\tau}{c_{\min}} A \Big(\frac{1}{1-F(y)}\Big) \Big\} 
\le 
\frac{1-F_{n,i}(y)}{1-F(y)} 
\le 
c(i/n) \Big\{ 1 + \frac{\tau}{c_{\min}} A \Big(\frac{1}{1-F(y)}\Big) \Big\}. 
\]
Since $\varepsilon_n(x) \to x^{\ast}$, this implies, for $n$ large enough, 
\begin{align} 
\Big\{ U_i^{(n)} > 1- c(i/n) (1-\delta_n) \frac{kx}{n} \Big\}
& \subseteq \nonumber
\Big\{ U_i^{(n)} > 1- \frac{1-F_{n,i}(\varepsilon_n(x))}{1-F(\varepsilon_n(x))} \frac{kx}{n} \Big\} \\
& \subseteq \label{eq:uisubset}
\Big\{ U_i^{(n)} > 1- c(i/n) (1+\delta_n) \frac{kx}{n} \Big\},
\end{align}
where $\delta_n = \sup_{x \in (0,L]} \frac{\tau}{c_{\min}} A\big(\frac{n}{kx}\big) = \frac{\tau}{c_{\min}} A\big(\frac{n}{kL}\big)$.
As a consequence, by the definition of $\mathbb S_n$ in \eqref{eq:defsstep}, almost surely
\begin{align}
\mathbb{S}_n(s,x(1-\delta_n)) - \sqrt{k} \delta_n xC(s) \leq \mathbb{F}_n(s,x) \leq \mathbb{S}_n(s,x(1+\delta_n)) + \sqrt{k} \delta_n  xC(s). \nonumber 
\end{align}
	Therefore, 
	\begin{align*}
		\sup_{(s,x)\in [0,1] \times [0,L]} |\mathbb{F}_n(s,x)-\mathbb{S}_n(s,x)| \leq 2 w_{\delta_n}(\mathbb{S}_n) + 2 \sqrt{k}\delta_n,
	\end{align*}
	where, for $\delta > 0$,
	\begin{align} 
	w_{\delta}(\mathbb{S}_n) = \sup_{(s,y),(s,z)\in [0,1]^2: |y-z| < \delta} |\mathbb{S}_n (s,y) -\mathbb{S}_n(s,z)|. \label{def:w_delta}
	\end{align}
	Now, since $\sqrt{k}\delta_n \leq \sqrt{k}\frac{\tau}{c_{\min}}A\big(\frac{n}{kL}\big) =o(1)$ by Condition \ref{cond:2nd order}, it suffices to show that, for any $\varepsilon>0$, 
	\begin{align}
	  \limsup_{n \to \infty} \Prob(w_{\delta_n}(\mathbb{S}_n)> \varepsilon) = 0. \label{w_delta_Sn}
	\end{align}
	For arbitrary $\delta > 0$, we have
	\begin{align} 
		\Prob(w_{\delta_n}(\mathbb{S}_n) > \varepsilon) 
		&= \Prob(w_{\delta_n}(\mathbb{S}_n) > \varepsilon, \delta_n < \delta) + \Prob(w_{\delta_n}(\mathbb{S}_n) > \varepsilon, \delta_n \geq \delta) \nonumber \\
		&\leq \Prob(w_{\delta}(\mathbb{S}_n) > \varepsilon)	+ o(1).	\nonumber 
	\end{align}
	In the following we set $L=1$ in order to be able to refer to the proof of Proposition~\ref{prop:sstep} in an easier manner; the general case $L \in \N$ can again be shown analogously.
	
Consider the semimetric $\rho$ on $[0,1]^2$ defined in the proof of Proposition \ref{prop:sstep}, see \eqref{eq:rhon}.	 By Theorem 11.16 in \cite{Kos08}, we know that $[0,1]^2$ is totally bounded under $\rho$. Further, by \eqref{eq:rho2}, 
$\rho((s,y),(s,z)) \lesssim h^{\scs 1/2}_0(|y-z|) \leq h^{\scs 1/2}_0(\delta)$ for all $s,y,z \in [0,1]$ with $|y-z|<\delta$, where $h_0$ is non-decreasing and continuous with $h_0(0)=0$.
Consequently, 
\begin{align*}
& \phantom{{}={}} \lim_{\delta \downarrow 0} \limsup_{n \to \infty} \Prob(w_{\delta}(\mathbb{S}_n) > \varepsilon) \\
& \leq   \lim_{\delta \downarrow 0} \limsup_{n \to \infty} \Prob \bigg( \sup_{(s,y),(s,z)\in [0,1]^2: \rho((s,y),(s,z)) < \delta} |\mathbb{S}_n(s,y)-\mathbb{S}_n(s,z)| > \varepsilon \bigg),
\end{align*}
which equals $0$ by Theorem 7.19  and Theorem 11.16 in \cite{Kos08}, the latter being applicable because of the proof of Proposition \ref{prop:sstep}.
\end{proof}


\section{Auxiliary Results} \label{sec:auxiliary}

\begin{lemma} \label{lem:probapp}
Under the assumptions of Proposition~\ref{prop:sstep}, for any fixed $h\ge 0$, we have that
\begin{align*} 
\sup_{x,x'\in[0,L]} \sup_{i=1, \dots n}  \bigg| \frac{1}{v_n} \Prob\Big(X'_{n,i} > c_\infty - c(\tfrac{i}n) x, X'_{n,i+h}>c_\infty - c(\tfrac{i+h}n) x' \Big)  
 - \frac{c(\tfrac{i}n)}{c_\infty} d_h(x,x') \bigg|  
\end{align*}
converges to $0$ as $n\to\infty$, where $v_n=v_n(L)=\tfrac{k}nc_\infty$ with $c_\infty=c_\infty(L)$.
\end{lemma}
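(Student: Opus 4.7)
The plan is to rewrite the event in terms of the Pareto tail sequence $Z_t = 1/(1-U_t)$, invoke the joint regular variation of $(Z_0,Z_h)$ from Condition~\ref{cond:reg}, and exploit the $(-1)$-homogeneity of the associated exponent measure to identify the target $c(i/n)\,d_h(x,x')/c_\infty$. The Hölder-continuity of $c$ (Condition~\ref{cond:sbias}) will be used to collapse the dependence on $c((i+h)/n)$ to $c(i/n)$.

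Using $\{X'_{n,i} > c_\infty - c(i/n)x\} = \{U_i^{(n)} > 1 - \tfrac{k}{n}c(i/n)x\} = \{Z_i > u_n/(c(i/n)x)\}$ with $u_n = n/k \to \infty$, together with stationarity of $(U_t)$, the left-hand side rewrites as
\[
\frac{u_n}{c_\infty}\,\Prob\Big(Z_0 > u_n a_{n,1},\ Z_h > u_n a_{n,2}\Big),\qquad a_{n,1} := \tfrac{1}{c(i/n)\,x},\ a_{n,2} := \tfrac{1}{c((i+h)/n)\,x'}.
\]
By Condition~\ref{cond:reg} and \eqref{eq:tp}, $(Z_0,Z_h)$ is jointly regularly varying of index one, so for every fixed $(a_1,a_2)\in(0,\infty)^2$,
\[
u\,\Prob(Z_0 > u a_1,\ Z_h > u a_2) \longrightarrow \nu\big((a_1,\infty)\times(a_2,\infty)\big)\qquad(u\to\infty),
\]
where the exponent measure $\nu$ satisfies $\nu(t\cdot)=t^{-1}\nu(\cdot)$ and, combined with \eqref{eq:tp} and $Y_0$ standard Pareto, evaluates to $\nu((a_1,\infty)\times(a_2,\infty)) = a_1^{-1}\,\Prob(Y_h > a_2/a_1)$. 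Applied to $d_h$ via the same identity, one has $d_h(x,x') = x\,\Prob(Y_h > x/x')$.

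Substituting $a_{n,1},a_{n,2}$ yields the pointwise limit $c(i/n)\,x\,\Prob\big(Y_h > c(i/n)\,x/(c((i+h)/n)\,x')\big)$. Since $|c((i+h)/n) - c(i/n)| \le K_c (h/n)^{1/2} \to 0$ uniformly in $i$ by~\ref{cond:sbias} and since $c_{\min}>0$, the argument of $\Prob(Y_h > \cdot)$ tends to $x/x'$ uniformly in $i$ provided $x'$ is bounded away from zero. Absolute continuity of $Y_h$ on $(0,\infty)$ (noted after~\eqref{eq:tp}) makes $y\mapsto\Prob(Y_h>y)$ continuous, so the pointwise limit collapses to $c(i/n)\,x\,\Prob(Y_h>x/x') = c(i/n)\,d_h(x,x')$, and division by $c_\infty$ delivers the pointwise target.

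Uniformity will then be upgraded in two steps. For $(x,x')\in[\delta,L]^2$ with arbitrary $\delta>0$, the pairs $(a_{n,1},a_{n,2})$ lie in a fixed compact subset of $(0,\infty)^2$ uniformly in $n$ and $i$; on this compact, $u_n\Prob(Z_0 > u_n\cdot,Z_h>u_n\cdot)$ is coordinate-wise monotone decreasing and the limit $\nu((\cdot,\infty)\times(\cdot,\infty))$ is monotone and continuous, so the convergence is uniform by a Pólya-type argument. Uniformity in $i$ follows since $c(i/n)\in[c_{\min},\|c\|_\infty]$ and the Hölder bound on $c$ is uniform. For the boundary region $\min(x,x')\le\delta$, the marginal bounds $u_n\Prob(Z_0 > u_n a_{n,1}) \le c(i/n)\,x$ together with $d_h(x,x')\le \min(x,x')$ (the latter via the marginal estimate $\Prob(Y_h>1/x')\le x'$ derived from $\nu \le a_2^{-1}$) show that both sides of the display in the lemma are $O(\delta)$ uniformly in $i$ and the other coordinate. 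Letting $\delta\downarrow 0$ after $n\to\infty$ completes the argument. The main obstacle I foresee is establishing uniform convergence on compacts of $(0,\infty)^2$ of the bivariate tail probability to $\nu$; once that is secured (either by the Pólya argument above or, equivalently, by a portmanteau-type result using absolute continuity of $Y_h$), the rest is routine manipulation of the $(-1)$-homogeneity.
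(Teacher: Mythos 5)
Your proposal is correct and follows essentially the same route as the paper: both pass to the tail process of $(Z_t)$, use a P\'olya-type argument (the paper invokes an extension of Lemma 2.11 in van der Vaart, after conditioning on $X'_{n,1}>0$) to make the regular-variation limit uniform in the thresholds, exploit the spectral decomposition $Y_h=Y_0\Theta_h$ and the resulting $(-1)$-homogeneity to extract the factor $c(\tfrac in)/c_\infty$, and use the (H\"older) continuity of $c$ to replace $c(\tfrac{i+h}{n})$ by $c(\tfrac in)$. The only packaging difference is that the paper works with the conditional law of the truncated block given $X'_{n,1}>0$, whose limit is a proper distribution supported on the compact set $[0,c_\infty]^{h+1}$, so its uniformity statement covers the boundary region automatically and your separate $O(\delta)$ step for $x\wedge x'\le\delta$ is not needed.
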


\begin{proof}
First note that, as a consequence of \eqref{eq:tp} and the continuous mapping theorem, for any $\ell \in \N$ and with $c_n=c_\infty \tfrac{k}n$,
\begin{align*}
&\hspace{-1cm} \Prob\big( (X'_{n,1}  \dots, X'_{n,\ell} ) \in dx  \mid  X'_{n,1} > 0 \big) \\
=  &\,
\Prob\Big( \big(c_\infty\{1-(c_nZ_1)^{-1}\}_+, \dots, c_\infty\{1-(c_nZ_\ell)^{-1}\}_+\big) \in  dx \, \Big|\,  Z_1 > c_n^{-1}\Big) \\
\dto  &\,
\Prob( (W_1 , \dots, W_{\ell}) \in dx ),
\end{align*}
where $W_j=c_\infty(1-1/Y_{j-1})_+$. Note that $W_1$ is standard uniform on $(0,c_\infty)$ and that $W_j\ge0$ may have an atom at zero and is absolutely continuous on $(0,c_\infty)$, for $j\ge 2$. A simple extension of Lemma 2.11 in \cite{Van98} implies that
\[
\sup_{x_1, \dots, x_\ell >0 } \big| \Prob( X'_{n,1} > x_1  \dots, X'_{n,\ell}  >x_\ell  \mid  X'_{n,1} > 0 ) - \Prob(W_1 > x_1, \dots, W_\ell > x_\ell) \big| = o(1).
\]
Thus, for $h\ge 0$ fixed, by uniform continuity of $c$ and $r=o(n)$,
\begin{align*}
&\hspace{-1cm} \frac{1}{v_n} \Prob\Big(X'_{n,i} > c_\infty - c(\tfrac{i}n) x, X'_{n,i+h}>c_\infty - c(\tfrac{i+h}n) x' \Big) \\
&=
\Prob\Big(X'_{n,i} > c_\infty - c(\tfrac{i}n) x, X'_{n,i+h}>c_\infty - c(\tfrac{i+h}n) x'  \mid X'_{n,i} > 0 \Big) \\
&=
\Prob\Big(W_1 > c_\infty - c(\tfrac{i}n) x, W_{h+1} > c_\infty - c(\tfrac{i+h}n) x' \Big)   + o(1) \\
&=
\Prob\Big(W_1 > c_\infty - c(\tfrac{i}n) x, W_{h+1} > c_\infty - c(\tfrac{i}n) x' \Big)   + o(1) ,
\end{align*}
where  the $o(1)$ is uniform in $i=1, \dots, n$ and $x,x' \in[0,L]$.
Further, by the spectral decomposition of $(Y_t)_{t\in\N_0}$ (Theorem 3.1 in \citealp{BasSeg09}), that is $(Y_t)_{t\in\N_0}=(Y_0 \Theta_t)_{t\in\N_0}$ for some process $(\Theta_t)_{t\in\N_0}$ independent of $Y_0$ and with $\Theta_0=1$, we obtain, by a change of variable,
\begin{align*}
&\hspace{-1cm} \Prob\Big(W_1 > c_\infty - c(\tfrac{i}n) x, W_{h+1} > c_\infty - c(\tfrac{i}n) x' \Big)  \\
&= 
\Prob\Big(Y_0 > \frac{c_\infty }{ c(\tfrac{i}n) x} , Y_{h} > \frac{c_\infty }{ c(\tfrac{i}n) x'}\Big)   \\
&= 
\int_{1}^\infty \Prob\Big( Y_0  > \frac{c_\infty }{ c(\tfrac{i}n) x} , Y_0 \Theta_{h} > \frac{c_\infty }{ c(\tfrac{i}n) x'} \mid Y_0 = y\Big)  y^{-2} \, dy \\
&= 
\int_{\frac{c_\infty}{c(i/n)x}}^\infty \Prob\Big( \Theta_{h} > \frac{c_\infty }{ y c(\tfrac{i}n) x'} \Big) y^{-2} \, dy \\
&= 
\frac{c(\tfrac{i}n)}{c_\infty} \int_{1/x}^\infty \Prob\Big( \Theta_{h} > \frac{1}{ zx'} \Big) z^{-2} \, dz \\
&= 
\frac{c(\tfrac{i}n)}{c_\infty}  \Prob\Big(Y_0 > \frac{1}{ x} , Y_{h} > \frac{1}{x'}\Big) 
=
\frac{c(\tfrac{i}n)}{c_\infty} d_{h}(x,x'),
\end{align*} 
which implies the assertion.
\end{proof}

\begin{lemma}\label{lem:seq}
Let $(a_n)_{n\in\N}$, $(c_k)_{k\in\N}$ and $(r_{n,k})_{(n,k)\in\N^2}$ be sequences satisfying
\[
a_n=c_k+r_{n,k} \quad \text{ and } \quad \lim_{k \to \infty }\limsup_{n\to\infty} |r_{n,k}| = 0.
\] 
Then $(c_k)_{k\in\N}$ and $(a_n)_{n\in \N}$ are converging, and the respective limits are equal. 
\end{lemma}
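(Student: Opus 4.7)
The plan is to view the hypothesis as saying that $a_n - c_k = r_{n,k}$ with $\lim_k \limsup_n |a_n-c_k|=0$, and to first prove that $(c_k)_k$ is Cauchy, then identify $\lim a_n$ with $\lim c_k$. No serious obstacle is expected; the whole argument is a two-step $\varepsilon/2$ manipulation, and the only subtlety is correctly interleaving a $\limsup_n$ with a quantity that depends only on $k$.

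First I would show that $(c_k)_k$ is Cauchy. Fix $\varepsilon>0$ and choose $K\in\N$ with $\limsup_{n\to\infty}|r_{n,k}|<\varepsilon/2$ for all $k\ge K$. For any $k,k'\ge K$, the triangle inequality yields, for every $n$,
\[
|c_k-c_{k'}|\le |c_k-a_n|+|a_n-c_{k'}|=|r_{n,k}|+|r_{n,k'}|.
\]
Taking $\limsup_{n\to\infty}$ on the right-hand side (the left-hand side does not depend on $n$) gives $|c_k-c_{k'}|\le \varepsilon$, so $(c_k)_k$ is Cauchy in $\R$ and hence converges to some limit $c$.

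Next I would show $a_n\to c$. Given $\varepsilon>0$, pick $k\ge K$ large enough that additionally $|c_k-c|<\varepsilon/2$. Then for all $n$,
\[
|a_n-c|\le |a_n-c_k|+|c_k-c|=|r_{n,k}|+|c_k-c|,
\]
so that $\limsup_{n\to\infty}|a_n-c|\le \varepsilon/2+\varepsilon/2=\varepsilon$. Since $\varepsilon>0$ was arbitrary, $\limsup_n|a_n-c|=0$, which is equivalent to $a_n\to c$. This gives simultaneously $\lim_k c_k=\lim_n a_n=c$, completing the proof.
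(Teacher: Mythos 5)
Your proof is correct, and it takes a genuinely different (and arguably cleaner) route than the paper's. The paper fixes $k$, extracts a subsequence $(n_\ell)$ along which $r_{n_\ell,k}$ attains its $\limsup$ $R_k$, reads off a candidate limit $a = c_k + R_k$ for $(a_n)$, deduces $c_k \to a$ from $R_k \to 0$, and finally sandwiches $\limsup_n a_n$ and $\liminf_n a_n$ between $c_k \pm R_k$. You instead never touch subsequences: you first show $(c_k)_k$ is Cauchy by writing $|c_k - c_{k'}| \le |r_{n,k}| + |r_{n,k'}|$ and taking $\limsup_n$ of the $n$-dependent right-hand side, and then show $\limsup_n |a_n - c| \le \varepsilon$ directly. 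What your approach buys is that the limit $c$ is produced intrinsically from $(c_k)$ via completeness of $\R$, so there is no need to verify that the candidate $a$ obtained from a $k$-dependent subsequence is well defined independently of $k$ — a point the paper's argument glosses over slightly. What the paper's approach buys is brevity: once the subsequential limit $a$ is accepted, the identification $c_k = a - R_k \to a$ and the two-sided bound on $a_n$ follow in one line each. Both arguments are elementary $\varepsilon/2$ manipulations and both are complete.
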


\begin{proof}
Let $R_k= \limsup_{n\to\infty} r_{n,k}$. Along a subsequence, we have $\lim_{\ell \to \infty} r_{n_\ell,k} =R_k$. Hence, $a= \lim_{\ell \to \infty }a_{n_\ell} =c_k + R_k$ exists, and therefore $\lim_{k\to\infty} c_k = a$. Finally,
$\limsup_{n\to\infty} a_n \le c_k + R_k \to a$ and $\liminf_{n\to\infty} a_n \ge c_k - R_k \to a$ as $k\to\infty$.
\end{proof}


\section*{Acknowledgements}
This work has been supported by the Collaborative Research Center ``Statistical modeling of nonlinear dynamic processes'' (SFB 823) of the German Research Foundation, which is gratefully acknowledged.  Computational infrastructure and support were provided by the Centre for Information and Media Technology at Heinrich Heine University Düsseldorf.
The authors are grateful to Chen Zhou for helpful discussions and suggestions.

\bibliographystyle{chicago}
\bibliography{biblio}

\newpage

\thispagestyle{empty}

\begin{center}
{\bfseries SUPPLEMENTARY MATERIAL ON  \\ [0mm] ``STATISTICS FOR HETEROSCEDASTIC TIME SERIES EXTREMES''}
\vspace{.5cm}

{\small AXEL BÜCHER AND TOBIAS JENNESSEN}
\blfootnote{\today}

\end{center}

\begin{abstract}
This supplementary material contains the remaining proofs for the main paper. Proofs for Sections~\ref{sec:scedasis}-\ref{sec:EI} are presented in Sections~\ref{sec:psecdasis}-\ref{sec:pEI}, respectively.  Some auxiliary results are collected in Section~\ref{sec:auxiliary2}.
\end{abstract}

\appendix


\section{Proofs for Section \ref{sec:scedasis}} \label{sec:psecdasis}

\begin{proof}[Proof of Theorem \ref{thm:scedasis}]
Fix $s \in [0,1]$. By definition, $K_b(\cdot, 0)$ and $K_b(\cdot, 1)$ do not depend on $n$, and the same is true for $K_b(\cdot, s)$ with $s\in(0,1)$ and sufficiently large $n$; we then have $K_b(\cdot, s)=K$.
Let 
\[ 
\Psi_n(x) = k^{-1} \sum_{i=1}^{n} \I \Big( X_i^{(n)} > V\Big(\frac{n}{kx}\Big) \Big)
\]
such that $\mathbb{F}_n(1,x) = \sqrt{k}\{\Psi_n(x) -x\}$. By Proposition \ref{prop:step}, $\{\mathbb{F}_n(1,x)\}_{x \in [0,1]} \wto \{ \mathbb{S}(1,x) \}_{x \in [0,1]}$ in $(\ell^\infty([0,1]), \| \cdot \|_\infty)$. Note that $\Psi_n^{-1}(x)= nk^{-1} \big\{1-F(X_{n,n-\lfloor kx \rfloor})\big\}$, such that
\[ 
\Big\{ \sqrt{k} \Big( nk^{-1} (1-F(X_{n,n-\lfloor kx \rfloor})) -x \Big) + \mathbb{F}_n(1,x) \Big\}_{x \in [0,1]}  = \op
\] 
by the functional delta-method applied to the inverse map (see Theorem 3.9.4 in \citealp{VanWel96}). 
In particular, for $y_n = nk^{-1} \{1-F(X_{n,n-k})\}$ we obtain 
\begin{align} \label{eq:ynconv}
\sqrt{k}(y_n-1) = -\mathbb{F}_n(1,1) + \op \wto -\mathbb{S}(1,1), 
\end{align}
yielding 
\begin{align}
\Prob\big( h^{1/4} k^{1/2} |y_n-1| \leq 1 \big) \to 1. \label{as_event}
\end{align} 
Let $K_b^+(\cdot,s)$ and $K_b^-(\cdot,s)$ denote the positive and negative part of $K_b(\cdot,s)$, respectively, and define, for $y>0$,
\[
c_n^{\pm}(y,s) = \frac{1}{kh} \sum_{i=1}^{n} \I \Big( X_i^{(n)} > V\Big(\frac{n}{ky}\Big) \Big) K_b^{\pm} \Big(\frac{s-i/n}{h},s\Big), 
\]
such that $\tilde{c}_n(s) = c_n^+(y_n,s) - c_n^-(y_n,s)$. Note that $c_n^{\pm}(\cdot,s)$ is monotonically increasing; therefore on the event $\{h^{1/4} k^{1/2} |y_n-1| \leq 1\}$ in (\ref{as_event}) we have 
	\[ 
	c_n^{+}(y^-,s) - c_n^-(y^+,s) \leq c_n^+(y_n,s) - c_n^-(y_n,s) \leq c_n^+(y^+,s) - c_n^-(y^-,s).
	\] 
where	 $y^{\pm} = 1 \pm (k^{1/2}h^{1/4})^{-1}$.
	 
The proof of the theorem is finished once we have shown 
\begin{align}
& \sqrt{kh} \big\{ c_n^+(y^+,s) - c_n^-(y^-,s) - c(s) \big\} \wto \Nor(\mu_s,\sigma_s^2), \label{eq:gsp} \\ 
& \sqrt{kh} \big\{ c_n^+(y^-,s) - c_n^-(y^+,s) - c(s) \big\} \wto \Nor(\mu_s,\sigma_s^2). \label{eq:gsm}
\end{align}
We restrict ourselves to proving \eqref{eq:gsp}, the assertion in \eqref{eq:gsm} can be treated analogously.
Set 
\[ d_n^{\pm}(y,s) = \frac{1}{kh} \sum_{i=1}^{n} \I \Big( U_i^{(n)} > 1- c(i/n) \frac{ky}{n} \Big) K_b^{\pm} \Big(\frac{s-i/n}{h},s\Big) 
\]
and let us first show that 
\begin{align}
\sqrt{kh} \big\{ c_n^+(y^+,s) - c_n^-(y^-,s) - d_n^+(y^+,s) + d_n^-(y^-,s) \big\} = \op. \label{star1}
\end{align}
which  is a consequence of
\begin{align}
\sqrt{kh} \big\{ c_n^+(y^+,s) - d_n^+(y^+,s) \big\} = \op, \quad \sqrt{kh} \big\{ c_n^-(y^-,s) - d_n^-(y^-,s) \big\} = \op. \label{star1n}
\end{align}
We only prove the first assertion in \eqref{star1n}, the second one follows by similar arguments.
By the same arguments that lead to \eqref{eq:uisubset}, defining $\varepsilon_n = F^{-1}(1-ky^+/n)$, we have
	\begin{align*}
	\Big\{ U_i^{(n)} > 1- c(i/n) (1-w_n) \frac{ky^+}{n} \Big\}
	& \subseteq 
	\Big\{ U_i^{(n)} > 1- \frac{1-F_{n,i}(\varepsilon_n)}{1-F(\varepsilon_n)} \frac{ky^+}{n} \Big\} \\
	& \subseteq \Big\{ U_i^{(n)} > 1- c(i/n) (1+w_n) \frac{ky^+}{n} \Big\},
	\end{align*}
	where $w_n = \frac{\tau}{c_{\min}} A\big(\frac{1}{1-F(\varepsilon_n)}\big) = \frac{\tau}{c_{\min}} A\big(\frac{n}{ky^+}\big)$. Consequently, rewriting
	\[
	c_n^{+}(y,s) = 
	\frac{1}{kh} \sum_{i=1}^{n} \I \Big( U_i^{(n)} > F(\eps_n) \Big)
	=
	\frac{1}{kh} \sum_{i=1}^{n} \I \Big\{ U_i^{(n)} > 1- \frac{1-F_{n,i}(\varepsilon_n)}{1-F(\varepsilon_n)} \frac{ky^+}{n} \Big\},
	\]
	(which is true a.s.), we have
	\begin{align}
	d_{n,-}^+(y^+,s) \leq c_n^+(y^+,s) \leq d_{n,+}^+(y^+,s), \label{star2}
	\end{align}
	where 
	\[ 
	d_{n,\pm}^+(x,s) = \frac{1}{kh} \sum_{i=1}^{n} K_b^+\Big(\frac{s-i/n}{h},s\Big) \I \Big(U_i^{(n)} > 1- c(i/n) (1\pm w_n) \frac{kx}{n} \Big). 
	\]
	As a consequence of \eqref{star2}, the proof of the first assertion in \eqref{star1n} is finished once we show that 
	\begin{align}
	\sqrt{kh} \big\{ d_{n,\pm}^+(y^+,s) - d_n^+(y^+,s) \big\} = \op. \label{star2b}
	\end{align}
For that purpose, note that
	\begin{align*}
	&\phantom{{}={}} \Exp\big[ \sqrt{kh} |d_{n,\pm}^+(y^+,s) - d_n^+(y^+,s) | \big] \\
	&\leq  \frac{1}{\sqrt{kh}} \sum_{i=1}^{n} \! K_b^{+} \Big( \frac{s-i/n}{h} ,s \Big) \! \Exp \bigg[ \Big| \I \Big( U_i^{(n)} > 1- c(i/n) (1 \pm w_n) \frac{ky^+}{n} \Big) \\
	& \hspace{8cm} - \I \Big( U_i^{(n)} > 1- c(i/n) \frac{ky^+}{n} \Big) \Big| \bigg] \\
	&\leq  2w_n y^+ \sqrt{kh} \frac{1}{nh} \sum_{i=1}^{n} K_b^+ \Big(\frac{s-i/n}{h},s \Big) c(i/n) \\
	&=  \frac{2 \tau}{c_{\min}} y^+ \sqrt{kh} A \Big(\frac{n}{ky^+}\Big) \frac{1}{nh} \sum_{i=1}^{n} K_b^+ \Big(\frac{s-i/n}{h}, s \Big) c(i/n) \\
	&=  \frac{2 \tau}{c_{\min}} y^+ \sqrt{kh} A \Big(\frac{n}{ky^+}\Big) \Big\{ c(s) \eta_1(s) -hc'(s) \eta_2(s) + \frac{h^2}{2} c''(s) \eta_3(s) + o(h^2) + O\Big(\frac{1}{nh}\Big) \Big\}
	\end{align*}
	by Lemma \ref{lem:kernel}, where $\eta_i$ is defined as in this lemma with $K$ replaced by $K_b^+(\cdot,s)$. The term in the last line of the above display converges to zero since  $\sqrt{kh} A ( n/(ky^+) ) \leq \sqrt{k} A\big(n/(2k)\big) \to 0$ by assumption.  This proves \eqref{star2b} and hence \eqref{star1} as argued above.  
	
	In the next step, we enforce a block structure, later allowing us to apply mixing conditions and show asymptotic independence of blocks. Let $r$ from Condition \ref{cond:mixing} denote the length of a block, and for simplicity we assume $m = n/r \in \N$ (otherwise, a potential remainder block of less than $r$ observations can be shown to be asymptotically negligible). Set 
	\[ e_n^{\pm}(y,s) = \frac{1}{kh} \sum_{j=1}^{m} K_b^{\pm} \Big( \frac{s-j/m}{h},s \Big) \sum_{t \in I_j} \I \Big( U_t^{(n)} > 1 - c(\tfrac{t}{n}) \frac{ky}{n} \Big). \]
	Subsequently, we show 
	\begin{align}
	\sqrt{kh} \big\{ d_n^+(y^+,s) - d_n^-(y^-,s) - e_n^+(y^+,s) + e_n^-(y^-,s) \big\} = \op. \label{star4}
	\end{align}
	Write 
	\begin{align*}
	&\phantom{{}={}} \Exp \big[ \sqrt{kh}  \big| d_n^+(y^+,s) - e_n^+(y^+,s) \big| \big] \\
	&\leq  \frac{1}{\sqrt{kh}} \sum_{j=1}^{m} \sum_{t \in I_j} \Prob \Big( U_t^{(n)}  > 1- c(\tfrac{t}{n}) \frac{ky^+}{n} \Big) \Big| K_b^+ \Big(\frac{s-t/n}{h}, s \Big) - K_b^+ \Big(\frac{s-j/m}{h}, s \Big) \Big| \\
	&=  \sqrt{\frac{k}{h}} \frac{y^+}{n} \sum_{j=1}^{m} \sum_{l=0}^{r-1} c \Big(\frac{jr-l}{n}\Big) \Big| K_b^+ \Big(\frac{s-\frac{jr-l}{n}}{h}, s \Big) - K_b^+ \Big(\frac{s-\frac{jr}{n}}{h}, s \Big) \Big|.
	\end{align*}
	Since $K_b^{\pm}(\cdot,s)$ does not depend on $n$ for sufficiently large $n$ and is Lipschitz-continuous, say with constant $L'$, the above can be bounded by 
	\begin{align*}
	L' y^+ \sqrt{\frac{k}{h}} \frac{r}{n^2 h} \sum_{j=1}^{m} \sum_{l=0}^{r-1} c \Big( \frac{jr-l}{n} \Big) 
	& = L'y^+ \frac{k^{1/2}r}{n^2 h^{3/2}} \sum_{j=1}^{n} c(\tfrac{j}{n}) 
	\end{align*}
	which converges to zero by Condition \ref{cond:bandwidth}. Analogously, $\Exp \big[ \sqrt{kh} \big| d_n^-(y^-,s) - e_n^-(y^-,s) \big| \big] = o(1)$, implying that (\ref{star4}) holds. Together with \eqref{star1}, we have shown that
	\[ 
	\sqrt{kh} \big\{ c_n^+(y^+,s) - c_n^-(y^-,s ) -c(s) \big\} = \sqrt{kh} \big\{ e_n^+(y^+,s) - e_n^-(y^-,s ) -c(s) \big\} + \op,
	\]
	whence the assertion in \eqref{eq:gsp} is shown once we prove that
\begin{align}
H_n =  \sqrt{kh} \big\{ e_n^+(y^+,s) - e_n^-(y^-,s) - c(s) \big\} \wto \Nor(\mu_s,\sigma_s^2). \label{eq:gsp2} 
\end{align}	
The assertion in \eqref{eq:gsp2} in turn is a consequence of
\begin{align}
\lim_{n\to\infty}\Exp[H_n] = \mu_s, \qquad H_n - \Exp[H_n] \wto \Nor(0,\sigma_s^2). \label{eq:gsp3} 
\end{align}
We start by proving the assertion regarding $\Exp[H_n]$ in \eqref{eq:gsp3}. For that purpose, write
\begin{align*}
	&\phantom{{}={}} \Exp \Big[ e_n^+(y^+,s) - e_n^-(y^-,s) \Big] \\
	&=  \frac{1}{kh} \sum_{j=1}^{m} K_b^+ \Big( \frac{s-j/m}{h},s \Big) \sum_{t \in I_j} \Prob\Big( U_t^{(n)} > 1 - c(\tfrac{t}{n}) \frac{ky^+}{n} \Big) \\
	& \hspace{5cm} - K_b^- \Big( \frac{s-j/m}{h},s \Big) \sum_{t \in I_j} \Prob\Big( U_t^{(n)} > 1 - c(\tfrac{t}{n}) \frac{ky^-}{n} \Big) \\
	&= \frac{1}{nh} \sum_{j=1}^{m} \sum_{t \in I_j} c(\tfrac{t}{n}) \Big\{ K_b^+ \Big(\frac{s-j/m}{h},s \Big) y^+ - K_b^- \Big(\frac{s-j/m}{h}, s\Big) y^- \Big\} \\
	&= \frac{1}{mh} \sum_{j=1}^{m} c(\tfrac{j}{m}) \Big\{ K_b^+ \Big(\frac{s-j/m}{h},s \Big) y^+ - K_b^- \Big(\frac{s-j/m}{h},s \Big) y^- \Big\} + o\big( (kh)^{-1/2} \big),
	\end{align*}
where the $o\big( (kh)^{-1/2} \big)$-term is due to $c$ being Lipschitz-continuous and $kr^2= o(n^2h)$, which holds by Condition \ref{cond:bandwidth} and $r=o(n/k)$ from \ref{cond:mixing}.

Hence, the above calculation and Lemma \ref{lem:kernel} imply that
\begin{align*}
 \frac{1}{\sqrt{kh}} \Exp[H_n] = \ & y^+ \Big( c(s)\eta_1^+(s)-hc'(s)\eta_2^+(s) + \frac{h^2}{2} c''(s) \eta_3^+(s) + o(h^2)+O(\tfrac{1}{mh}) \Big) \\
 - & y^- \Big( c(s)\eta_1^-(s)-hc'(s)\eta_2^-(s) + \frac{h^2}{2} c''(s) \eta_3^-(s) + o(h^2)+O(\tfrac{1}{mh}) \Big) \\
 - & c(s)+ o\big( (kh)^{-1/2} \big),
\end{align*}
where $\eta_i^+$ and $\eta_i^-$ are defined as $\eta_i$ in Lemma \ref{lem:kernel} but with $K$ replaced by $K_b^+(\cdot,s)$ and $K_b^-(\cdot,s)$, respectively (note that the latter two functions do not depend on $s$ or $n$ as argued at the beginning of this proof). 
Next, note that $|y^\pm -1 | = (k^{1/2}h^{1/4})^{-1} = o \big( {1}/{\sqrt{kh}} \big)$ and $o(h^2)+O\big( {1}/{(mh)} \big) = o\big({1}/{\sqrt{kh}}\big)$ due to $k^{1/5}h \to \lambda$ and $kr^2= o(n^2h)$, which follows from Conditions \ref{cond:bandwidth} and \ref{cond:mixing}.
As a consequence, 
\begin{align}
 \Exp[H_n] &= \sqrt{kh} \Big( c(s) \big( \eta_1^+(s)-\eta_1^-(s)-1 \big) -hc'(s)\big(\eta_2^+(s)-\eta_2^-(s) \big) \nonumber \\
 & \hspace{3cm} +\frac{h^2}{2} c''(s) \big( \eta_3^+(s)-\eta_3^-(s) \big) \Big) +o(1). \label{exp_Hn}
\end{align}
Note that $K_b(\cdot,s) = K_b^+(\cdot,s)-K_b^-(\cdot,s)$. First, let $s \in (0,1)$. For $n$ large enough such that $h<s<1-h$, we have $K_b(x,s)=K(x)$, $x \in [-1,1]$, and
\begin{align*}
 \eta_1^+(s)-\eta_1^-(s) &= \int_{-1}^1 K(x) \ \mathrm{d}x=1, \quad \eta_2^+(s)-\eta_2^-(s) = \int_{-1}^1 K(x)x \ \mathrm{d}x =0, \\
 & \quad \eta_3^+(s)-\eta_3^-(s) = \int_{-1}^1 K(x)x^2  \ \mathrm{d}x = a(s).
\end{align*}
Second, for $s=1$, the construction of the boundary kernel implies \citep{Jones93}
\begin{align*}
 \eta_1^+(s)-\eta_1^-(s) &= \int_{0}^1 K_b(x,1) \ \mathrm{d}x=1, \quad 
 \eta_2^+(s)-\eta_2^-(s) = \int_{0}^1 K_b(x,1)x \ \mathrm{d}x =0, \\
\eta_3^+(s)-\eta_3^-(s) &= \int_{0}^1 K_b(x,1)x^2  \ \mathrm{d}x = a(1).
\end{align*}
And for $s=0$, we have
\begin{align*}
 \eta_1^+(s)-\eta_1^-(s) &= \int_{-1}^0 K_b(x,0) \ \mathrm{d}x=1, \quad 
 \eta_2^+(s)-\eta_2^-(s) = \int_{-1}^0 K_b(x,0)x \ \mathrm{d}x =0, \\
 \eta_3^+(s)-\eta_3^-(s) &= \int_{-1}^0 K_b(x,0)x^2  \ \mathrm{d}x = a(0).
\end{align*}
Altogether, these equalities and equation (\ref{exp_Hn}) yield $\lim_{n\to\infty} \Exp[H_n] =  \tfrac{\lambda^{5/2}}{2} c''(s) a(s) = \mu_s$ for any $s \in [0,1]$, as asserted in \eqref{eq:gsp3}, where we again used $k^{1/5}h \to \lambda$ from \ref{cond:bandwidth}.

Next, consider the assertion on the right-hand side of \eqref{eq:gsp3}. For that purpose, recall $c_\infty=c_\infty(2) = 1+2\|c\|_\infty$
and $X'_{n,i}$ from \eqref{eq:xnprime} with $L=2$. We may then rewrite $e_n^\pm$ as
\[ 
e_n^\pm(y^\pm, s) = \frac{1}{kh} \sum_{j=1}^{m} K_b^\pm \Big(\frac{s-j/m}{h},s\Big) \sum_{t \in I_j} \I \big( X'_{n,t} > c_{\infty}- c(\tfrac{t}{n}) y^\pm \big). 
\]
We are going to apply a big-block-small-block technique. For that purpose, let
\[ 
I_j^B = \{ (j-1)r+1,\ldots,jr-\ell_n \}, \quad I_j^S = \{ jr-\ell_n+1,\ldots,jr \}, 
\]
	where the sequence $(\ell_n)_n$ is from Condition \ref{cond:mixing}. 
	Set 
	\begin{align*}
	e_{n,B}^\pm(y,s) = \frac{1}{kh} \! \sum_{j=1}^{m} \! K_b^\pm \Big( \frac{s-j/m}{h},s \Big) \! \sum_{t \in I_j^B} \! \I \big( X'_{n,t} > c_\infty -c(\tfrac{t}{n})y \big) \! - \! \Prob\big( X'_{n,t} > c_\infty -c(\tfrac{t}{n})y \big), \\
	e_{n,S}^\pm(y,s) = \frac{1}{kh} \! \sum_{j=1}^{m} \! K_b^\pm \Big( \frac{s-j/m}{h},s \Big) \! \sum_{t \in I_j^S} \! \I \big( X'_{n,t} > c_\infty -c(\tfrac{t}{n})y \big) \! - \! \Prob\big( X'_{n,t} > c_\infty -c(\tfrac{t}{n})y \big).
	\end{align*}
As a consequence, we may write
	\begin{align*} 
	H_n - \Exp[H_n] 
	=  \sqrt{kh} \big\{ e_{n,B}^+(y^+,s) - e_{n,B}^-(y^-,s ) \big\} + \sqrt{kh} \big\{ e_{n,S}^+(y^+,s) - e_{n,S}^-(y^-,s ) \big\},
	\end{align*}
whence the assertion on the right-hand side of \eqref{eq:gsp3} follows if we prove that
	\begin{align}
	H_{n1} & := \sqrt{kh}\big\{ e_{n,S}^+(y^+,s)-e_{n,S}^-(y^-,s)\big\} = \op, \label{eq:csb} \\
	H_{n2} & := \sqrt{kh} \big\{ e_{n,B}^+(y^+,s) - e_{n,B}^-(y^-,s ) \big\}   \wto \Nor(0,\sigma_s^2) \label{eq:csb2}
	\end{align}
	
	We start by proving \eqref{eq:csb}, for which it suffices to show that $\Var\big(\sqrt{kh}\big\{e_{n,S}^+(y^+,s)-e_{n,S}^-(y^-,s)\big\}\big) = o(1)$. For $n \in \N$ and $j\in\{1,\ldots,m\}$, let $V_{n,j} = (X'_{n,t})_{t \in I_j^S}$, and note that $e_{n,S}^{\pm}$ is a function of  $(V_{n,j})_{j=1,\ldots,m}$. Further, let $(V_{n,j}^{\ast})_{j=1,\ldots,m}$ denote an i.i.d.\ sequence, where $V_{n,j}^{\ast}$ is equal in distribution to $V_{n,j}$. Finally, let $e_{n,S}^{\pm, \ast}$ be defined as $e_{n,S}^{\pm}$, but in terms of $(V_{n,j}^{\ast})_{j=1,\ldots,m}$ instead of $(V_{n,j})_{j=1,\ldots,m}$ . First, we show the assertion in \eqref{eq:csb} with $e_{n,S}^{\pm}$ replaced by $e_{n,S}^{\pm,\ast}$.
By independence of blocks, we may write   
\begin{align}
&\phantom{{}={}} 
\Var\big(\sqrt{kh}\big\{ e_{n,S}^{+,\ast}(y^+,s)-e_{n,S}^{-,\ast}(y^-,s)\big\} \big) \nonumber \\
&\leq  
\frac{1}{kh} \sum_{j=1}^{m} \Exp \bigg[ \bigg\{ K_b^+\Big(\frac{s-j/m}{h},s\Big) \sum_{t \in I_j^S} \I \big(X'_{n,t} > c_\infty - c(\tfrac{t}{n})y^+\big) \nonumber \\
& \hspace{3cm} 
- K_b^-\Big(\frac{s-j/m}{h},s\Big) \sum_{t \in I_j^S} \I \big(X'_{n,t} > c_\infty - c(\tfrac{t}{n})y^-\big) \bigg\}^2 \bigg] \nonumber \\
&\leq  
\frac{1}{kh} \sum_{j=1}^{m} \bigg\{ K_b^+\Big(\frac{s-j/m}{h},s\Big)^2 \Exp \bigg[ \bigg\{ \sum_{t \in I_j^S} \I \big(X'_{n,t} > c_\infty - c(\tfrac{t}{n})y^+\big) \bigg\}^2 \bigg] \nonumber \\
& \hspace{3cm} 
+ K_b^-\Big(\frac{s-j/m}{h},s\Big)^2 \Exp \bigg[ \bigg\{ \sum_{t \in I_j^S} \I \big(X'_{n,t} > c_\infty - c(\tfrac{t}{n})y^-\big) \bigg\}^2 \bigg] \bigg\} \nonumber \\
&\leq  
\frac{1}{kh} \Exp \bigg[ \bigg\{ \sum_{t \in I_1^S} \I(X'_{n,t} \neq 0) \bigg\}^2 \bigg] \sum_{j=1}^{m} K_b^+\Big(\frac{s-j/m}{h},s\Big)^2 + K_b^-\Big(\frac{s-j/m}{h},s\Big)^2, \label{star6}
\end{align}
where the last step is due to stationarity. As in (\ref{exp_neq_zero}), we have 
\[ 
\Exp \bigg[ \bigg\{ \sum_{t \in I_1^S} \I(X'_{n,t} \neq 0) \bigg\}^2 \bigg] \leq \frac{\ell_n}{r} \Exp \bigg[ \bigg\{ \sum_{t = 1}^{r} \I(X'_{n,t} \neq 0) \bigg\}^2 \bigg] = O\Big(\frac{\ell_nk}{n}\Big),
\]
where the last bound follows from Condition \ref{cond:expnum}.
As a consequence, the expression in (\ref{star6}) can be bounded by 
	\begin{align} 
	&O\Big(\frac{\ell_n m}{n} \Big) \frac{1}{mh} \sum_{j=1}^{m} K_b^+\Big(\frac{s-j/m}{h},s\Big)^2 + K_b^-\Big(\frac{s-j/m}{h},s\Big)^2  \nonumber \\
	 & \hspace{3cm} = O \Big(\frac{\ell_n}{r}\Big) \bigg\{ \int_{\frac{s-1}{h}}^{s/h} K_b^+(x,s)^2 + K_b^-(x,s)^2 \ \mathrm{d}x + O\Big(\frac{1}{mh}\Big) \bigg\}, \label{sum_K+K-} 
	\end{align}
	which converges to zero due to $\ell_n=o(r)$ and $mh \to \infty$, since $kh \to \infty$ and $m \gg k$ by $r=o(n/k)$ in Condition \ref{cond:bandwidth} and \ref{cond:mixing}, respectively.
	Hence, $\sqrt{kh}\big\{e_{n,S}^{+,\ast}(y^+,s) - e_{n,S}^{-,\ast}(y^-,s)\big\}=\op$. The same argumentation that was used in the proof of Proposition~\ref{prop:sstep}  can be used to deduce \eqref{eq:csb}.
	
It remains to prove \eqref{eq:csb2}. For that purpose, write
\[ 
H_{n2} = \sqrt{kh}\big\{ e_{n,B}^{+}(y^+,s) - e_{n,B}^{-}(y^-,s) \big \} = \frac{1}{\sqrt{kh}} \sum_{j=1}^{m} f_{j,n}(s), 
\]
where, for $j\in\{1,\ldots,m\}$, 
\begin{align*}
f_{j,n}(s)  =  K_b^+ \Big(\frac{s-j/m}{h},s\Big) \sum_{t \in I_j^B} &\Big\{ \I \big( X'_{n,t} > c_\infty - c(\tfrac{t}{n}) y^+ \big) - \Prob\big(X'_{n,t} > c_\infty -c(\tfrac{t}{n}) y^+\big) \Big\} \\
 -  K_b^- \Big(\frac{s-j/m}{h},s\Big) \sum_{t \in I_j^B} &\Big\{ \I \big( X'_{n,t} > c_\infty - c(\tfrac{t}{n}) y^- \big) - \Prob\big(X'_{n,t} > c_\infty -c(\tfrac{t}{n}) y^-\big) \Big\}. 
\end{align*}
Note that $f_{j,n}(s)$ is centered and depends on the block $I_j^B$ only, such that the observations making up $f_{j,n}(s)$ and $f_{i,n}(s)$ are separated by at least $\ell_n$ observations for $j \neq i$. By the same arguments given in the proof of Proposition \ref{prop:sstep} we can assume that $f_{1,n}(s), \ldots, f_{m,n}(s)$ are independent. As a consequence, we may apply the classical Lindeberg Central Limit Theorem. The Lindeberg condition is satisfied, if for any $\varepsilon > 0$, 
	\[ \lim_{n \to \infty} \frac{1}{kh} \sum_{j=1}^{m} \Exp \Big[ f_{j,n}(s)^2 \I \big( |f_{j,n}(s)| > \varepsilon \sqrt{kh} \big) \Big] = 0. \]
	Since $|f_{j,n}(s)| \lesssim r-\ell_n \leq r$, the Lindeberg condition already follows from the assumption $r=o(\sqrt{kh})$ in \ref{cond:bandwidth}, see Corollary 3.6 in \cite{DreRoo10} for a similar argumentation.
	
It remains to prove that $\lim_{n\to\infty}\Var(H_{n2})=\sigma_s^2$. Let
\[ 
d_{n,j}(y) = \sum_{t \in I_j^B} \I \big( X'_{n,t} > c_\infty -c(\tfrac{t}{n}) y \big), 
\]
such that
\begin{align*}
\Var(H_{n2}) &=  
\frac{1}{kh} \sum_{j=1}^{m} \Var \Big( K_b^+ \Big(\frac{s-j/m}{h},s\Big) d_{n,j}(y^+) - K_b^- \Big(\frac{s-j/m}{h},s\Big) d_{n,j}(y^-) \Big) \\
&=  \frac{1}{kh} \sum_{j=1}^{m} \Exp \bigg[ \bigg\{ K_b^+ \Big(\frac{s-j/m}{h},s\Big) d_{n,j}(y^+) - K_b^- \Big(\frac{s-j/m}{h},s\Big) d_{n,j}(y^-) \bigg\}^2 \bigg] \\
& \hspace{1cm} - \Exp \bigg[ K_b^+ \Big(\frac{s-j/m}{h},s\Big) d_{n,j}(y^+) - K_b^- \Big(\frac{s-j/m}{h},s\Big) d_{n,j}(y^-) \bigg]^2 \\
&=:  A_n - B_n. 
\end{align*}
By stationarity, 
\[ 
| \Exp[ d_{n,j}(y^\pm) ] | \leq \sum_{t \in I_j^B} \Prob(X'_{n,t} \neq 0) = (r-\ell_n)\frac{k}{n} c_{\infty}(L), 
\]
implying 
\begin{align*}
|B_n| & \leq \frac{1}{kh} \sum_{j=1}^{m} \bigg\{ K_b^+ \Big(\frac{s-j/m}{h},s\Big)^2 \Exp[d_{n,j}(y^+)]^2 + K_b^- \Big(\frac{s-j/m}{h},s\Big)^2 \Exp[d_{n,j}(y^-)]^2 \bigg\} \\
& \leq c_\infty(L)^2 \frac{k}{m} \frac{1}{mh} \sum_{j=1}^{m} K_b^+ \Big(\frac{s-j/m}{h},s\Big)^2 + K_b^- \Big(\frac{s-j/m}{h},s\Big)^2, 
\end{align*}
which converges to zero by the previous calculation in (\ref{sum_K+K-})  and $k/m=o(1)$ by Condition \ref{cond:mixing}. As a consequence, 
\begin{align} \label{eq:varhn2}
\Var(H_{n2})=A_n + o(1).
\end{align}

Next, write $A_n = m^{-1} \sum_{j=1}^{m} A_n(j)$, where
\[
A_n(j) = \frac{m}{kh} \Exp \bigg[ \bigg\{ K_b^+ \Big(\frac{s-j/m}{h},s\Big) d_{n,j}(y^+) - K_b^- \Big(\frac{s-j/m}{h},s\Big) d_{n,j}(y^-) \bigg\}^2 \bigg]. 
\]
Recall the definition of the length of the core of a cluster $y$, denoted by $L(y)$, see Definition 2.1 in \cite{DreRoo10}. For some constant $K>0$, writing $Y_{n,j} = (X'_{n,t})_{t \in I_j^B}$ for $j\in\{1,\ldots,m\}$, we may then decompose $A_{n}(j) = S_{n,K}(j)+R_{n,K}(j)$, where 
\begin{align*}
& \hspace{-.2cm}S_{n,K}(j) \\
& = \frac{m}{kh} \Exp \bigg[ \bigg\{ K_b^+ \Big(\frac{s-j/m}{h},s\Big) d_{n,j}(y^+) - K_b^- \Big(\frac{s-j/m}{h},s\Big) d_{n,j}(y^-) \bigg\}^2 \I\big(L(Y_{n,j})\leq K \big) \bigg], \\
& \hspace{-.2cm}R_{n,K}(j) \\
& = \frac{m}{kh} \Exp \bigg[ \bigg\{ K_b^+ \Big(\frac{s-j/m}{h},s\Big) d_{n,j}(y^+) - K_b^- \Big(\frac{s-j/m}{h},s\Big) d_{n,j}(y^-) \bigg\}^2 \I\big(L(Y_{n,j}) > K \big) \bigg].
\end{align*}
We have 
\begin{align*}
R_{n,K}(j) & \leq \frac{m}{kh} K_b^+ \Big(\frac{s-j/m}{h},s\Big)^2 \Exp \Big[ d^2_{n,j}(y^+) \I \big(L(Y_{n,j}) > K\big) \Big] \\
& \ + \frac{m}{kh} K_b^- \Big(\frac{s-j/m}{h},s\Big)^2 \Exp \Big[ d^2_{n,j}(y^-) \I\big(L(Y_{n,j}) > K \big) \Big].
\end{align*}
The two summands summands on the right-hand side can be written as
\begin{align*}
&\phantom{{}={}} 
\frac{m}{kh} K_b^\pm \Big( \frac{s-j/m}{h},s \Big)^2 \sum_{q,t \in I_j^B} \Prob \Big( X'_{n,t} > c_\infty - c(\tfrac{t}{n}) y^\pm, \\
& \hspace{7cm}
X'_{n,q} > c_\infty - c(q/n) y^\pm, L(Y_{n,j}) > K \Big) \\
&\leq  
\frac{m}{kh} K_b^\pm \Big( \frac{s-j/m}{h},s \Big)^2 \sum_{q,t \in I_j^B} \Prob \Big( X'_{n,t} > 0, X'_{n,q} > 0, L(Y_{n,j}) > K \Big) \\
&= 
\frac{m}{kh} K_b^\pm \Big( \frac{s-j/m}{h},s \Big)^2 \Exp \Big[ \Big\{ \sum_{t=1}^{r} \I(X'_{n,t}>0) \Big\}^2 \I \big(L(Y_{n,1}) > K \big) \Big] \\
&\leq  
\frac{1}{h} K_b^\pm \Big( \frac{s-j/m}{h},s \Big)^2 \bigg\{ \frac{m}{k} \Exp \Big[ \Big\{ \sum_{t=1}^{r} \I(X'_{n,t}>0) \Big\}^{2+\delta} \Big] \bigg\}^{\frac{2}{2+\delta}} \Big\{ \frac{m}{k} \Prob \big( L(Y_{n,1}) > K \big)  \Big\}^{\frac{\delta}{2+\delta}}
\end{align*}
by Hölder's inequality. Consequently, we obtain 
\begin{align*}
\frac{1}{m} \sum_{j=1}^{m} R_{n,K}(j) & \leq \bigg\{ \frac{m}{k} \Exp \Big[ \Big\{ \sum_{t=1}^{r} \I(X'_{n,t}>0) \Big\}^{2+\delta} \Big] \bigg\}^{\frac{2}{2+\delta}} \Big\{ \frac{m}{k} \Prob \big( L(Y_{n,1}) > K \big)  \Big\}^{\frac{\delta}{2+\delta}}\\
& \ \ \times \frac{1}{mh} \sum_{j=1}^{m} K_b^+ \Big( \frac{s-j/m}{h},s \Big)^2 + K_b^- \Big( \frac{s-j/m}{h},s \Big)^2.
\end{align*}
By Condition \ref{cond:expnum} and Lemma 5.2 (vii) in \cite{DreRoo10}, which is applicable by Condition \ref{cond:mixing} (note that their $v_n$ is $v_n = \Pr(X'_{n,i} \ne 0) = \tfrac{k}{n} c_\infty(2)$ in our notation), we have 
\begin{align} \label{eq:crnk1}
\lim_{K \to \infty} \limsup_{n \to \infty} \frac{1}{m} \sum_{j=1}^{m} R_{n,K}(j) = 0. 
\end{align}
Next, consider the term $S_{n,K}(j)$ with $j\in\{1,\ldots,m\}$, which may be written as
	\begin{align*}
	&\phantom{{}={}} S_{n,K}(j)  \\
	&= \frac{m}{kh} \bigg\{ 
	K_b^+ \Big( \frac{s-j/m}{h},s \Big)^2 \Exp \Big[ d^2_{n,j}(y^+) \I \big( L(Y_{n,j}) \leq K \big) \Big] \\
	& \hspace{1cm} + K_b^- \Big( \frac{s-j/m}{h},s \Big)^2 \Exp \Big[ d^2_{n,j}(y^-) d_{n,j}(y^-) \I \big( L(Y_{n,j}) \leq K \big) \Big] \\
	& \hspace{1cm} -2 K_b^+ \Big( \frac{s-j/m}{h},s \Big) K_b^- \Big( \frac{s-j/m}{h},s \Big) \Exp \Big[ d_{n,j}(y^+) d_{n,j}(y^-) \I \big( L(Y_{n,j}) \leq K \big) \Big] \bigg\} \\
	& = \frac{m}{kh} \bigg\{ 
	K_b^+ \Big( \frac{s-j/m}{h},s \Big)^2 \! \! \sum_{q,t \in I_j^B, |q-t| \leq K} \! \!  \Prob \Big( X'_{n,t} > c_\infty -c(\tfrac{t}{n}) y^+, \\
	& \hspace{7.5cm} X'_{n,q} > c_\infty -c(\tfrac{q}{n}) y^+, L(Y_{n,j}) \leq K \Big) \\
	& \hspace{1cm} + K_b^- \Big( \frac{s-j/m}{h},s \Big)^2 \! \!  \sum_{q,t \in I_j^B, |q-t| \leq K} \! \!  \Prob \Big( X'_{n,t} > c_\infty -c(\tfrac{t}{n}) y^-, \\
	& \hspace{7.5cm} X'_{n,q} > c_\infty -c(\tfrac{q}{n}) y^-, L(Y_{n,j}) \leq K \Big) \\
	& \hspace{1cm} -2 K_b^+ \Big( \frac{s-j/m}{h},s \Big) K_b^- \Big( \frac{s-j/m}{h},s \Big) \! \!  \sum_{q,t \in I_j^B, |q-t| \leq K} \! \! \Prob \Big( X'_{n,t} > c_\infty -c(\tfrac{t}{n}) y^+, \\
	& \hspace{7cm} X'_{n,q} > c_\infty -c(\tfrac{q}{n}) y^-, L(Y_{n,j}) \leq K \Big) \bigg\}.
	\end{align*}
Let $S_{n,K}'(j)$ be defined exactly as the right-hand side of the previous display, but with the probability terms replaced by 
\begin{align*} 
& \Prob \Big( X'_{n,t} > c_\infty -c(\tfrac{t}{n}) y, X'_{n,q} > c_\infty -c(\tfrac{q}{n}) y' \Big).
\end{align*}
for $y,y' \in \{y^+,y^-\}$ (i.e., we omit the additional condition $L(Y_{n,j})\le K$ everywhere). Further, let $R_{n,K}'(j) = S_{n,K}'(j) - S_{n,K}(j)$. By the same arguments that were used for $R_{n,K}$ above, one can show that 
\begin{align} \label{eq:crnk2}
\lim_{K \to \infty} \limsup_{n \to \infty} \frac{1}{m} \sum_{j=1}^{m} R'_{n,K}(j) = 0. 
\end{align}
Regarding the remaining terms $S_{n,K}'(j)$ we obtain,  by uniform continuity of $c$ and Lem\-ma~\ref{lem:probapp},
\begin{align*}
S_{n,K}'(j) 
&= S''_{n,K}(j)+ R_n''(j) o(1), 
\end{align*}
where the $o(1)$ is uniform in $j=1,\ldots,m$, where
\[  
 R_n''(j) = \frac{1}{h} K_b^+ \Big( \frac{s-\tfrac{j}{m}}{h},s\Big)^2 \! \! + \frac{1}{h} K_b^-\Big(\frac{s-\tfrac{j}{m}}{h},s\Big)^2 \! - \! \frac{2}{h} K_b^+\Big(\frac{s-\tfrac{j}{m}}{h},s\Big) K_b^-\Big(\frac{s-\tfrac{j}{m}}{h},s\Big)
\]
and where
\begin{align*}
S''_{n,K}(j) &= 
\frac{1}{h} K_b^+\Big(\frac{s-\tfrac{j}{m}}{h},s\Big)^2  \frac{1}{r} \sum_{t \in I_j^B} c(\tfrac{t}{n}) \Big\{ d_0(y^+,y^+) + 2 \sum_{q=1}^{K \wedge (jr-\ell_n-t)} d_q(y^+,y^+) \Big\} \\
&\quad + 
\frac{1}{h} K_b^-\Big(\frac{s-\tfrac{j}{m}}{h},s\Big)^2 \frac{1}{r} \sum_{t \in I_j^B} c(\tfrac{t}{n}) \Big\{ d_0(y^-,y^-) + 2 \sum_{q=1}^{K \wedge (jr-\ell_n-t)}  d_q(y^-,y^-) \Big\} \\
&\quad - 
\frac{2}{h} K_b^+\Big(\frac{s-\tfrac{j}{m}}{h},s\Big) K_b^-\Big(\frac{s-\tfrac{j}{m}}{h},s\Big) \\
& \hspace{1cm} 
\cdot \frac{1}{r} \sum_{t \in I_j^B} c(\tfrac{t}{n}) \Big\{ d_0(y^+,y^-) + \sum_{q=1}^{K \wedge (jr-\ell_n-t)} d_q(y^+,y^-) + d_q(y^-,y^+)  \Big\}.
\end{align*}
By Lemma \ref{lem:kernel} (and a straightforward extension of this lemma to the case of a product of kernels) we have $m^{-1} \sum_{j=1}^m R_n''(j)=O(1)$.
Moreover,
\begin{align*}
S''_{n,K}(j) 
& = \frac{1}{h} K_b^+\Big(\frac{s-\tfrac{j}{m}}{h},s\Big)^2 c(\tfrac{j}{m}) D_K(y^+,y^+) + \frac{1}{h} K_b^-\Big(\frac{s-\tfrac{j}{m}}{h},s\Big)^2 c(\tfrac{j}{m}) D_K(y^-,y^-) \\
& \ \ -  \frac{2}{h} K_b^+\Big(\frac{s-\tfrac{j}{m}}{h},s\Big) K_b^-\Big(\frac{s-\tfrac{j}{m}}{h},s\Big) c(\tfrac{j}{m}) D_K(y^+,y^-) + o(1),
\end{align*}
where the $o(1)$ is uniform in $j=1,\ldots,m$ (and in $y^+,y^- \in [0,2]$ as arbitrary inputs), and 
\[ 
D_K(x,x') = d_0(x,x') + \sum_{q=1}^{K} d_q(x,x') + d_q(x',x) .
\]
Therefore, 
\begin{align*}
&\phantom{{}={}} 
\frac{1}{m} \sum_{j=1}^{m} S_{n,K}'(j) \\ 
&=  \frac{1}{mh} \sum_{j=1}^{m} c(\tfrac{j}{m}) \Big\{ K_b^+\Big(\frac{s-\tfrac{j}{m}}{h},s\Big)^2 D_K(y^+,y^+) + K_b^-\Big(\frac{s-\tfrac{j}{m}}{h},s\Big)^2 D_K(y^-,y^-) \\
& \hspace{2.7cm} - 2 K_b^+\Big(\frac{s-\tfrac{j}{m}}{h},s\Big) K_b^-\Big(\frac{s-\tfrac{j}{m}}{h},s\Big) D_K(y^+,y^-) \Big\} + o(1),
\end{align*}
which converges to $c(s)\eta(s)D_K(1,1)$ by a straightforward extension of Lemma \ref{lem:kernel} to the case of a product of kernels. Further, note that $D_K(y,y')$ converges to $D_K(1,1)$ for $y,y'\in \{y^+,y^-\}$, since $d_q$ is continuous in $(1,1)$ by Theorem 2 and its subsequent discussion in \cite{Seg03}.  Finally, since 
\begin{align*}
A_n &= \frac1{m} \sum_{j=1}^m A_{n}(j) 
=
\frac{1}{m} \sum_{j=1}^{m} S_{n,K}'(j) - \frac{1}{m} \sum_{j=1}^{m} R_{n,K}'(j) + \frac{1}{m} \sum_{j=1}^{m} R_{n,K}(j) 
\end{align*} 
Lemma \ref{lem:seq} and \eqref{eq:crnk1} and \eqref{eq:crnk2} imply 
$
\lim_{n\to\infty}\Var(H_{n2})= \lim_{n \to \infty} A_n 
= 
\sigma_s^2,
$
where we used \eqref{eq:varhn2}.
\end{proof}

\begin{lemma} \label{lem:kernel}
	Assume $c \in C^2([0,1])$. Let $K$ be a Lipschitz-continuous function on $[-1,1]$ with $K(x) = 0$ for $|x| > 1$. Further, let $h=h_n>0$ satisfy $h\to 0$ and $nh\to\infty$ for $n\to\infty$. Then, for any $s \in [0,1]$, as $n \to \infty$, 
	\begin{align*}
	& \frac{1}{nh} \sum_{i=1}^{n} K \Big(\frac{s-i/n}{h}\Big) c(i/n) = c(s) \eta_1(s) - hc'(s)\eta_2(s) + \frac{h^2}{2} c''(s) \eta_3(s) + o(h^2)+ O\Big(\frac{1}{nh}\Big), \\
	& \frac{1}{nh} \sum_{i=1}^{n} K^2\Big(\frac{s-i/n}{h}\Big) c(i/n) = c(s) \eta_4(s) + O(h) + O\Big(\frac{1}{nh}\Big).
	\end{align*}
	where
	{\small\begin{align*}
	\eta_1(s) & = \I(s \! \leq \! h) \int_{-1}^{s/h} \! K(x) \mathrm{d}x + \I(h \! <\!s\!<\!1-h) \int_{-1}^{1} \! K(x) \mathrm{d}x + \I(s \! \geq \! 1-h) \int_{\frac{s-1}{h}}^{1} \! K(x) \mathrm{d}x, \\
	\eta_2(s) & = \I(s \! \leq \! h) \int_{-1}^{s/h} \! K(x)x \mathrm{d}x + \I(h \! <\!s\!<\!1-h) \int_{-1}^{1} \! K(x)x \mathrm{d}x + \I(s \! \geq \! 1-h) \int_{\frac{s-1}{h}}^{1} \! K(x)x \mathrm{d}x, \\
	\eta_3(s) & = \I(s \! \leq \! h) \int_{-1}^{s/h} \! K(x)x^2 \mathrm{d}x + \I(h \! <\!s\!<\!1-h) \int_{-1}^{1} \! K(x)x^2 \mathrm{d}x + \I(s \! \geq \! 1-h) \int_{\frac{s-1}{h}}^{1} \! K(x)x^2 \mathrm{d}x, \\
	\eta_4(s) & = \I(s \! \leq \! h) \int_{-1}^{s/h} \! K^2(x) \mathrm{d}x + \I(h \! <\!s\!<\!1-h) \int_{-1}^{1} \! K^2(x) \mathrm{d}x + \I(s \! \geq \! 1-h) \int_{\frac{s-1}{h}}^{1} \! K^2(x) \mathrm{d}x.
	\end{align*}}
\end{lemma}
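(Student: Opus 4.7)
The plan is to carry out three standard steps in sequence: a Riemann-sum approximation, a change of variables, and a second-order Taylor expansion of $c$.

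First, I would write $f_n(u) = \frac{1}{h} K\bigl(\frac{s-u}{h}\bigr) c(u)$, so that the sum of interest equals $\frac{1}{n}\sum_{i=1}^{n} f_n(i/n)$. Because $K$ is Lipschitz on $[-1,1]$ and vanishes outside, and because $c \in C^2([0,1])$ is Lipschitz on $[0,1]$, the function $f_n$ is Lipschitz with constant $O(1/h^2)$ and is supported on an interval of length at most $2h$. A standard per-subinterval bound (using the Lipschitz estimate $|f_n(u)-f_n(i/n)| \le \mathrm{Lip}(f_n)/n$ on $[(i-1)/n,i/n]$, summed only over the $O(nh)$ indices whose subinterval meets the support of $f_n$) gives
\[
\frac{1}{n}\sum_{i=1}^{n} f_n(i/n) = \int_0^1 f_n(u)\,du + O\Bigl(\frac{1}{nh}\Bigr),
\]
where the error is uniform in $s \in [0,1]$.

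Second, I would substitute $u = s - hx$, which yields
\[
\int_0^1 f_n(u)\,du = \int_{J_h(s)} K(x)\,c(s-hx)\,dx,
\]
where $J_h(s) \subset [-1,1]$ is the $s$-dependent range obtained by intersecting $[s-h,s+h]$ with $[0,1]$ and rescaling: $J_h(s) = [-1, s/h]$ for $s \le h$, $J_h(s) = [-1,1]$ for $h < s < 1-h$, and $J_h(s) = [(s-1)/h, 1]$ for $s \ge 1-h$. Third, since $c \in C^2([0,1])$ and hence $c''$ is uniformly continuous on $[0,1]$, Taylor's theorem gives
\[
c(s-hx) = c(s) - hx\,c'(s) + \tfrac{(hx)^2}{2}c''(s) + h^2 x^2 \varepsilon_n(s,x),
\]
with $\sup_{s\in[0,1],\,x\in[-1,1]} |\varepsilon_n(s,x)| \to 0$ as $n\to\infty$. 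Multiplying by $K(x)$ and integrating over $J_h(s)$ produces exactly $c(s)\eta_1(s) - hc'(s)\eta_2(s) + \tfrac{h^2}{2}c''(s)\eta_3(s)$ plus a remainder bounded by $h^2 \|K\|_\infty \int_{-1}^1 x^2 \,dx \cdot \sup |\varepsilon_n| = o(h^2)$, which together with the Riemann-sum error from the first step yields the claimed expansion.

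The second identity is proved identically: replace $K$ by $K^2$, which is again Lipschitz, bounded and supported in $[-1,1]$, use only the zeroth-order Taylor expansion $c(s-hx) = c(s) + O(h)$ on the compact support of $K^2$, and collect the leading term $c(s)\eta_4(s)$ with an $O(h)$ remainder. The main obstacle is really just bookkeeping in the first step: the Riemann-sum error must be controlled \emph{uniformly} in $s$ even when $s$ lies near the boundary of $[0,1]$, so the support of $f_n$ only partially overlaps $[0,1]$. This is handled automatically by the effective-support argument above, since the bound only counts those subintervals on which $f_n$ is nonzero, giving the uniform $O(1/(nh))$ remainder.
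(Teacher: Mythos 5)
Your proposal is correct and follows essentially the same route as the paper's proof: a Riemann-sum approximation with error $O(1/(nh))$, the substitution $u=s-hx$ producing the $s$-dependent integration range that defines the $\eta_j$, and a second-order Taylor expansion whose remainder is $o(h^2)$ by uniform continuity of $c''$ (the paper phrases this via the Lagrange form and dominated convergence, but the content is the same). Your explicit Lipschitz/effective-support bookkeeping for the Riemann-sum error is simply a more detailed justification of a step the paper states without proof.
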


\begin{proof}
A Riemann sum approximation implies
\begin{align*}
& \frac{1}{nh} \sum_{i=1}^{n} K \Big(\frac{s-i/n}{h}\Big) c(i/n) = \int_{\frac{s-1}{h}}^{s/h} K(x) c(s-hx) \ \mathrm{d}x + O \Big(\frac{1}{nh}\Big). 
\end{align*}
Next, by Taylor's theorem, there exists some $\tau_x \in [0,1]$ such that
\begin{multline*}
\int_{\frac{s-1}{h}}^{s/h} K(x) c(s-hx) \ \mathrm{d}x   \\
=	 
c(s) \int_{\frac{s-1}{h}}^{s/h} K(x) \ \mathrm{d}x - hc'(s) \int_{\frac{s-1}{h}}^{s/h} K(x) x \ \mathrm{d}x + \frac{h^2}{2} c''(s) \int_{\frac{s-1}{h}}^{s/h} K(x) x^2 \ \mathrm{d}x \\
 + \frac{h^2}{2} \int_{\frac{s-1}{h}}^{s/h} K(x) x^2 \big\{ c''(s-xh\tau_x) - c''(s) \big\} \ \mathrm{d}x.
\end{multline*}
Since $K$ has compact support and $c''$ is continuous, the dominated convergence theorem implies that the last integral is of the order 	$o(h^2)$. 
If $s<1-h$, we have $\frac{s-1}{h} < -1$, and for $s>h$, we have $s/h>1$.  This allows to rewrite the boundaries of the integral accordingly in view of the fact that $K$ has support $[-1,1]$.
	
For the second assertion, write
\begin{align*}
&\phantom{{}={}} \frac{1}{nh} \sum_{i=1}^{n} K^2\Big(\frac{s-i/n}{h}\Big) c(i/n) \\
&= \int_{\frac{s-1}{h}}^{s/h} K^2(x) c(s-hx) \ \mathrm{d}x + O \Big(\frac{1}{nh}\Big) \\
&= c(s) \int_{\frac{s-1}{h}}^{s/h} K^2(x) \ \mathrm{d}x + \int_{\frac{s-1}{h}}^{s/h} K^2(x) \big\{ c(s-hx) -c(s) \big\} \ \mathrm{d}x + O \Big(\frac{1}{nh}\Big) \\
&= c(s) \int_{\frac{s-1}{h}}^{s/h} K^2(x) \ \mathrm{d}x + O(h) + O\Big(\frac{1}{nh}\Big),
	\end{align*}
where the last step is again due to the dominated convergence theorem.
\end{proof}

\begin{proof}[Proof of Theorem \ref{thm:integr_scedasis}]
As at the beginning of the proof of Theorem \ref{thm:scedasis}, let  $y_n= nk^{-1}\big\{1-F(X_{n,n-k})\big\}$. The definition of the STEP $\mathbb{F}_n$ in \eqref{eq:defstep} allows to write 
\[ 
\sqrt{k}\{ \hat{C}_n(s)-C(s)\} = \mathbb{F}_n(s,y_n) + C(s) \sqrt{k}(y_n-1)
\] 
By the proof of Theorem \ref{thm:scedasis}, see \eqref{eq:ynconv}, we know that 
\[
\sqrt{k}(y_n-1) = -\mathbb{F}_n(1,1) + \op. 
\]
Suppose we have shown that 
\begin{align} \label{eq:fneq}
\sup_{s \in [0,1]} |\mathbb{F}_n(s,y_n) - \mathbb{F}_n(s,1)| = \op.
\end{align}
Then, by the previous three displays, uniformly in $s$,
\begin{align} \label{eq:cndec}
\sqrt{k}\{ \hat{C}_n(s)-C(s) \} =\mathbb{F}_n(s,1) - C(s)\mathbb{F}_n(1,1) + \op, 
\end{align}
which implies the assertion since $\{ \mathbb{F}_n(s,1) \}_{s \in [0,1]} \wto \{\mathbb{S}(s,1)\}_{s\in [0,1]} $ in $(\ell^{\infty}([0,1]), \|\cdot\|_{\infty})$ by Proposition \ref{prop:step}. It remains to prove \eqref{eq:fneq}.
	Note that 
	\begin{align*}
	\sup_{s\in [0,1]} |\mathbb{F}_n(s,y_n) - \mathbb{F}_n(s,1)| \leq \sup_{(s,y),(s,z) \in [0,1]^2: |y-z|<|y_n-1|} |\mathbb{F}_n(s,y) - \mathbb{F}_n(s,z)|,
	\end{align*}
For any $\varepsilon > 0$ and $\mu \in (0,1/2)$, we have $\Prob(k^{\mu}|y_n-1| < \varepsilon) \to 1$. Thus, on this event the above supremum can be bounded by 
\begin{align}
\sup_{(s,y),(s,z) \in [0,1]^2: |y-z|< \delta_n} |\mathbb{F}_n(s,y) - \mathbb{F}_n(s,z)|. \nonumber 
\end{align}
where $\delta_n := \varepsilon k^{-\mu} \downarrow 0$. Analogously to showing (\ref{w_delta_Sn}) in the proof of Proposition~\ref{prop:step}, we obtain that the last expression is asymptotically negligible (note that the same semimetric used in the proof of Proposition~\ref{prop:step} can be applied here by Theorem 7.19 in \cite{Kos08} and Proposition \ref{prop:step} and the proof of tightness in the proof of Proposition~\ref{prop:sstep}, which made Theorem 11.16 in \cite{Kos08} applicable for $\mathbb S_n$).
\end{proof}

\section{Proofs for Section \ref{sec:testing}}

For $b \in \N$  and $(s,x)\in [0,1]^2$ let
\begin{align}
\label{eq:sbnxi}
 \mathbb{S}^{(b)}_{n,\xi}(s,x) &= \frac{1}{\sqrt{k}} \sum_{j=1}^{m} \xi^{(b)}_j \sum_{i \in I_j} \I \Big( U_i^{(n)} > 1-\frac{kx}{n} c\big(\tfrac{i}{n}\big) \Big) \I(\tfrac{i}{n} \leq s), \\
 \nonumber
 \mathbb{F}^{(b)}_{n,\xi}(s,x) &= \frac{1}{\sqrt{k}} \sum_{j=1}^{m} \xi^{(b)}_j \sum_{i \in I_j} \I \Big( X_i^{(n)} > V \Big(\frac{n}{kx} \Big) \Big) \I(\tfrac{i}{n} \leq s)
\end{align}
denote \textit{bootstrap-versions} of the (simple) STEP defined in \eqref{eq:defsstep} and \eqref{eq:defstep}.

\begin{proposition} \label{prop:sstep_boot}
 Suppose that Conditions \ref{cond:basic}-\ref{cond:2nd order} hold for $L=1$. Then, for any $B\in\N$ and as $n \to \infty$,
 \[ \big( \Sb_n, \Sb_{n,\xi}^{(1)}, \ldots, \Sb_{n,\xi}^{(B)} \big) \wto \big( \Sb, \Sb^{(1)}, \ldots, \Sb^{(B)} \big) \quad \textrm{in} \quad \big( \ell^{\infty}([0,1]^2),\|\cdot\|_{\infty} \big)^{B+1}, \]
 where $\Sb^{(1)},\ldots,\Sb^{(B)}$ are independent copies of $\Sb$ from Proposition \ref{prop:sstep}.
\end{proposition}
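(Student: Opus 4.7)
The plan is to extend the argument underlying Proposition \ref{prop:sstep} by treating the $(B+1)$-tuple $(\Sb_n, \Sb_{n,\xi}^{\scs (1)}, \ldots, \Sb_{n,\xi}^{\scs (B)})$ as a single vector-valued empirical process. The decisive new inputs are that the multiplier sequences $(\xi_1^{\scs (b)}, \ldots, \xi_m^{\scs (b)})$, $b=1,\ldots,B$, are mutually independent and independent of the data, with $\Exp[\xi_j^{\scs (b)}] = 0$ and $\Exp[(\xi_j^{\scs (b)})^2]=1$, which forces the bootstrap replicates to decouple in the limit both from each other and from $\Sb_n$.

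As in the proof of Proposition \ref{prop:sstep}, I would first pass to the truncated variables $X'_{n,i}$ and the block variables $Y_{n,j} = (X'_{n,i})_{i\in I_j}$. Up to a uniform $\op$ remainder of the same nature as in \eqref{Zn_tilde}, one obtains the representations
\[
\Sb_n(s,x) = \frac{c_\infty^{1/2}}{\sqrt{nv_n}} \sum_{j=1}^m \Big\{ f_{j,n,s,x}(Y_{n,j}) - \Exp[f_{j,n,s,x}(Y_{n,j})] \Big\}+ \op
\]
and
\[
\Sb_{n,\xi}^{(b)}(s,x) = \frac{c_\infty^{1/2}}{\sqrt{nv_n}} \sum_{j=1}^m \xi_j^{(b)} f_{j,n,s,x}(Y_{n,j}) + \op,
\]
with $f_{j,n,s,x}$ as in \eqref{f_jnsx}; the bootstrap summands need no additional centering because $\Exp[\xi_j^{\scs (b)}]=0$ and the multipliers are independent of the data. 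The big-block-small-block reduction from the proof of Proposition \ref{prop:sstep} applies verbatim to the joint process, because the total variation bound $d_{\mathrm{TV}}(P^{(Y_{n,j})_j}, P^{(Y^\ast_{n,j})_j}) \le m\beta(\ell_n) = o(1)$ is not affected by multiplying block functionals by external multipliers. After this coupling, the block contributions to all $B+1$ processes are jointly independent across~$j$.

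For the finite-dimensional distributions I would invoke the Cramér-Wold device together with the Lindeberg CLT for row-independent centered random vectors. The Lindeberg condition reduces to the pointwise bound $(1+M)r = o(\sqrt{nv_n})$ from Condition \ref{cond:mixing}, exactly as in the proof of Proposition \ref{prop:sstep}. The covariance computation splits into four cases: within-$\Sb_n$ and within-$\Sb_{n,\xi}^{\scs (b)}$ covariances both converge to $\mathfrak c((s,x),(s',x'))$ since $\Exp[(\xi_j^{\scs (b)})^2]=1$, reducing to the calculation behind \eqref{eq:cov1}; cross-covariances between $\Sb_n$ and $\Sb_{n,\xi}^{\scs (b)}$ vanish asymptotically because $\Exp[\xi_j^{\scs (b)}]=0$ and $\xi_j^{\scs (b)}$ is independent of $Y_{n,j}$; and cross-covariances between $\Sb_{n,\xi}^{\scs (b)}$ and $\Sb_{n,\xi}^{\scs (b')}$ for $b \ne b'$ vanish similarly by mutual independence of the multiplier sequences. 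This matches the covariance structure of $(\Sb, \Sb^{\scs (1)}, \ldots, \Sb^{\scs (B)})$ with $\Sb^{\scs (b)}$ independent copies of $\Sb$.

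Asymptotic tightness can be verified coordinatewise, and tightness of $\Sb_n$ is already covered by Proposition \ref{prop:sstep}. For each bootstrap coordinate $\Sb_{n,\xi}^{\scs (b)}$ I would repeat the Kosorok manageability argument (Theorem 11.16 in \cite{Kos08}) with cluster functionals $\xi_j^{\scs (b)} f_{n,j}(\cdot;s,x)$ and envelopes $M F_{n,j}$: multiplication by a bounded, data-independent random scalar preserves the almost measurable Suslin property and the manageability of the envelope-dominated class, and the intrinsic semimetric $\rho$ derived in the proof of Proposition \ref{prop:sstep} remains valid up to a multiplicative constant in view of $|\xi_j^{\scs (b)}|\le M$ and $\Exp[(\xi_j^{\scs (b)})^2]=1$, so the uniform bound \eqref{eq:rho} carries over. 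Joint tightness in the product space then follows from coordinatewise tightness. The main obstacle I anticipate is the careful bookkeeping of the mixed centering -- explicit in $\Sb_n$ and implicit through mean-zero multipliers in the bootstrap processes -- when verifying that the cross-covariance terms vanish; once this is set up cleanly, the machinery developed for Proposition \ref{prop:sstep} transfers essentially without modification.
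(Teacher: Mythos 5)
Your proposal is correct and follows the same overall architecture as the paper's proof: truncation to the $X'_{n,i}$, blocking, Bernstein decoupling via $d_{\rm TV}\le m\beta(\ell_n)=o(1)$, Cram\'er--Wold plus Lindeberg for the fidis, Kosorok's Theorem 11.16 for tightness, and coordinatewise tightness for the joint vector. The one genuine difference is how you handle the centering. The paper writes $\Sb_{n,\xi}^{(b)} = c_\infty^{1/2}\tilde\Z_{n,\xi} + R_{n,\xi}$, where $\tilde\Z_{n,\xi}$ carries the multipliers against the \emph{centered} block functionals and $R_{n,\xi}(s,x)=\tfrac{x\sqrt k}{n}\sum_{j}\xi_j\sum_{i\in I_j}\I(\tfrac in\le s)c(\tfrac in)$ collects the multiplier-weighted block means; proving $\sup_{s,x}|R_{n,\xi}|=\op$ then requires a dedicated chaining argument (Orlicz norms and packing numbers via Lemma A.1 of Klein et al.) that takes up a substantial part of the paper's proof. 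You instead keep the block functionals uncentered and exploit that $\Exp[\xi_j^{(b)}f_{j,n,s,x}(Y_{n,j})]=0$, so the summands entering the Lindeberg CLT and Theorem 11.16 are already mean-zero: $R_{n,\xi}$ never has to be isolated, the covariance differs from $\mathfrak c_n$ in \eqref{cn_cov} only by the negligible product-of-means term $O(rv_n)$, and the semimetric $\rho_n$ is unchanged because $\Exp[(\xi_j^{(b)})^2]=1$. This is a legitimate and somewhat more economical route; what the paper's decomposition buys is that tightness of the multiplier process is read off from exactly the same (centered) functionals already treated for $\Sb_n$. The remaining ingredients of your outline --- vanishing cross-covariances from $\Exp[\xi_j^{(b)}]=0$ and mutual independence, preservation of AMS and manageability under bounded data-independent multipliers, and the deterministic $O(r/\sqrt{nv_n})$ cost of replacing $\I(\tfrac in\le s)$ by $\I(\tfrac jm\le s)$ --- all match the paper.
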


\begin{proposition} \label{prop:step_boot}
 Suppose that Conditions \ref{cond:basic}-\ref{cond:2nd order} hold  for $L=1$.  Then, for any $b\in\N$ and as $n \to \infty$,
 \[ 
  \sup_{(s,x) \in [0,1]^2} |\mathbb{F}^{(b)}_{n,\xi}(s,x) - \mathbb{S}^{(b)}_{n,\xi}(s,x)| = \op. 
  \]
As a consequence, by Proposition~\ref{prop:step} and \ref{prop:sstep_boot}, for any $B\in\N$ and as $n \to \infty$,
 \[ \big( \Fb_n, \Fb_{n,\xi}^{(1)}, \ldots, \Fb_{n,\xi}^{(B)} \big) \wto \big( \Sb, \Sb^{(1)}, \ldots, \Sb^{(B)} \big) \quad \textrm{in} \quad \big( \ell^{\infty}([0,1]^2),\|\cdot\|_{\infty} \big)^{B+1}, \]
 where $\Sb^{(1)},\ldots,\Sb^{(B)}$ are independent copies of $\Sb$ from Proposition \ref{prop:sstep}.
\end{proposition}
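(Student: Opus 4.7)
The plan is to mirror the proof of Proposition~\ref{prop:step}, treating the bootstrap multipliers via their uniform bound $|\xi_j^{(b)}| \le M$. By Condition~\ref{cond:2nd order} with $L=1$, there exist $\tau>0$ and $\delta_n := (\tau/c_{\min}) A(n/k) \to 0$ such that, for all sufficiently large $n$, every $i\in\{1,\ldots,n\}$ and every $x\in[0,1]$, the almost-sure sandwich~\eqref{eq:uisubset} holds. Writing $A^{\pm}_{n,i}(x) := \{U_i^{(n)} > 1 - \tfrac{kx}{n} c(\tfrac{i}{n})(1\pm\delta_n)\}$, one observes that both the ``empirical'' event $\{X_i^{(n)} > V(\tfrac{n}{kx})\}$ and the ``nominal'' event $\{U_i^{(n)} > 1 - \tfrac{kx}{n} c(\tfrac{i}{n})\}$ lie between $A^-_{n,i}(x)$ and $A^+_{n,i}(x)$, so the absolute difference of their indicators is bounded by $\I(A^+_{n,i}(x)) - \I(A^-_{n,i}(x))$.

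Using $|\xi_j^{(b)}| \le M$ and $\I(i/n\le s)\le 1$, this would yield, uniformly over $(s,x)\in[0,1]^2$,
\[ |\mathbb{F}^{(b)}_{n,\xi}(s,x) - \mathbb{S}^{(b)}_{n,\xi}(s,x)| \le \frac{M}{\sqrt{k}} \sum_{i=1}^n \bigl[\I(A^+_{n,i}(x)) - \I(A^-_{n,i}(x))\bigr] . \]
By the definition of $\mathbb{S}_n$ in~\eqref{eq:defsstep} (with $C(1)=1$), the right-hand side equals $M\bigl[\mathbb{S}_n(1, x(1+\delta_n)) - \mathbb{S}_n(1, x(1-\delta_n))\bigr] + 2M\sqrt{k}\,x\,\delta_n$, whose supremum over $x\in[0,1]$ is at most $2M\,w_{2\delta_n}(\mathbb{S}_n) + 2M\sqrt{k}\delta_n$, with $w_{\cdot}(\mathbb{S}_n)$ the modulus from~\eqref{def:w_delta}. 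The second summand is $o(1)$ by the assumption $\sqrt{k}A(n/k) \to 0$, while $w_{2\delta_n}(\mathbb{S}_n) = \op$ follows from the tightness of $\mathbb{S}_n$ in $\ell^\infty([0,1]^2)$ established in Proposition~\ref{prop:sstep}, exactly as in the argument leading to~\eqref{w_delta_Sn} in the proof of Proposition~\ref{prop:step}. This would prove the first claim.

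The ``as a consequence'' statement then follows from Slutsky's lemma: combine the joint weak convergence $(\mathbb{S}_n, \mathbb{S}^{(1)}_{n,\xi}, \ldots, \mathbb{S}^{(B)}_{n,\xi}) \wto (\mathbb{S}, \mathbb{S}^{(1)}, \ldots, \mathbb{S}^{(B)})$ from Proposition~\ref{prop:sstep_boot} with the uniform approximations $\sup|\mathbb{F}_n - \mathbb{S}_n|=\op$ (Proposition~\ref{prop:step}) and the newly established $\sup|\mathbb{F}^{(b)}_{n,\xi} - \mathbb{S}^{(b)}_{n,\xi}|=\op$ for each $b=1,\ldots,B$.

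The main obstacle I anticipate is the mild boundary issue that $x(1+\delta_n)$ can slightly exceed $1$, so $\mathbb{S}_n(1, x(1+\delta_n))$ lies outside the index set of Proposition~\ref{prop:sstep} with $L=1$. This is the same subtlety encountered in the proof of Proposition~\ref{prop:step} and would be handled identically: $\mathbb{S}_n$ is well-defined by its explicit formula for any nonnegative second argument, and the tightness-based modulus of continuity argument extends to a slightly enlarged range of $x$ (equivalently, one may apply Proposition~\ref{prop:sstep} with $L$ replaced by any fixed $L'>1$) without requiring strengthened versions of Conditions~\ref{cond:expnum}--\ref{cond:hbound} beyond what is already in force.
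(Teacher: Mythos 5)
Your proposal is correct and follows essentially the same route as the paper: the sandwich \eqref{eq:uisubset} from Condition \ref{cond:2nd order}, the uniform bound $|\xi_j^{(b)}|\le M$ to dominate the indicator differences, the rewriting of the bound via $\mathbb{S}_n(\cdot,x(1\pm\delta_n))$, and the modulus-of-continuity argument $w_{2\delta_n}(\mathbb{S}_n)=\op$ together with $\sqrt{k}\delta_n=o(1)$ exactly as in the proof of Proposition~\ref{prop:step}. The only cosmetic difference is that you bound $\I(i/n\le s)\le 1$ and work at $s=1$, whereas the paper retains $s$ and the factor $C(s)$; both give the required uniform bound, and your remark about $x(1+\delta_n)$ possibly exceeding $1$ addresses a subtlety the paper handles in the same implicit way.
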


\begin{proof}[Proof of Theorem \ref{thm:boot_jointconv}]
Define
\begin{align*} 
\mathbb{\tilde C}^{(b)}_{n,\xi} (s) &= \mathbb{\tilde D}^{(b)}_{n,\xi}(s) - \hat C_n(s) \mathbb{\tilde D}^{(b)}_{n,\xi}(1),
\end{align*} 
where
\begin{align*} 
\mathbb{\tilde D}^{(b)}_{n,\xi}(s) &= \frac{1}{\sqrt{k}} \sum_{j=1}^{m} \xi_j^{(b)} \sum_{i \in I_j} \I \Big( X_i^{(n)} > X_{n,n-k} \Big) \I(\tfrac{i}{n} \leq s), \quad s \in [0,1].
\end{align*}
Recall that $y_n = nk^{-1}\big( 1-F(X_{n,n-k}) \big)$ converges to 1 in probability by \eqref{eq:ynconv}. For $b\in\N$ and $s\in[0,1]$, we have 
\begin{align*}
\mathbb{\tilde C}_{n,\xi}^{(b)}(s) 
&= \Fb_{n,\xi}^{(b)}(s,y_n) - \hat C_n(s) \Fb_{n,\xi}^{(b)}(1,y_n)  \\
&= \Fb_{n,\xi}^{(b)}(s,1) - C(s) \Fb_{n,\xi}^{(b)}(1,1) \\
&= \Sb_{n,\xi}^{(b)}(s,1) - C(s) \Sb_{n,\xi}^{(b)}(1,1) 
\end{align*}
where the third equality is a consequence of Proposition~\ref{prop:step_boot} and
where the second equality is a consequence of $\sup_{s\in[0,1]}|\hat C_n(s) - C(s)| = \op$ by Theorem \ref{thm:integr_scedasis} and
\[ 
\sup_{s \in [0,1]} | \Fb_{n,\xi}^{(b)}(s,y_n)-\Fb_{n,\xi}^{(b)}(s,1)| = \op,
\]
which can be seen to hold by the same argumentation as in the proof of Theorem \ref{thm:integr_scedasis} for showing (\ref{eq:fneq}), since $\mathbb{F}_{n,\xi}^{(b)} \wto \mathbb{S}$ in $\big( \ell^{\infty}([0,1]^2),\|\cdot\|_{\infty} \big)$ by Proposition \ref{prop:step_boot}.
Hence, \eqref{eq:cndec} and Proposition~\ref{prop:step} imply the representation
 \begin{multline*}
\Big( \sqrt{k} \big( \hat C_n-C \big), \mathbb{\tilde C}^{(1)}_{n,\xi}, \ldots, \mathbb{\tilde C}^{(B)}_{n,\xi} \Big) 
=
 \Big( \mathbb{S}_n(s,1) - C(s)\mathbb{S}_n(1,1) , \\
 \Sb_{n,\xi}^{(1)}(s,1) - C(s) \Sb_{n,\xi}^{(1)}(1,1), \ldots,\Sb_{n,\xi}^{(B)}(s,1) - C(s) \Sb_{n,\xi}^{(B)}(1,1) \Big) + \op.
 \end{multline*}
By Proposition~\ref{prop:sstep_boot} and the continuous mapping theorem, the previous expression weakly converges to
\begin{multline*}
 \Big( \mathbb{S}(s,1) - C(s)\mathbb{S}(1,1) , \Sb^{(1)}(s,1) - C(s) \Sb^{(1)}(1,1), \ldots,\Sb^{(B)}(s,1) - C(s) \Sb^{(B)}(1,1) \Big) \\
 = \Big( \mathbb{C}, \mathbb{C}^{(1)}, \ldots, \mathbb{C}^{(B)} \Big)
 \end{multline*}
in $ (\ell^{\infty}([0,1]), \|\cdot\|_{\infty})^{B+1}$. Finally, since 
\[
  \mathbb{D}^{(b)}_{n,\xi}(s) = \mathbb{\tilde D}^{(b)}_{n,\xi}(s) - \sqrt{k}\bar{\xi}^{(b)}\hat C_n(s)
  \]
  and $\hat C_n(1)=1$, we have $\mathbb{C}_{n,\xi}^{(b)} = \mathbb{\tilde C}_{n,\xi}^{(b)}$, which proves the theorem.
  \end{proof}

\begin{proof}[Proof of Corollary \ref{cor:test_boot}]
 By Theorem \ref{thm:boot_jointconv} and the Continuous Mapping Theorem, we have that, under $H_0$, as $n \to \infty$, 
 \begin{align*}
  \big( S_{n,1}, S_{n,1}^{(1)},\ldots,S_{n,1}^{(B)} \big) & \wto \big( \|\mathbb C\|_{\infty}, \|\mathbb C^{(1)}\|_{\infty}, \ldots, \|\mathbb C^{(B)}\|_{\infty} \big), \\
  \big( T_{n,1}, T_{n,1}^{(1)},\ldots,T_{n,1}^{(B)} \big) & \wto \Big( \int_0^1 \mathbb C(s)^2 \ \mathrm{d}s, \int_0^1 \mathbb C^{(1)}(s)^2 \ \mathrm{d}s, \ldots,\int_0^1 \mathbb C^{(B)}(s)^2 \ \mathrm{d}s \Big).
 \end{align*}
 Note that $\mathbb C = \sigma \mathbb B$ in distribution by (\ref{brownianbridge}), where $\mathbb B$ is a Brownian Bridge on $[0,1]$, which implies that $\|\mathbb C\|_{\infty}$ and $\int_0^1 \mathbb C(s)^2 \ \mathrm{d}s$ are continuous random variables. 
  Further note that $(\xi_1^{\scs (b)},\ldots,\xi_m^{\scs (b)})_{b=1,\ldots,B}$ are i.i.d. Overall, Lemma 4.2 in \cite{BucKoj19} is applicable, which proves the assertion under $H_0$.
 For the assertion under $H_1$ let us consider $\varphi_{n,B,S}$; $\varphi_{n,B,T}$ can be treated analogously. Note that under $H_1$, \[k^{-1/2} S_{n,1} = \sup_{ s\in [0,1]} |\hat C_n(s)-s| \pto \sup_{s \in [0,1]} |C(s)-s| > 0\] and $r^{-1/2} S_{n,1}^{(b)} = r^{-1/2} \|\mathbb C_{n,\xi}^{(b)}\|_{\infty} = \op$, such that $S_{n,1}^{(b)} = O_{\Prob}(r^{1/2})$ for any $b \in \{1,\ldots,B\}$. The claim follows since $r = o(k)$ by Condition \ref{cond:mixing}.
\end{proof}

\begin{proof}[Proof of Corollary \ref{cor:test_norm}]
 The proof of the statement regarding the null hypothesis is an immediate consequence of Theorem \ref{thm:boot_jointconv}. Under $H_1$, one can easily show that $S_{n,2}, T_{n,2}$ converge to $\infty$ in probability, which implies the respective assertion under $H_1$.
\end{proof}


\begin{proof}[Proof of Proposition \ref{prop:sstep_boot}]
Fix $b \in \{1,\ldots,B\}$. We only show weak convergence of $(\Sb_n, \Sb_{n,\xi}^{(b)})$; the joint weak convergence of all $B+1$ components can be shown analogously. In the following, we omit the upper index $(b)$ at all instances.
Recall $c_\infty(L)=1+ L\|c\|_\infty$ and $X_{n,i}'$ from \eqref{eq:xnprime}
and $v_n = \Pr(X'_{n,i} \ne 0) = \tfrac{k}{n} c_\infty(L)$. For $(s,x)\in [0,1]^2$, write 
\begin{align}
\mathbb{S}_{n,\xi}(s,x) = c_{\infty}(L)^{1/2} \tilde \Z_{n,\xi}(s,x) + R_{n,\xi}(s,x), \label{S_nxi}
\end{align}
where
\begin{align*}
\tilde \Z_{n,\xi}(s,x)
&=
\frac{1}{\sqrt{nv_n}} \sum_{j=1}^{m} \xi_j \sum_{i \in I_j} \I(\tfrac{i}{n} \leq s) \Big\{ \bm 1( X'_{n,i} > c_\infty(L)- c(\tfrac{i}{n}) x) - \Pr(X'_{n,i} > c_\infty(L)- c(\tfrac{i}{n}) x) \Big\}, \\
R_{n,\xi}(s,x) &= \frac{x\sqrt{k}}{n} \sum_{j=1}^{m} \xi_j \sum_{i \in I_j} \I(\tfrac{i}{n} \leq s) c( \tfrac{i}{n}).
\end{align*}
First, we show that $R_{n,\xi}=\op$. Due to Condition \ref{cond:mixing} one can easily show that it suffices to prove $\tilde R_{n,\xi} = \op$, where 
 \[ 
 \tilde R_{n,\xi}(s) = \sum_{j=1}^{m} f_{n,j}(s), \quad  f_{n,j}(s) = \frac{\sqrt{k}}{n} \xi_j \I(\tfrac{j}{m} \leq s) \sum_{i \in I_j} c(\tfrac{i}{n}), \quad s \in [0,1].
 \]
 First, for $s\in [0,1]$, we have $\Exp[\tilde R_{n,\xi}(s)] = 0$ and 
\[ 
\Var\big(  \tilde R_{n,\xi}(s) \big) 
= 
\frac{k}{n^2} \sum_{j=1}^{m} \Big\{ \I(\tfrac{j}{m} \leq s) \sum_{i\in I_j} c(\tfrac{i}{n}) \Big\}^2 
\leq 
\| c \|_{\infty}^2 \frac{kr^2m}{n^2} =  \| c \|_{\infty}^2 \frac{kr}{n} = o(1) 
\]
by Condition \ref{cond:mixing}, such that $\tilde R_{n,\xi}(s) = \op$ for any fixed $s\in[0,1]$. It remains to show tightness of $\tilde R_{n,\xi}$. 
To this, we will apply Lemma A.1 from \cite{KleVolDetHal16} with $\psi(x)=x^2, \ \bar \eta =2/m, \ T=[0,1]$ and $d(s,t)=|s-t|$. Note that the Orlicz-norm with $\psi(x)=x^2$ coincides with the $L_2$-norm $\|\cdot\|_2$. 
 First, for all $|s-t| \ge \bar \eta/2=1/m$, we have
 \[ \|\tilde R_{n,\xi}(s)-\tilde R_{n,\xi}(t)\|_2  \le 2\|c\|_\infty \frac{kr}n |s-t| \le |s-t| \]
 for sufficiently large $n$ by Condition \ref{cond:mixing}.
 By Lemma A.1 in \cite{KleVolDetHal16}, for any $\delta>0, \eta \geq \bar\eta$, there exists a random variable $S'$ and a constant $K'>\infty$, such that 
\begin{align}
 \Prob \Big( \sup_{d(s,t) < \delta} |\tilde R_{n,\xi}(s)-\tilde R_{n,\xi}(t)| > 2 \varepsilon \Big) \leq \Prob(|S'|> \varepsilon) + \Prob \Big( \sup_{d(s,t) \leq \bar \eta} |\tilde R_{n,\xi}(s)-\tilde R_{n,\xi}(t)| > \varepsilon/2 \Big), \label{bound}
\end{align}
for all $\varepsilon > 0$, where 
\begin{align*}
 \Prob (|S'|>\varepsilon) \leq \Big(\frac{8K'}{\varepsilon}\Big)^2 \Big( \int_{1/m}^{\eta} D(x,d)^{1/2} \mathrm{d}x + (\delta+4/m) D(\eta,d) \Big)^2.
\end{align*}
Here, $D(\cdot,d)$ denotes the \textit{packing number} on $([0,1],d)$ and satisfies $D(x,d) \leq 4x^{-1}+1$, $x >0$, see \cite{VanWel96}, page 98. Thus,
\begin{align} 
\lim_{\delta \downarrow 0} \limsup_{n \to \infty} \Prob(|S'|>\varepsilon) \leq \Big(\frac{8K'}{\varepsilon}\Big)^2 \Big( \int_{0}^{\eta} (4x^{-1}+1)^{1/2} \mathrm{d}x \Big)^2. \label{S}
\end{align}
Further, we have
\begin{align*}
 |\tilde R_{n,\xi}(s) - \tilde R_{n,\xi}(t)| \leq M\|c\|_\infty \frac{\sqrt{k}}{m} \sum_{j=1}^m |\I(j/m \leq s)-\I(j/m \leq t)|,
\end{align*}
where $|\I(j/m \leq s)-\I(j/m \leq t)| = \I(s \wedge t < j/m \leq s \vee t)$ does not equal zero for at most two different $j \in \{1,\ldots,m\}$, if $d(s,t)=|s-t| \leq \bar \eta =2/m$. Consequently,
\[  \Prob \Big( \sup_{d(s,t) \leq \bar \eta} |\tilde R_{n,\xi}(s)-\tilde R_{n,\xi}(t)| > \varepsilon/2 \Big) \leq \I\Big( 2M\|c\|_\infty \tfrac{\sqrt{k}}{m} > \varepsilon/2 \Big), \] which converges to zero as $n \to \infty$ since $rk=o(n)$ by Condition \ref{cond:mixing}. Altogether, by (\ref{bound}) and (\ref{S}), we have shown, for all $\varepsilon > 0$,
\[ \lim_{\delta \downarrow 0} \limsup_{n \to \infty} \Prob \Big( \sup_{d(s,t) < \delta} |\tilde R_{n,\xi}(s)-\tilde R_{n,\xi}(t)| > 2 \varepsilon \Big) \leq \Big(\frac{8K'}{\varepsilon}\Big)^2 \Big( \int_{0}^{\eta} (4x^{-1}+1)^{1/2} \mathrm{d}x \Big)^2, \]
which can be made arbitrarily small by choosing $\eta$ accordingly. This concludes the proof of $R_{n,\xi}=\op$.

By equation (\ref{S_nxi}) we obtain $\Sb_{n,\xi}= c_{\infty}(L)^{1/2} \tilde \Z_{n,\xi} + \op$. Since $|\xi_j|\leq M$ the same calculation as in (\ref{Zn_tilde}) in the proof of Proposition \ref{prop:sstep} (for treating $\Z_n$ and $\tilde \Z_n$) yields $\sup_{(s,x)\in [0,1]^2} |\tilde \Z_{n,\xi}(s,x) - \Z_{n,\xi}(s,x)| = \op$ with
\begin{align*} 
\Z_{n,\xi}(s,x) &=
\frac{1}{\sqrt{nv_n}} \sum_{j=1}^{m} \xi_j \I(\tfrac{j}{m} \leq s) \sum_{i \in I_j} \Big\{ \bm 1( X'_{n,i} > c_\infty(L)- c(\tfrac{i}{n}) x) \\
& \hspace{6cm} - \Pr(X'_{n,i} > c_\infty(L) - c(\tfrac{i}{n}) x) \Big\} \\
&= \frac{1}{\sqrt{nv_n}} \sum_{j=1}^{m} \xi_j \big\{ f_{j,n,s,x}(Y_{n,j}) - \Exp[f_{j,n,s,x}(Y_{n,j})] \big\},
\end{align*}
where $Y_{n,j} = (X'_{n,i})_{i \in I_j}$, $j=1,\ldots,m$, and $f_{j,n,s,x}$ is defined as in (\ref{f_jnsx}).
Further, by (\ref{Sn_Zn}) we know that $\Sb_n = c_{\infty}(L)^{1/2} \Z_n + \op$, where
\[
 \Z_n(s,x) = \frac{1}{\sqrt{nv_n}} \sum_{j=1}^{m}  \big\{ f_{j,n,s,x}(Y_{n,j}) -  \Exp[ f_{j,n,s,x}(Y_{n,j})] \big\},
\]
as defined after (\ref{Zn_tilde_def}), the only difference to $\Z_{n,\xi}$ being the multipliers $\xi_j$. It remains to show weak convergence of $(\Z_n,\Z_{n,\xi})$. We start with the corresponding weak convergence of the fidis. Since $\xi_1,\ldots,\xi_m$ are independent with $|\xi_j|\leq M$ and independent of $Y_{n,1},\ldots,Y_{n,m}$ the proof is analogous to the one of Proposition \ref{prop:sstep}. Let us just calculate the covariance function for independent blocks $Y_{n,1},\ldots,Y_{n,m}$. Note that $\Exp[\xi_j]=0$ and $\Exp[\xi_j^2]=1$. For $(s,x),(s',x') \in [0,1]^2$, we obtain
\begin{align*}
 & \frac{1}{nv_n} \sum_{j=1}^{m} \Cov \big( f_{j,n,s,x}(Y_{n,j}), \xi_j f_{j,n,s',x'}(Y_{n,j}) \big) = 0
\end{align*}
and 
\begin{align*}
 \frac{1}{nv_n} \sum_{j=1}^{m} \Cov \big( \xi_j f_{j,n,s,x}(Y_{n,j}), \xi_j f_{j,n,s',x'}(Y_{n,j}) \big) 
 &= \frac{1}{nv_n} \sum_{j=1}^{m} \Exp \big[ f_{j,n,s,x}(Y_{n,j}) f_{j,n,s',x'}(Y_{n,j}) \big],
\end{align*}
which equals $\mathfrak c_n((s,x),(s',x'))$ defined in (\ref{cn_cov}) and converges to $\mathfrak c((s,x),(s',x'))$ from Proposition \ref{prop:sstep} by the corresponding proof.

With regard to the asymptotic tightness, note that by Lemma 1.4.3 in \cite{VanWel96} it suffices to show asymptotic tightness of $\Z_n$ and $\Z_{n,\xi}$ separately. Asymptotic tightness of $\Z_n$ has been shown in  the proof of Proposition \ref{prop:sstep}. Concerning the asymptotic tightness of $\Z_{n\xi}$, the proof follows analogously. Here, the conditions (1)-(5) in the proof of Proposition \ref{prop:sstep} can immediately be seen to hold since $|\xi_j|\leq M$, and condition (6) follows since the function $\rho_n$ is the same as before due to $\Exp[\xi_j^2]=1$.
\end{proof}

\begin{proof}[Proof of Proposition \ref{prop:step_boot}]
 Let $s,x \in [0,1]$ and $b \in \{1,\ldots,B\}$. Set $\varepsilon_n(x)=V(n/(kx)) = F^{-1}(1-kx/n)$ such that, almost surely,
	\begin{align*} 				  
	\mathbb{F}^{(b)}_{n,\xi}(s,x) 
	&= \frac{1}{\sqrt{k}}\sum_{j=1}^{m} \xi_j^{(b)} \sum_{i \in I_j} \I \Big\{ U_{i}^{(n)} > 1-\frac{kx}{n} \frac{1-F_{n,i}(\varepsilon_n(x))}{1-F(\varepsilon_n(x))
	}  \Big\} \I(\tfrac{i}{n} \leq s).
	\end{align*}
	Note that $|\xi_j^{(b)}|\leq M$. Then, by relation (\ref{eq:uisubset}) and the definition of $\mathbb{S}^{(b)}_{n,\xi}$ and $\mathbb S_n$ in \eqref{eq:sbnxi} and \eqref{eq:defsstep}, respectively, we obtain
	\begin{align*}
	 &\phantom{{}={}} |\mathbb{F}_{n,\xi}^{(b)}(s,x)-\mathbb{S}^{(b)}_{n,\xi}(s,x)| \\
	 & \leq  \frac{M}{\sqrt{k}} \sum_{j=1}^m \sum_{i \in I_j} \I(\tfrac{i}{n} \leq s) \Big| \I\Big( U_i^{(n)} > 1- c(\tfrac{i}{n}) (1+\delta_n) \frac{kx}{n} \Big) - \I\Big( U_i^{(n)} > 1-c(\tfrac{i}{n}) \frac{kx}{n} \Big) \Big| \\
	 & \hspace{3cm} + \Big| \I\Big( U_i^{(n)} > 1- c(\tfrac{i}{n}) (1-\delta_n) \frac{kx}{n} \Big) - \I\Big( U_i^{(n)} > 1-c(\tfrac{i}{n}) \frac{kx}{n} \Big) \Big| \\
	 &= M \big\{ \mathbb S_n(s,x(1+\delta_n)) + \sqrt{k}C(s)x(1+\delta_n)- (\mathbb S_n(s,x)+\sqrt{k}C(s)x) \\
	 & \hspace{1cm} - (\mathbb S_n(s,x(1-\delta_n)) + \sqrt{k}C(s)x(1-\delta_n)) + \mathbb S_n(s,x)+\sqrt{k}C(s)x \big\} \\  
	 &= M \big\{ \mathbb S_n(s,x(1+\delta_n)) - \mathbb S_n(s,x(1-\delta_n)) + 2 C(s)x\sqrt{k}\delta_n \big\}, 
	\end{align*}
	where $\delta_n$ is defined after (\ref{eq:uisubset}).
	Consequently, 
	\begin{align*}
		\sup_{(s,x)\in [0,1]^2} |\mathbb{F}_{n,\xi}^{(b)}(s,x)-\mathbb{S}^{(b)}_{n,\xi}(s,x)| \leq M w_{2 \delta_n}(\mathbb{S}_n) +  2M \sqrt{k}\delta_n,
	\end{align*}
	where $w_\delta(\mathbb S_n)$ is defined in (\ref{def:w_delta}) in the proof of Proposition \ref{prop:step}.  There, it is further shown that $w_{2\delta_n}(\Sb_n) = \op$ and $\sqrt{k}\delta_n = o(1)$ by Condition \ref{cond:2nd order}, which implies the assertion. 
	\end{proof}

\section{Proofs for Section \ref{sec:EI}} \label{sec:pEI}

\begin{proof}[Proof of Lemma \ref{Zni_exp}]
	For $x > 0$ write 
	\begin{align*}
		\Prob (Z_{n,1+\lfloor \xi k' \rfloor} \geq x)
		&= \Prob \Big( \max_{i \in I'_{1+\lfloor \xi k'\rfloor}} F(X_i^{(n)}) \leq 1-x/\bs \Big) \\
		&= \Prob \Big( X_i^{(n)} \leq F^{-1}(1-x/\bs) \ \textrm{ for all } \ i \in I'_{1+\lfloor \xi k'\rfloor} \Big) \\
		&= \Prob \Big( Z_i \leq \frac{1}{1-F_{n,i}} \big(F^{-1}(1-x/\bs) \big) \ \textrm{ for all } \ i \in I'_{1+\lfloor \xi k'\rfloor} \Big).
	\end{align*}
	By Corollary \ref{cor1} the last term equals 
	\[ \Prob \Big( Z_i \leq \frac{\bs}{c(\xi) x} \ \textrm{ for all } \ i \in I'_{1+\lfloor \xi k'\rfloor} \Big) + o(1) = \Prob \Big( \max_{i \in I'_{1+\lfloor \xi k'\rfloor}} U_i \leq 1- \frac{c(\xi)x}{\bs} \Big) + o(1), \]
	which converges to $\exp(-\theta c(\xi) x)$ by (\ref{extremal_index}).
\end{proof}

\begin{proof}[Proof of Theorem \ref{thm:cons_EI}]
We start with part (a). By Theorem \ref{thm:scedasis} we know that $\tilde c_n(x) = c(x)+\op$ for any $x \in [0,1]$, and the continuous mapping theorem implies that $\hat c_n(x)^{-1} = \max(\tilde c_n(x),\kappa)^{-1} = c(x)^{-1}+\op$ for any $x \in [0,1]$. Since $\hat c_n \geq \kappa$, we obtain 
$\Exp \big[ |\hat c_n(x)^{-1}|^p \big] \leq \kappa^{-p} < \infty$
for any $p > 0$ and $x \in [0,1]$. By Example 2.21 in \cite{Van98}, this implies $\Exp \big[ \hat c_n(x)^{-1} \big] \to c(x)^{-1}$ for any $x \in [0,1]$, such that 
\[ 
\Exp[\hat{\tau}_n] = \int_{0}^{1} \Exp \big[ \hat c_n(x)^{-1} \big] \ \mathrm{d}x \to \int_{0}^{1} c(x)^{-1} \ \mathrm{d}x = \tau 
\] 
by the dominated convergence theorem. Next, we show that $\Var(\hat \tau_n)=o(1)$. Note that $\hat c_n(x)^{-1} \hat c_n(y)^{-1}= c(x)^{-1}c(y)^{-1}+\op$ for any $x,y \in [0,1]$ and 
$
\Exp \big[ | \hat{c}_n(x)^{-1}\hat{c}_n(y)^{-1}|^p \big] \leq \kappa^{-2p} 
$
for any $p>0$ and $x,y \in [0,1]$, such that as above $\Exp \big[ \hat{c}_n(x)^{-1}\hat{c}_n(y)^{-1} \big] = c(x)^{-1}c(y)^{-1}$ + o(1). Thus, by Fubini's theorem
\begin{align*} 
\Var(\hat \tau_n) 
&=  
\int_{0}^{1} \int_{0}^{1} \Exp \big[ \hat{c}_n(x)^{-1}\hat{c}_n(y)^{-1} \big] \ \mathrm{d}x \mathrm{d}y 
	- \Big( \int_{0}^{1} \Exp \big[ \hat{c}_n(x)^{-1} \big] \ \mathrm{d}x \Big)^2 \\
&\to  \int_{0}^{1} \int_{0}^{1} c(x)^{-1}c(y)^{-1} \ \mathrm{d}x \mathrm{d}y - \Big( \int_{0}^{1} c(x)^{-1} \ \mathrm{d}x \Big)^2 = 0.
\end{align*}
The assertion in (a) follows from Markov's inequality.
	
We continue with part (b). Write $\hat T_n = S_{n1}+S_{n2}+S_{n3}$, where 
\[ 
S_{n1} = \frac{1}{k'} \sum_{j=1}^{k'} \hat Z_{n,j}-Z_{n,j}, \quad 
S_{n2} = \frac{1}{k'} \sum_{j=1}^{k'} Z_{n,j}-\Exp[Z_{n,j}], \quad 
S_{n3} = \frac{1}{k'} \sum_{j=1}^{k'} \Exp[Z_{n,j}]. 
\]
First, we show $S_{n3} \to \tau/\theta$. Write $S_{n3}= \int_{0}^{1} \varphi_n(\xi) \ \mathrm{d}\xi$, where 
\[
\varphi_n(\xi) = \Exp[Z_{n,1+\lfloor \xi k' \rfloor}] \to (\theta c(\xi))^{-1}
\] 
by Lemma \ref{Zni_exp} and  uniform integrability, which follows from \ref{cond:uniform integr}. Hence, the dominated convergence theorem implies that $S_{n3} \to \int_{0}^{1} (\theta c(\xi))^{-1} \ \mathrm{d}\xi = \tau/\theta$; note $\sup_{n \in \N} \|\varphi_n\|_{\infty} < \infty$ by Condition \ref{cond:uniform integr}.
	
In the following, we prove $S_{n1}=\op$ and $S_{n2}=\op$, and start with $S_{n2}$. Split $S_{n2}$ into $S_{n2}^{\textrm{even}}$ and $S_{n2}^{\textrm{odd}}$, which are defined as $S_{n2}$ but with $j$ only ranging over the even or odd numbers in $\{1,\ldots,k'\}$, respectively. It suffices to show that $S_{n2}^{\textrm{even}}$ and $S_{n2}^{\textrm{odd}}$ are asymptotically negligible. We only treat $S_{n2}^{\textrm{even}}$; the proof for $S_{n2}^{\textrm{odd}}$ is similar. 

For $n \in \N$, let $(Z_{n,j}^{\ast})_{j=1,\ldots,k'}$ denote an independent sequence with $Z_{n,j}^{\ast}$  being equal in distribution to $Z_{n,j}$ for $j=1,\ldots,k'$. Since the observations making up the even numbered blocks are separated by at least $\bs$ observations, we may follow the argumentation in \cite{Ebe84} to obtain 
\[ 
d_{\rm TV}\Big( P^{(Z_{n,2j})_{1\leq j \leq \lfloor k'/2 \rfloor}}, \ P^{(Z_{n,2j}^{\ast})_{1\leq j \leq \lfloor k'/2 \rfloor}} \Big) \leq \lfloor k'/2 \rfloor \beta(\bs), 
\] where $d_{\rm TV}$ denotes the total variation distance between two probability laws. Since $k'\beta(\bs) = o(1)$ by \ref{cond:blocksize}, the above expression converges to zero as well, and $S_{n2}^{\textrm{even}} = S_{n2}^{\textrm{even},\ast} + \op$, where $S_{n2}^{\textrm{even},\ast}$ is defined as $S_{n2}^{\textrm{even}}$ but in terms of $(Z_{n,j}^{\ast})_j$.  Finally, $\Exp[S_{n2}^{\textrm{even},\ast}]=0$ and 
\[ 
\Var(S_{n2}^{\textrm{even},\ast}) 
= 
\frac{1}{(k')^2} \sum_{j=1, j \text{ even}}^{k'} \Var(Z_{n,j}^{\ast}) 
\le 
\frac{1}{(k')^2} \sum_{j=1}^{k'} \Var(Z_{n,j}^{\ast}) 
= \frac{1}{k'} \int_{0}^{1} g_n(\xi) \ \mathrm{d}\xi, 
\]
where $g_n(\xi) = \Var(Z_{n,1+\lfloor \xi k' \rfloor}) \to (\theta c(\xi))^{-2}$ by Lemma \ref{Zni_exp} and uniform integrability from \ref{cond:uniform integr}, which implies $\Var(S_{n2}^{\textrm{even},\ast}) =o(1)$ and $S_{n2}^{\textrm{even},\ast}=\op$.  
	
It remains to show $S_{n1}=\op$. Note that the STEP $\mathbb{F}_n$ from \eqref{eq:defsstep} with $k=k'$  satisfies
\[ 
\mathbb{F}_n(1,\bs(1-F(x))) = \bs \sqrt{k'} 
\big\{ F(x)-\hat F_n(x) \big\},
\] 
which yields $\hat{Z}_{n,j}-Z_{n,j}= \frac{1}{\sqrt{k'}} \mathbb{F}_n(1,Z_{n,j})$ for $j=1,\ldots,k'$ by the definition of $Z_{n,j}$ and $\hat Z_{n,j}$ in \eqref{eq:znj}. Therefore, 
\[ 
S_{n1} 
= \frac{1}{k'} \sum_{j=1}^{k'} \frac{1}{\sqrt{k'}} \mathbb{F}_n(1,Z_{n,j}) = \int_{0}^{\infty} \mathbb{F}_n(1,x) \ \mathrm{d}H_n(x), 
\]
where $H_n(x)=(k')^{-3/2} \sum_{j=1}^{k'} \I(Z_{n,j}\leq x)$. Note that $\sup_{x \in [0,T]} |H_n(x)| \leq (k')^{-1/2} =o(1)$ for any $T \in \N$. Under the imposed conditions, Proposition \ref{prop:step} is applicable for $k=k'$, yielding $\{\mathbb{F}_n(1,x)\}_{x \in [0,T]} \wto \{\mathbb{S}(1,x)\}_{x \in [0,T]}$ in $(\ell^\infty([0,T]),\|\cdot\|_{\infty})$, such that 
\[ 
S_{n1}(T) 
:= 
\int_{0}^{T} \mathbb{F}_n(1,x) \ \mathrm{d}H_n(x) = \op, \quad T \in \N, 
\] 
by Lemma C.8 in \cite{BerBuc17}. By Theorem 4.2 in \cite{Bil68}, the proof of  $S_{n1}=\op$ is finished once we show that, for any $\delta > 0$,
\[ 
\lim\limits_{T \to \infty} \limsup_{n \to \infty}  P\big( |S_{n1}-S_{n1}(T)| > \delta \big) = 0.
\]
Set $f_n(x,z)= \I(x > V(\bs/z)) - z/\bs$, such that $S_{n1}= \frac1{(k')^2} \sum_{i=1}^{k'} \sum_{j=1}^n  f_n(X_j^{(n)}, Z_{n,i})$.
Write $S_{n1}-S_{n1}(T) = A_{n,T}+B_{n,T}+C_{n,T}$, where 
\begin{align*}
	A_{n,T} &= \frac{1}{(k')^2} \sum_{i=1}^{k'} \sum_{j \in \{i-1,i,i+1\}} \sum_{s \in I'_j} f_n(X_s^{(n)},Z_{n,i}) \I(Z_{n,i}\geq T), \\
	B_{n,T} &= \frac{1}{(k')^2} \sum_{i=1}^{k'-2} \sum_{j=i+2}^{k'} \sum_{s \in I'_j} f_n(X_s^{(n)},Z_{n,i}) \I(Z_{n,i}\geq T), \\
	C_{n,T} &= \frac{1}{(k')^2} \sum_{i=3}^{k'} \sum_{j=1}^{i-2} \sum_{s \in I'_j} f_n(X_s^{(n)},Z_{n,i}) \I(Z_{n,i}\geq T).
\end{align*}
First, $|A_{n,T}| \leq 3\bs/k' = o(1)$ by Condition \ref{cond:blocksize}. It remains to show, for any $\delta > 0$, 
\begin{align*}
	& \lim_{T \to \infty} \limsup_{n \to \infty} P(|B_{n,T}|> \delta) = 0, \quad \lim_{T \to \infty} \limsup_{n \to \infty} P(|C_{n,T}|> \delta) = 0.
\end{align*}
We only consider $C_{n,T}$; $B_{n,T}$ can be treated similarly. Write 
\[ 
C_{n,T} = \frac{1}{k'} \sum_{i=3}^{k'} \varphi_{n,i-2}(Z_{n,i}) \I(Z_{n,i} \geq T) \] with \[ \varphi_{n,i-2}(z) = \frac{1}{k'} \sum_{j=1}^{i-2} \sum_{s \in I'_j} f_n(X_s^{(n)},z). 
\]
For fixed $i \in \{3, \dots, k'\}$, consider the expectation $\Exp\big[ |\varphi_{n,i-2}(Z_{n,i})| \I(Z_{n,i}\geq T) \big]$.
By Berbee's coupling lemma \citep{Ber79}, we may construct a random variable $Z_{n,i}^{\ast}$ independent of $((X_s^{\scs (n)})_{s \in I'_j} )_{j=1,\ldots,i-2}$ and equal in distribution to $Z_{n,i}$ with  
\[ 
\Prob(Z_{n,i}^{\ast} \neq Z_{n,i}) = \beta\big( \sigma(Z_{n,i}), \sigma\big( \big( (X_s^{(n)})_{s \in I'_j} \big)_{j=1,\ldots,i-2} \big) \big) \leq \beta(\bs). 
\]
Hence,
\begin{multline*}
\Exp\big[ |\varphi_{n,i-2}(Z_{n,i})| \I(Z_{n,i}\geq T) \big] 
=
\Exp\big[ |\varphi_{n,i-2}(Z_{n,i}^*)| \I(Z_{n,i}^*\geq T) \big]  \\
+
\Exp\big[ \big\{ |\varphi_{n,i-2}(Z_{n,i})| \I(Z_{n,i}\geq T) - |\varphi_{n,i-2}(Z_{n,i}^{\ast})| \I(Z_{n,i}^{\ast}\geq T) \big\} \I(Z_{n,i}\neq Z_{n,i}^{\ast}) \big].
\end{multline*}
Since $|\varphi_{n,i-2}| \leq \bs$ the second summand can be bounded by $2\bs \Prob(Z_{n,i}\neq Z_{n,i}^{\ast}) \leq 2\bs \beta(\bs) \leq 2k'\beta(\bs)=o(1)$ by Condition \ref{cond:blocksize}, uniformly in $i$. Now, consider the first summand in the above display, for which we first treat $\Exp[|\varphi_{n,i-2}(z)|]$ for $T \leq z \leq \bs$ (note that $Z_{n,i}^{\ast} \leq \bs$ a.s.). We have 
\begin{align*}
	\Exp[|\varphi_{n,i-2}(z)|] & \leq \frac{1}{k'} \sum_{j=1}^{i-2} \sum_{s \in I'_j} \Exp \big[ |f_n(X_s^{(n)},z)| \big]
	\leq z + \frac{1}{k'} \sum_{j=1}^{i-2} \sum_{s \in I'_j} \Prob \big( X_s^{(n)} > V(\bs/z) \big).
\end{align*}
Since $\Prob \big( X_s^{(n)} > V(\bs/z) \big) = 1-F_{n,s}\big( (1-F)^{-1}(z/\bs) \big)$ and by Condition \ref{cond:2nd order}, there exists some $\tau > 0$  such that, for all $s \leq n$ and $n$ large enough, 
\[
\Prob \big( X_s^{(n)} > V(\bs/z) \big) < \frac{z}{\bs} c\Big( \frac{s}{n}\Big) \bigg\{ 1+\tfrac{\tau}{c_{\min}} A \Big(\frac{\bs}{z}\Big) \bigg\}. 
\]
As a consequence, uniformly in $i$,
\[ 
\Exp[|\varphi_{n,i-2}(z)|] \leq z + z \|c\|_{\infty} \bigg\{ 1+\tfrac{\tau}{c_{\min}} A \Big(\frac{\bs}{z}\Big) \bigg\}. 
\]
Since $A$ is eventually decreasing, the last expression can be bounded by \[z\big[ 1+ \|c\|_{\infty} \big\{ 1+\tfrac{\tau}{c_{min}} A (1) \big\} \big]\] for $T \leq z \leq \bs$. After conditioning on $Z_{n,i}^{\ast}$ we thus obtain with the Cauchy-Schwarz-inequality
\begin{align*}
\Exp \big[ |\varphi_{n,i-2}(Z_{n,i}^{\ast})| \I(Z_{n,i}^{\ast}\geq T) \big] 
& \leq  
\Big[ 1+ \|c\|_{\infty} \Big\{ 1+\tfrac{\tau}{c_{min}} A (1) \Big\} \Big]  \Exp \big[ Z_{n,i}^{\ast} \I(Z_{n,i}^{\ast}\geq T) \Big].
\end{align*}
Since $Z_{n,i}^*$ has the same distribution as $Z_{n,i}$ and by the Cauchy Schwartz inequality, we have thus found the bound
\begin{align*} 
 \Exp[|C_{n,T}|] 
 &\le o(1) + \frac1{k'} \sum_{i=1}^{k'} \Big[ 1+ \|c\|_{\infty} \Big\{ 1+\tfrac{\tau}{c_{min}} A (1) \Big\} \Big] \Exp \big[ Z_{n,i} \I(Z_{n,i}\geq T) \Big] \\
&\lesssim  o(1) + \int_{0}^{1} g_n(\xi) \ \mathrm{d}\xi,
\end{align*}
where $g_n(\xi) = \Exp[Z_{n,1+\lfloor \xi k' \rfloor}^2]^{1/2} \Prob(Z_{n,1+\lfloor \xi k' \rfloor} \geq T)^{1/2}$ converges to $\Exp[V_{\xi}^2]^{1/2} \Prob(V_{\xi} \geq T)^{1/2}$ as $n \to \infty$ for $V_{\xi} \sim \textrm{Exp}(\theta c(\xi))$ by Lemma \ref{Zni_exp} and Condition \ref{cond:uniform integr}. Altogether,
\begin{align*} 
& \lim_{T \to \infty} \limsup_{n \to \infty}  
\le 
\lim_{T \to \infty} \int_{0}^{1} \Exp[V_{\xi}^2]^{1/2} P(V_{\xi}\geq T)^{1/2} \ \mathrm{d}\xi = 0,
\end{align*}
which implies (b). 
\end{proof}


\section{Auxiliary Results} \label{sec:auxiliary2}

\begin{lemma} \label{Bn-Cn}
Fix $\xi \in [0,1)$ and $x>0$. Under Conditions \ref{cond:basic}-\ref{cond:sbias}, \ref{cond:2nd order} and \ref{cond:blocksize}, $A_n = B_n + \op$ and $B_n=C_n +\op$ as $n \to \infty$, where  
\begin{align*} 
A_n &= \sum_{i \in I'_{1+\lfloor \xi k'\rfloor}} \I \Big( Z_i > \frac{1}{1-F_{n,i}} \big( F^{-1}(1-x/\bs) \big) \Big), \\
B_n &= \sum_{i \in I'_{1+\lfloor \xi k'\rfloor}} \I \Big( Z_i > \frac{\bs}{c(i/n) x} \Big), \qquad C_n = \sum_{i \in I'_{1+\lfloor \xi k'\rfloor}} \I \Big( Z_i > \frac{\bs}{c(\xi)x} \Big). 
\end{align*}
\end{lemma}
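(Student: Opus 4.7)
The plan is to establish both $A_n = B_n + o_\Prob(1)$ and $B_n = C_n + o_\Prob(1)$ by showing that each difference converges to zero in $L^1$, then invoke Markov's inequality. The crucial observation that makes the analysis clean is that under Condition \ref{cond:reg}, each $Z_i$ has the standard Pareto distribution, so for any $0 < t_1 \le t_2$,
\[
\Exp\bigl|\I(Z_i > t_1) - \I(Z_i > t_2)\bigr| = \Pr(t_1 < Z_i \le t_2) = \tfrac{1}{t_1} - \tfrac{1}{t_2}.
\]
This identity reduces each problem to controlling the sum, over the $q$ indices in the block $I'_{1+\lfloor \xi k'\rfloor}$, of the $L^1$ differences of the reciprocal thresholds.

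For $A_n \approx B_n$: set $y_n := F^{-1}(1-x/\bs)$, so by continuity of $F$ we have $1-F(y_n) = x/\bs$ and $y_n \uparrow x^*$ as $n \to \infty$. Apply the second-order condition \ref{cond:2nd order} at $y_n$ to obtain, uniformly in $i=1,\dots,n$,
\[
\frac{1-F_{n,i}(y_n)}{1-F(y_n)} = c\bigl(\tfrac{i}{n}\bigr) + \eps_{n,i}, \qquad |\eps_{n,i}| \lesssim A(\bs/x) = o(1).
\]
Hence the two thresholds $t_i^A := 1/(1-F_{n,i}(y_n)) = (\bs/x)/(c(i/n)+\eps_{n,i})$ and $t_i^B := \bs/(c(i/n)x)$ satisfy $|1/t_i^A - 1/t_i^B| = (x/\bs)|\eps_{n,i}| = O(A(\bs/x)/\bs)$. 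Summing the probability bound over the $q$ indices in the block gives $\Exp|A_n - B_n| \lesssim x A(\bs/x) = o(1)$.

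For $B_n \approx C_n$: invoke the Hölder condition \ref{cond:sbias}. Every $i \in I'_{1+\lfloor \xi k' \rfloor}$ satisfies $|i/n - \xi| \le \bs/n + 1/k'$, so $|c(i/n) - c(\xi)| \le K_c (\bs/n + 1/k')^{1/2}$. Writing $t^C := \bs/(c(\xi)x)$, we obtain
\[
|1/t_i^B - 1/t^C| = (x/\bs)|c(i/n)-c(\xi)| \le (xK_c/\bs)(\bs/n + 1/k')^{1/2},
\]
and summing over the $q$ indices yields $\Exp|B_n - C_n| \le xK_c(\bs/n + 1/k')^{1/2}$, which tends to $0$ since $\bs = o(\sqrt{n})$ and $k'\to\infty$ by Condition \ref{cond:blocksize}.

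The only obstacles are minor bookkeeping: verifying that $y_n$ is eventually large enough to apply \ref{cond:2nd order}, checking that the constants in the bound on $|\eps_{n,i}|$ are uniform in $i$ and $n$ (both granted by the $\sup_n\max_i$ in the statement of the condition), and using that $c$ is bounded away from zero on $[0,1]$ to ensure the ratios $1/t_i^A$, $1/t_i^B$ remain comparable for large $n$. Both $L^1$-bounds being $o(1)$, Markov's inequality delivers the two claims of the lemma.
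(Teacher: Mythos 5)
Your proof is correct and follows essentially the same route as the paper: an $L^1$ bound via the standard Pareto identity, with Condition \ref{cond:2nd order} controlling $A_n-B_n$ and the H\"older continuity of $c$ plus $\bs=o(\sqrt n)$ controlling $B_n-C_n$. The only cosmetic difference is that the paper handles $A_n\approx B_n$ by sandwiching $A_n$ between $B_n^\pm$ with multiplicative error factors $(1\pm w_n)^{-1}$ before taking $L^1$ differences, whereas you bound $\Exp|A_n-B_n|$ directly with additive errors $\eps_{n,i}$; both reduce to the same computation.
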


\begin{proof}
	For the first part of the lemma, first, note that since $c$ is a positive and continuous function on $[0,1]$, there exist $v,w>0$ such that $v < c(s) < w$ for all $s \in [0,1]$. By Condition \ref{cond:2nd order} there are real numbers $y_0 < x^{\ast}$ and $\tau > 0$ such that for all $y > y_0, \ n \in \N$ and $1 \leq i \leq n$, 
	\[ c(i/n) \Big\{  1-\frac{\tau}{v} A \Big( \frac{1}{1-F(y)} \Big) \Big\} < \frac{1-F_{ni}(y)}{1-F(y)} < c(i/n) \Big\{  1 + \frac{\tau}{v} A \Big( \frac{1}{1-F(y)} \Big) \Big\}. \] Set $y_n = F^{-1}(1-x/\bs)$ and $w_n = \frac{\tau}{v}A\big(\frac{1}{1-F(y_n)}\big) = \frac{\tau}{v} A \big(\frac{\bs}{x}\big)$. Thus, for $n$ large enough (such that $y_n > y_0$) we have for all $1 \leq i \leq n$,
	\[ \Big\{ Z_i \geq \frac{\bs}{c(i/n)x} (1-w_n)^{-1} \Big\} \subseteq \Big\{ Z_i \geq \frac{\bs}{x} \frac{1-F(y_n)}{1-F_{n,i}(y_n)} \Big\} \subseteq \Big\{ Z_i \geq \frac{\bs}{c(i/n)x} (1+w_n)^{-1} \Big\}. \]
	Since 
	\[ A_n = \sum_{i \in I'_{1+\lfloor \xi k'\rfloor}} \I \bigg( Z_i > \frac{\bs}{x} \frac{1-F(y_n)}{1-F_{n,i}(y_n)} \bigg), \] 
	this implies $B_n^- \leq A_n \leq B_n^+$, where
	\[ B_n^{\pm} = \sum_{i \in I'_{1+\lfloor \xi k'\rfloor}} \I \Big( Z_i > \frac{\bs}{c(i/n) x} (1 \pm w_n)^{-1} \Big). \]
	Next, we have
	\begin{align*}
	\Exp[|B_n^{\pm}-B_n|] 
	& \leq \sum_{i \in I'_{1+\lfloor \xi k'\rfloor}} \Exp \Big[ \Big| \I\Big( Z_i > \frac{\bs}{c(i/n)x} (1\pm w_n)^{-1} \Big) - \I\Big( Z_i > \frac{\bs}{c(i/n)x} \Big) \Big| \Big] \\
	& \leq \sum_{i \in I'_{1+\lfloor \xi k'\rfloor}} \Big\{ \Prob \Big( \frac{\bs}{c(i/n)x} (1\pm w_n)^{-1} < Z_i \leq \frac{\bs}{c(i/n)x} \Big) \\
	& \hspace{2.5cm} + \Prob \Big( \frac{\bs}{c(i/n)x} < Z_i \leq \frac{\bs}{c(i/n)x} (1\pm w_n)^{-1} \Big) \Big\}.
	\end{align*} 
	Let us consider the case with the plus-sign. Note that $w_n>0$. Recalling that $Z_i$ is Pareto-distributed the above expression reduces to 
	\begin{align*}
	w_n \frac{x}{\bs} \sum_{i \in I'_{1+\lfloor \xi k'\rfloor}} c(i/n) \le w_n x \|c\|_\infty,
	\end{align*}
	which converges to 0 since $w_n \to 0$ by Condition \ref{cond:2nd order}.
	The case with the minus-sign can be treated analogously. Hence, we have shown $B_n^{\pm} -B_n \to 0$ in $L_1(\Prob)$ as $ n \to \infty$. The assertion follows from $B_n^- \leq A_n \leq B_n^+$.

	For the second part of the lemma write 
	\begin{align*}
	\Exp [|B_n - C_n|] & \leq \sum_{i \in I'_{1+\lfloor \xi k'\rfloor}} \Exp \Big[ \Big| \I\Big( Z_i > \frac{\bs}{c(i/n)x} \Big) - \I\Big( Z_i > \frac{\bs}{c(\xi)x} \Big) \Big| \Big] \\
	& \leq \sum_{i \in I'_{1+\lfloor \xi k'\rfloor}} \Big\{ \Prob \Big( \frac{\bs}{c(i/n)x} < Z_i \leq \frac{\bs}{c(\xi)x} \Big) + \Prob \Big( \frac{\bs}{c(\xi)x} < Z_i \leq \frac{\bs}{c(i/n)x} \Big) \Big\} \\
	& = \frac{x}{\bs} \sum_{i \in I'_{1+\lfloor \xi k'\rfloor}} |c(i/n) - c(\xi)|,
	\end{align*}
	where the last equation is due to the fact that $Z_i$ is Pareto-distributed. Further, by Condition \ref{cond:sbias}, we have 
\begin{align*}
\frac{1}{\bs} \sum_{i \in I'_{1+\lfloor \xi k'\rfloor}} |c(i/n)-c(\xi)| 
&\le
\frac{K_c}{\bs} \sum_{i \in I'_{1+\lfloor \xi k'\rfloor}} |i/n-\xi|^{1/2} \\ 
&= 
\frac{K_c}{\bs} \sum_{j=1}^{\bs} \Big|\frac{\lfloor\xi k' \rfloor r + j}{n} - \xi \Big|^{1/2} \nonumber \\
&=
\frac{K_c}{\bs n^{1/2}} \sum_{j=1}^{\bs} \Big| (\lfloor\xi k'\rfloor - \xi k') \bs + j  \Big|^{1/2} 
\le \nonumber 
\frac{\sqrt 2K_c(\bs)^{1/2}}{n^{1/2}} = o(1)
\end{align*}
by Condition \ref{cond:blocksize}. Therefore, $B_n -C_n \to 0$ in $L_1(\Prob)$ as $n \to \infty$, which implies the second assertion.
\end{proof}

\begin{corollary} \label{cor1}
	Fix $\xi \in [0,1)$ and $x>0$. Under Condition \ref{cond:basic}-\ref{cond:sbias}, \ref{cond:2nd order} and \ref{cond:blocksize},
	\begin{align*}
	\Prob \Big( Z_i \leq \frac{\bs}{c(i/n) x} \ \textrm{ for all } \ i \in I'_{1+\lfloor \xi k'\rfloor} \Big) - \Prob \Big( Z_i \leq \frac{\bs}{c(\xi)x} \ \textrm{ for all } \ i \in I'_{1+\lfloor \xi k'\rfloor} \Big) = o(1),
	\end{align*}
	and
	\begin{multline*}
	 \Prob \Big( Z_i \leq \frac{1}{1-F_{n,i}} \big( F^{-1}(1-x/\bs) \big) \ \textrm{ for all } \ i \in I'_{1+\lfloor \xi k'\rfloor} \Big) \\
	  - \Prob \Big( Z_i \leq \frac{\bs}{c(i/n)x} \ \textrm{ for all } \ i \in I'_{1+\lfloor \xi k'\rfloor} \Big) = o(1), \quad n \to \infty.
	\end{multline*}
\end{corollary}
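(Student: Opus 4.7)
The plan is to reduce both assertions directly to Lemma \ref{Bn-Cn}. The three events appearing in the corollary coincide with $\{A_n=0\}$, $\{B_n=0\}$, and $\{C_n=0\}$, with $A_n,B_n,C_n$ as defined in that lemma: each of these quantities is a sum of indicator variables over $i\in I'_{1+\lfloor \xi k'\rfloor}$, and the event that $Z_i$ is bounded by the respective threshold for every such $i$ is precisely the event that the sum vanishes. Thus the first difference in Corollary \ref{cor1} equals $\Prob(B_n=0)-\Prob(C_n=0)$ and the second equals $\Prob(A_n=0)-\Prob(B_n=0)$.

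The key step is to lift the in-probability statements $A_n=B_n+\op$ and $B_n=C_n+\op$ supplied by Lemma \ref{Bn-Cn} to convergences of these marginal probabilities. Since $A_n,B_n,C_n$ are nonnegative integer-valued (each is bounded by the block length $\bs$), their pairwise differences are integer-valued, so
\[
\Prob(A_n\ne B_n) = \Prob\bigl(|A_n-B_n|\ge \tfrac12\bigr) = o(1), \qquad \Prob(B_n\ne C_n) = o(1).
\]
Consequently, $\bigl|\Prob(A_n=0)-\Prob(B_n=0)\bigr| \le \Prob(A_n\ne B_n) \to 0$, and the analogous bound for the pair $(B_n,C_n)$ delivers both claims.

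No genuine obstacle remains: the substantive content -- the uniform second-order control of $(1-F_{n,i})/(1-F)$ around $c(i/n)$ from Condition \ref{cond:2nd order} (used for the $A_n$ versus $B_n$ comparison), and the Hölder-regularity of $c$ from Condition \ref{cond:sbias} combined with the block-length bound $\bs=o(\sqrt n)$ from Condition \ref{cond:blocksize} (used for the $B_n$ versus $C_n$ comparison) -- is already absorbed into the proof of Lemma \ref{Bn-Cn}. What is left here is only the bookkeeping observation that in-probability closeness of integer-valued counts transfers to closeness of their probabilities of attaining the value zero.
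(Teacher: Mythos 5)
Your proposal is correct and follows essentially the same route as the paper: identify the three events with $\{A_n=0\}$, $\{B_n=0\}$, $\{C_n=0\}$ from Lemma \ref{Bn-Cn}, and use the integer-valuedness of the counts to convert the $\op$ statements into $\Prob(A_n\neq B_n)=o(1)$ and $\Prob(B_n\neq C_n)=o(1)$, which dominate the differences of the probabilities. The paper phrases the final bound as $|\Prob(A_n=0)-\Prob(B_n=0)|\le 2\Prob(|A_n-B_n|>1/2)$, which is the same bookkeeping step you carry out.
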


\begin{proof}
Note that
\begin{align*}
\Prob \Big( Z_i \leq \frac{1}{1-F_{n,i}} \big( F^{-1}(1-x/\bs) \big) \ \textrm{ for all } \ i \in I'_{1+\lfloor \xi k'\rfloor} \Big) &= \Prob(A_n=0), \\
\Prob \Big( Z_i \leq \frac{\bs}{c(i/n) x} \ \textrm{ for all } \ i \in I'_{1+\lfloor \xi k'\rfloor} \Big) &=\Prob(B_n=0), \\
\Prob \Big( Z_i \leq \frac{\bs}{c(\xi)x} \ \textrm{ for all } \ i \in I'_{1+\lfloor \xi k'\rfloor} \Big)  &= \Prob(C_n=0).
\end{align*}
Therefore,
\begin{align*}
|P(A_n=0)-P(B_n=0)| 
&=
P(A_n=0, B_n>0) + P(A_n>0, B_n=0)  \\
& \le 2 P(|A_n - B_n|>1/2) = o(1)
\end{align*}
by Lemma \ref{Bn-Cn}. And $|P(B_n=0)-P(C_n=0)|=o(1)$ can be shown analogously.
\end{proof}



\end{document}